\newtheorem{teo}{Theorem}[section]
\newtheorem{lm}[teo]{Lemma}
\newtheorem{coro}[teo]{Corollary}
\newtheorem{prop}[teo]{Proposition}
\newtheorem*{main}{Main Theorem}
\theoremstyle{definition}
\newtheorem{definition}[teo]{Definition}
\newtheorem{exa}[teo]{Example}
\newtheorem{rem}[teo]{Remark}
\newtheorem*{ack}{Acknowledgments}
\numberwithin{equation}{section}
\title[Extremals for Poincar\'e-Sobolev constants]{Extremals for Poincar\'e-Sobolev sharp constants\\ in Steiner symmetric sets}
\date{\today}
\subjclass[2010]{35P30, 35B45, 46E35}
\keywords{Poincar\'e-Sobolev inequality, inradius, Lane-Emden equation, decay estimates.}
\author[Brasco]{Lorenzo Brasco}
\address[L.\ Brasco]{Dipartimento di Matematica e Informatica
	\newline\indent
	Universit\`a degli Studi di Ferrara
	\newline\indent
	Via Machiavelli 35, 44121 Ferrara, Italy}
\email{lorenzo.brasco@unife.it}
\author[Briani]{Luca Briani}
\address[L.\ Briani]{School of Computation, Information and Technology,		\newline\indent
 	Technical University of Munich
 	\newline\indent
	Boltzmannstra\ss e 3, 85748 Garching bei M\"unchen, Germany}
\email{luca.briani@tum.de}
\author[Prinari]{Francesca Prinari}
\address[F. Prinari]{Dipartimento di Scienze Agrarie, Alimentari e Agro-ambientali
\newline\indent 
Universit\`a di Pisa
\newline\indent
Via del Borghetto 80, 56124 Pisa, Italy}
\email{francesca.prinari@unipi.it}
\begin{document}

\begin{abstract}
We prove existence of minimizers for the sharp Poincar\'e-Sobolev constant in general Steiner symmetric sets, in the subcritical and superhomogeneous regime. The sets considered are not necessarily bounded, thus the relevant embeddings may suffer from a lack of compactness. We prove existence by means of an elementary compactness method. We also prove an exponential decay at infinity for minimizers, showing that in the case of Steiner symmetric sets the relevant estimates only depend on the underlying geometry. Finally, we illustrate the optimality of the existence result, by means of some examples. 
\end{abstract}

\maketitle

\begin{center}
\begin{minipage}{10cm}
\small
\tableofcontents
\end{minipage}
\end{center}

\section{Introduction}

\subsection{A starting point: the two-dimensional strip}
Motivated by obtaining existence of solitary waves for a two-dimensional, inviscid and incompressible flow, 
in \cite[Theorem 7.5 \& Corollary 7.6]{AmTo}, Amick and Toland proved the following remarkable result: for every $q>2$ and every $\lambda>0$ the following boundary value problem for the {\it Lane-Emden equation}
\begin{equation}
\label{LEintro}
\left\{\begin{array}{rcll}
-\Delta u&=&\lambda\, |u|^{q-2}\,u,& \text{in}\ S:=\mathbb{R}\times(-1,1),\\
u&=&0,& \text{on}\ \partial S
\end{array}
\right.
\end{equation}
does admit a {\it positive} solution with finite energy, i.e. such that $u\in W^{1,2}_0(S)$. Concurrently with them, Bona, Bose and Turner obtained the same kind of result in \cite[Theorem 5.1]{BBT}: we refer the reader to their paper \cite{BBT} for a thorough explanation of the physical model leading to this problem\footnote{It is fair to recall that the equation considered in \cite{AmTo, BBT} is slightly more general, since they admit a broader class of nonlinear right-hand sides: for the scopes of this paper, we will confine ourselves to the case of the simple equation above.}.
\par
From the variational point of view, this result may appear a little bit surprising: 
\begin{itemize}
\item at first, because the embedding $W^{1,2}_0(S)\hookrightarrow L^q(S)$ is only continuous, but not compact, due to the translation invariance of $S$ in the direction $\mathbf{e}_1=(1,0)$. This lack of compactness prevents the application of standard methods to prove existence in Sobolev spaces. The two main ones would be: either to look for a critical point of the functional
\[
\mathfrak{F}_{q,\lambda}(\varphi):=\frac{1}{2}\,\int_S |\nabla \varphi|^2\,dx-\frac{\lambda}{q}\,\int_S |\varphi|^q\,dx,
\]
by using the Mountain Pass Theorem, for example (see \cite[Chapter II, Theorem 6.2]{St});
or to look for a global minimizer of the following constrained problem
\[
\lambda_{2,q}(S):=\inf_{\varphi\in W^{1,2}_0(S)} \left\{\int_S |\nabla \varphi|^2\,dx\, :\, \int_S |\varphi|^q\,dx=1\right\}.
\]
Indeed, observe that a minimizer $v$ of this problem (which can be taken to be positive, thanks to the fact that both the functional and the constraint are even), would solve
\[
-\Delta v=\lambda_{2,q}(S)\,v^{q-1},\qquad \text{in}\ S.
\]
Then, by taking the scaled function $u=(\lambda/\lambda_{2,q}(S))^{1/(2-q)}\,v$, we would get a positive solution of \eqref{LEintro}. This particular solution of the Lane-Emden equation is called {\it with minimal energy}, since it minimizes the free energy functional $\mathfrak{F}_{q,\lambda}$ among all nontrivial solutions (see for example the proof of \cite[Corollary 1.4]{BraLin});
\vskip.2cm
\item secondly, this existence result is {\it false} for $1<q\le 2$. Indeed, in the case $1<q<2$ we know that every positive solution $u\in W^{1,2}_0(S)$ to \eqref{LEintro} would automatically be a minimizer of the functional $\mathfrak{F}_{q,\lambda}$ above (see for example \cite[Proposition 3.5 \& Remark 3.6]{BPZ}). However, it is not difficult to see that this functional is unbounded from below, due to the scale invariance of $S$ along the $\mathbf{e}_1$ direction. Indeed, for every non-trivial $\varphi\in W^{1,2}_0(S)$ and every $\ell>0$, by defining $\varphi_\ell(x_1,x_2)=\varphi(\ell\,x_1,x_2)$, after a change of variable we would get
\[
\lim_{\ell\to 0^+}\mathfrak{F}_{q,\lambda}(\varphi_\ell)=\lim_{\ell\to 0^+}\left[\frac{\ell}{2}\,\int_S |\nabla \varphi|^2\,dx-\frac{\lambda}{q}\,\frac{1}{\ell}\,\int_S |\varphi|^q\,dx\right]=-\infty.
\]
The borderline case $q=2$ is quite peculiar, since \eqref{LEintro} boils down to the eigenvalue equation. In this case, the only possibility to have existence of a positive solution $u$ (which thus would be a positive eigenfunction) would be that $\lambda$ coincides with the first eigenvalue of the Dirichlet-Laplacian on $S$, i.e.
\[
\lambda=\inf_{\varphi\in W^{1,2}_0(S)} \left\{\int_S |\nabla \varphi|^2\,dx\, :\, \int_S |\varphi|^2\,dx=1\right\},
\]
with $u$ minimizer of this problem. However, it is well-known that on the strip $S$ this problem does not admit a solution. 
\end{itemize}
In light of the previous observations, at that time the existence result of \cite{AmTo} and \cite{BBT} greatly stimulated the interest towards related results for semilinear or quasilinear PDEs in unbounded domains, where standard compactness methods may fail. Without any attempt of completeness, it is mandatory to recall the papers by Esteban and Lions \cite{Es, EL2, EL}, appeared at the same time as \cite{AmTo, BBT} (see also \cite[Section V.2]{Lio}). There, the authors generalized this existence result by considering more general cylindrical sets of the form $\Omega=\omega\times\mathbb{R}^k\subseteq \mathbb{R}^N$, with $\omega\subseteq \mathbb{R}^{N-k}$ open bounded set and $1\le k\le N-1$. We also refer to \cite{DelF} and \cite{KST} for related existence results for the same equation in some unbounded sets, under suitable geometric structural assumptions.

\subsection{Main results}
It is now time to present the main scopes and outcomes of our paper. We need at first to settle some notation.
For $1<p<N$, we define the Sobolev critical exponent
\[
p^*=\frac{N\,p}{N-p},
\]
as it is customary. Then, for every exponent $q>p$ such that
\begin{equation}\label{pq} 
q \left\{\begin{array}{ll}
< p^*,& \mbox{ if } p<N,\\
<\infty, & \mbox{ if } p=N,\\
\le \infty,& \mbox{ if } p> N,
\end{array}
\right.
\end{equation}
we define 
\begin{equation}\label{lapq}
\lambda_{p,q}(\Omega)=\inf_{\varphi\in C^\infty_0(\Omega)} \left\{\int_\Omega |\nabla\varphi|^p\,dx\, :\, \|\varphi\|_{L^q(\Omega)}=1\right\}.
\end{equation}
Observe that this coincides with the sharp constant for the embedding
\begin{equation}
\label{embeddo}
\mathscr{D}^{1,p}_0(\Omega)\hookrightarrow L^q(\Omega),
\end{equation}
where $\mathscr{D}^{1,p}_0(\Omega)$ is the {\it homogeneous Sobolev space}, given by the completion of $C^\infty_0(\Omega)$ with respect to the norm
\[
C^\infty_0(\Omega)\ni\varphi\mapsto\|\nabla\varphi\|_{L^p(\Omega)}.
\]
By keeping in mind the existence result on the two-dimensional strip, in this paper we will focus on proving existence of minimizers for \eqref{lapq}
for sets which could be ``very large'' (so that the relevant Sobolev embedding may fail to be compact), but having some geometric properties which permit to retrieve a suitable form of compactness, at least for minimizing sequences. In turn, this will show existence of solutions with minimal energy to the following quasilinear Lane-Emden equation
\begin{equation}
\label{LEgenintro}
-\Delta_p u=\lambda\,u^{q-1},\qquad \text{in}\ \Omega,
\end{equation}
with the power $q$ being {\it superhomogeneous} (i.e. $q>p$) and {\it subcritical} (in the sense of Sobolev embeddings). For the case $p=2$, apart for the physical motivation described in \cite{BBT}, we recall that positive solutions of \eqref{LEgenintro} describe the extinction profiles of positive solutions to the so-called {\it fast diffusion equation} with Dirichlet boundary conditions (see the pioneering result \cite[Theorem 2]{BH} or the thorough discussion of \cite[Section 9]{BonFig}). 
\par
We will also be interested in deriving some properties of minimizers for \eqref{lapq} (hereafter also called {\it extremals}), notably their behaviour at infinity.
\vskip.2cm\noindent
More precisely, we will work with open {\it Steiner symmetric sets} $\Omega\subseteq\mathbb{R}^N$, i.e. sets 
such that:
\begin{itemize}
\item[(S1)] $\Omega$ is symmetric with respect to each hyperplane $\{x\, :\,\langle x,\mathbf{e}_i\rangle =0\}$;
\vskip.2cm
\item[(S2)] $\Omega$ is convex in every direction $\mathbf{e}_i$.
\end{itemize}
Here  $\{\mathbf{e}_1,\dots,\mathbf{e}_N\}$ stands for the canonical orthonormal basis of $\mathbb{R}^N$, as usual.
We will show that, for these sets, the existence result of \cite{AmTo, BBT} still holds, for every $1<p<\infty$ and every $q$ as above. Observe that our sets do not necessarily have a cylindrical structure. 
In this respect, the existence part of this paper only partially superposes with the aforementioned results by Esteban and Lions, for example. It should be also pointed out that our methods of proof will be {\it elementary} (for example, we will not appeal to any {\it concentration-compactness method}), as we will explain in the next subsections.
Our paper can find \cite{BBT} as its true ancestor, from the mathematical point of view.
\vskip.2cm\noindent
We summarize below the main findings of the present paper: this will result by collecting together Theorems \ref{teo:main}, \ref{teo:maininfty} and \ref{teo:decaysteiner} below.
\begin{main}
Let $1<p<\infty$ and assume that  $q>p$ satisfies \eqref{pq}.
Let $\Omega\subseteq\mathbb{R}^N$ be an open Steiner symmetric set such that  $\Omega\neq\mathbb R^N$. Then, the infimum defining $\lambda_{p,q}(\Omega)$ is attained by some Steiner symmetric function  $u\in W^{1,p}_0(\Omega)\setminus\{0\}$. Moreover, every Steiner symmetric extremal satisfies the following exponential decay estimate
\[
|u(x)|\le C\,e^{-a|x|}, \qquad\text{for}\ x\in\mathbb{R}^N,
\]
with $a=a(p,q,\lambda_{p,p}(\Omega))>0$ and $C=C(N,p,q,\Omega)>0$.
\end{main}
\vskip.2cm\noindent
Observe that, under the standing assumptions of the Main Theorem, we admit unbounded open sets which could be quite ``large'', as already anticipated. For example, these sets could be unbounded in every direction and have infinite volume.
Thus, in principle, it is not even clear that $\lambda_{p,q}(\Omega)>0$, i.e. that the continuous embedding \eqref{embeddo} holds. Actually, this is the case, as we show in Section \ref{sec:3}: more precisely, we prove at first that each open Steiner symmetric set $\Omega\not=\mathbb{R}^N$ has finite {\it inradius}. The latter is the quantity defined by
\[
r_\Omega=\sup\Big\{r>0\,:\, \text{there exists a ball}\ B_r(x)\subseteq\Omega\Big\}.
\]
It is well-known that the condition $r_\Omega<+\infty$ is only a necessary one, for the embedding \eqref{embeddo} to hold. We show that for a Steiner symmetric set {\it this is sufficient}, as well, through the following quantitative geometric lower bound
\begin{equation}\label{lowerboundintro}
\lambda_{p,q}(\Omega)\ge \frac{1}{C_{N,p,q}}\,\left(\frac{1}{r_\Omega}\right)^{N-p-\frac{p}{q}\,N},
\end{equation} 
which is interesting in itself.
\vskip.2cm\noindent
Before proceeding further with some comments on the proof of the Main Theorem, a list of remarks are in order.
\begin{rem}[The case $q\le p$]
For $q\le p$, the assumptions on $\Omega$ of the Main Theorem are not enough to ensure that $\lambda_{p,q}(\Omega)$ is attained. As a counter-example, we can take for simplicity $p=N=2$ and consider again the strip
\[
S=\mathbb{R}\times(-1,1).
\]
Then we recall that:
\begin{enumerate}
\item for $q<2$, in general we have that the embedding $\mathcal{D}^{1,2}_0(\Omega)\hookrightarrow L^q(\Omega)$ is continuous if and only if this is compact (see \cite[Theorem 15.6.2]{Maz}). Since $\mathcal{D}^{1,2}_0(S)\hookrightarrow L^q(S)$ can not be compact (due to the translation invariance of the set along the direction $\mathbf{e}_1$), we must have $\lambda_{2,q}(S)=0$;
\vskip.2cm
\item for $p=q=2$, we have that $\lambda_{2,2}(S)$ coincides with the bottom of the spectrum of the Dirichlet-Laplacian in $S$ and we have already observed that this is not attained.
\end{enumerate}
\end{rem}

\begin{rem}[Breaking Steiner symmetry]
Our assumptions (S1) and (S2) on the open sets are, in a sense, necessary, whenever the embedding $\mathcal{D}^{1,p}_0(\Omega)\hookrightarrow L^q(\Omega)$ fails to be compact. We will show for example that there exist:
\begin{itemize}
\item {\it convex} sets, having only $N-1$ orthogonal hyperplanes of symmetry;
\vskip.2cm
\item sets with $N$ orthogonal hyperplanes of symmetry, but not convex along one of the coordinate directions;
\end{itemize}
such that $\lambda_{p,q}$ does not admit an extremal. We will also highlight that our existence result is quite unstable, with respect to small (smooth) deformations of the boundary: for example, while the strip $S=\mathbb{R}\times(-1,1)$ admits an extremal, by making a small symmetric {\it inward} deformation we get a new open set, having two orthogonal axis of symmetry, such that $\lambda_{p,q}$ is not attained.
\end{rem}

\begin{rem}[Uniqueness]
This is an interesting problem, which however we do not treat in this paper.
It is well-known that the uniqueness of positive solutions to the Lane-Emden \eqref{LEgenintro} is quite a delicate matter, in the case $q>p$. Even for solutions with minimal energy, i.e. positive extremals of $\lambda_{p,q}(\Omega)$, the known uniqueness results are very few. 
For example, we refer the interested reader to \cite[Theorem 1.1]{BraLin}, \cite[Theorem 5]{Da88}, \cite[Theorem 4.1]{DGP}, \cite[Theorem 1.1 \& Proposition 6.3]{HL} and \cite[Lemma 3 \& Theorem 1]{Lin},  for a positive answer in some particular cases, when $\Omega$ is {\it bounded}. However, extending these partial results to the case of unbounded sets as those considered in the Main Theorem does not seem to be straightforward. In the special case of the strip $S=\mathbb{R}\times(-1,1)$ and for $p=2$, uniqueness up to translation for positive solutions to \eqref{LEintro} seems to belong to the folklore on the subject: we refer to \cite[page 665]{Dan89}, which contains a very brief sketch of the proof.
\par
Finally, we recall that, in general, uniqueness fails already for some bounded sets, see for example \cite[Section 4.2]{BraFra}. 
\end{rem}

\subsection{Some comments on the proofs}

For the existence part, we will use the Direct Method in the Calculus of Variations. Here, we first point out that we do not prove that {\it every} minimizing sequence is sequentially precompact in $L^q(\Omega)$. We will rather construct a particular minimizing sequence which has this compactness property: this will be accomplished by considering the minimizers $u_n$ of a suitably ``perturbed'' problem, depending on a natural index $n\in\mathbb{N}$. 
\par
Such a problem consists in adding a ``vanishing confinement'' term, i.e. we consider  
\begin{equation}
\label{confino}
\inf_{u\in W^{1,p}_0(\Omega)} \left\{\int_\Omega |\nabla u|^p\,dx+\frac{1}{n+1}\,\int_\Omega V\,|u|^p\,dx\, :\, \|u\|_{L^q(\Omega)}=1\right\}.
\end{equation}
The precise form of the positive potential $V$ is not very important, but it must enjoy the following crucial properties:
\begin{itemize}
\item[{\it (V1)}] it has to be a {\it confining potential}, i.e. for every $t>0$ the sublevel set
$\{x\in\mathbb{R}^N\,:\, V(x)\le t\}$,
is bounded;
\vskip.2cm
\item[{\it (V2)}] for every non-negative function $u\in L^p(\Omega)$, we have 
\[
\int_\Omega V\,u^p\,dx\ge\int_{\Omega}V\,(\mathcal{S}_{\mathbf{e}_i}(u))^p\,dx,\qquad \text{for every}\ i\in\{1,\dots,N\}.
\]
where $\mathcal{S}_{\mathbf{e}_i}(u)$ stands for the {\it Steiner symmetrization of $u$ with respect to the direction $\mathbf{e}_i$}, see Section \ref{sec:3} below.
\end{itemize}
For example, it turns out that the simple choice $V(x)=|x|$ will do the job. The property {\it (V1)} guarantees that a minimizer $u_n\in W^{1,p}_0(\Omega)$ for \eqref{confino} actually exists; moreover, property {\it (V2)} assures that it can be taken to be Steiner symmetric, thanks to the so-called {\it P\'olya-Szeg\H{o} principle} for the Steiner symmetrization (see \cite{CF} or Section \ref{sec:3} below).
\par
In order to prove that $\{u_n\}_{n\in\mathbb{N}}$ is sequentially precompact in $L^q(\Omega)$ and get existence of an extremal for our original problem, the first crucial step is excluding that $\{u_n\}_{n\in\mathbb{N}}$ weakly converges to the null function. To this aim, we use the equation solved by $u_n$ and rely on a uniform (with respect to $n$) Harnack's inequality. This fact and the Steiner symmetry of each $u_n$ permit to conclude that $u_n$ is uniformly bounded from below on a fixed ball and thus to exclude the vanishing of the weak limit. 
\par
We then get the desired strong convergence by relying on the same idea as in the proof of \cite[Lemma 4.2]{CFL}, which deals with existence of an extremal for the sharp constant in a Gagliardo-Nirenberg-Sobolev inequality on the whole space. Here, we exploit the classical {\it Br\'ezis-Lieb Lemma} for the sequence $\{(u_n,\nabla u_n)\}_{n\in\mathbb{N}}$: as a technical point, we recall that in order to apply the Br\'ezis-Lieb Lemma, the weak convergence is not enough when $p\not=2$ (see \cite[Remarks, point (iii)]{BreLie}). Thus, we have to preliminary show that $\{(u_n,\nabla u_n)\}_{n\in\mathbb{N}}$ converges pointwise almost everywhere. For the functions themselves this is quite easy; for their gradients we can use a monotonicity trick, based on the ellipticity of the equation solved by each $u_n$. The last fact marks a difference with the case considered in \cite{CFL}, where $p=2$ and one can be dispensed with this additional pointwise convergence. As a byproduct of the proof, we actually get that $\{u_n\}_{n\in\mathbb{N}}$ converges (up to a subsequence) strongly in $W^{1,p}_0(\Omega)$.
\par
All these tools are quite robust and do not rely on higher regularity estimates (differently from the proofs of \cite{AmTo} and \cite{BBT}, for example). Thus, we can easily treat the case of a generic exponent $1<p<\infty$.
\vskip.2cm\noindent
We now want to make some comments on the decay estimate for extremals. We first remark that this is a consequence of a more general result, proved in Section \ref{sec:7}: this concerns non-negative weak {\it subsolutions} $u\in W^{1,p}_0(\Omega)$ of the Lane-Emden equation, i.e. 
\[
-\Delta_p u\le \lambda\,u^{q-1},\qquad \text{in}\ \Omega,
\]
in a general open set $\Omega\subseteq\mathbb{R}^N$, not necessarily Steiner symmetric.
\par
The result is obtained by using iterative methods {\it \`a la} De Giorgi-Moser. These are nonlinear methods in nature, thus here as well we can treat the case of a general exponent $1<p<\infty$. We point out in particular that we do not rely on ODE methods (like for example in the classical result \cite[Theorem 1]{BL}) or on the construction of suitable barrier functions (see for example \cite[Proposition 1]{CCW}).
The first step of the proof is to get an $L^\infty-L^q$ a priori estimate ``localized at infinity'', i.e. an estimate of the type
\[
\|u\|_{L^\infty(\Omega\setminus B_{\varrho+1})}\lesssim \|u\|_{L^q(\Omega\setminus B_{\varrho})},\qquad \mbox{for every}\ \varrho>0.
\] 
By combining this estimate with the fact that $u\in L^q(\Omega)$, we get that $u$ is arbitrarily small (in the $\sup$ norm) in the complement of a sufficiently large ball. More precisely, 
for every $\varepsilon>0$ there exists $R_\varepsilon>0$ such that 
\begin{equation}
\label{start}
\|u\|_{L^\infty(\Omega\setminus B_{R_\varepsilon})}<\varepsilon.
\end{equation}
Thanks to the fact that $q>p$, this in turn entails that
\[
-\Delta_p u\le \lambda\,\varepsilon^{q-p}\,u^{p-1},\qquad \text{in}\ \Omega\setminus B_{R_\varepsilon}.
\]
In other words, we get that $u$ is a subsolution ``at infinity'' of the eigenvalue equation for the $p-$Laplacian, with an eigenvalue which can be taken as small as we wish. The exponential decay now easily follows from this fact, provided we assume that $\Omega$ supports the Poincar\'e inequality
\[
c_\Omega\,\int_\Omega |\varphi|^p\,dx\le \int_\Omega |\nabla\varphi|^p\,dx,\qquad \text{for every}\ \varphi\in C^\infty_0(\Omega).
\]
As the reader may imagine, there is a subtle point here: in order to be a subsolution with a small eigenvalue, we need to choose a sufficiently large radius $R_\varepsilon$. This choice heavily depends on the (sub)solution $u$ itself and it affects the constant $C$ appearing in the final decay estimate. Thus, in general the resulting a priori estimate is a bit implicit.
\par
We will show in Theorem \ref{teo:decaysteiner} that, for Steiner symmetric sets and Steiner symmetric extremals, such an estimate only depends on the open set $\Omega$, i.e. the radius $R_\varepsilon$ at which the extremal ``starts to decay'' may be taken to depend {\it only on the underlying geometry} of $\Omega$, not on the extremal itself. More precisely, it depends on how the orthogonal sections of $\Omega$ ``shrink'' or ``persist'' at infinity (we refer the reader to Section \ref{sec:7} for a more precise discussion).
\par
The difficult point is to handle the case when $\Omega$ contains some unbounded tubes of fixed width (the prototypical case being the two-dimensional strip). In this case, we can get the claimed uniform result by carefully adapting an argument from \cite{BBT}. An explicit local Lipschitz estimate for extremals will be the key to get the result, by avoiding the stronger regularity used in \cite{BBT} for the case of the Laplacian (this could not be available in the case $p\not=2$ and for a general Steiner symmetric set).

\subsection{Plan of the paper}
In Sections \ref{sec:2} and \ref{sec:3} we introduce some notation and discuss some preliminary results, which will be used throughout the paper.  In particular, in Proposition \ref{prop:positivity}  we show the lower bound \eqref{lowerboundintro}.
In Section \ref{sec:4} we consider the minimizing sequence $\{u_n\}_{n\in\mathbb{N}}$ for the problem \eqref{lapq}, obtained by solving \eqref{confino}; we also discuss its main features. 
Then, Section \ref{sec:5} is devoted to the existence of extremals, by distinguishing the cases $q<\infty$ (see Theorem \ref{teo:main}) and $q=\infty$ (see Theorem \ref{teo:maininfty}). 
The optimality of the assumptions of these results are discussed  in Section \ref{sec:6}, by means of simple counterexamples. 
At last, in Section \ref{sec:7} we prove the decay properties of extremals: first, we show an estimate holding for every non-negative subsolution to the Lane-Emden equation, see Theorem \ref{teo:decay}; then, as discussed above, we refine such an estimate in the case of extremals of Steiner symmetric sets, see Theorem \ref{teo:decaysteiner}. 
\par
For completeness, in Appendix \ref{sec: app} we present some technical results concerning the case $q=\infty$, which are needed in the paper. 

\begin{ack}
The first author wish to thank Ryan Hynd, Simon Larson and Erik Lindgren for some useful discussions on the subject of this paper, during the thematic semester ``Geometric Aspects of Nonlinear Partial
Differential Equations'', held at the Institute Mittag-Leffler during Fall 2022. 
\par
Part of this paper has been written during the workshop ``CalVaFer -- Calculus of Variations in Ferrara'', held in Ferrara in March 2024, as well as during the ``XXXIV Convegno Nazionale di Calcolo delle Variazioni'', held in Riccione in February 2025.
\par
The second and third authors are members of the {\it Gruppo Nazionale per l'Analisi Matematica, la Probabilit\`a
e le loro Applicazioni} (GNAMPA) of the Istituto Nazionale di Alta Matematica (INdAM) whose support, through the project GNAMPA 2024  ``Propriet\`a geometriche e questioni di regolarit\`a in problemi variazionali locali e non local'' ({\tt CUP$\_$E53C23001670001}),  is gratefully acknowledged.  
\par
The research of L.~Briani was supported by the DFG through the Emmy Noether Programme (project number 509436910).  
\end{ack}

\section{Preliminaries}
\label{sec:2}

\subsection{Notation}
 For $r>0$ and $x_0\in\mathbb{R}^N$ we denote by  $B_r(x_0)$ the $N-$dimensional open ball centered at $x_0$, with radius $r>0$. 
When $x_0=0$, we will simply write $B_r$. We also denote $\omega_N=|B_1|$.
Occasionally, we will also need the notation
\[
Q_r(x_0)=\prod_{i=1}^N (x_{0,i}-r, x_{0,i}+r ) \qquad \hbox{ where } x_0=(x_{0,1},\cdots, x_{0,N}),
\]
for open $N-$dimensional hypercubes. As in the case of balls, we will simply write $Q_r$ when the center coincides with the origin.
\par
We define the {\it inradius} of an open set $\Omega\subseteq\mathbb{R}^N$ as
\[
r_\Omega=\sup\Big\{r>0\,:\, \text{there exists a ball}\ B_r(x)\subseteq\Omega\Big\}.
\]
For an open set $\Omega\subseteq\mathbb{R}^N$ and an exponent $1<p<\infty$, we will use the distinguished notation
\[
\lambda_{p}(\Omega)=\inf_{u\in C^\infty_0(\Omega)} \left\{\int_\Omega |\nabla u|^p\,dx\, :\, \|u\|_{L^p(\Omega)}=1\right\}.
\] 
This is a particular instance of \eqref{lapq}, corresponding to the choice $q=p$.
\par
We explicitly note that the infimum over $C^\infty_0(\Omega)$ in the definitions of both $\lambda_p(\Omega)$ and $\lambda_{p,q}(\Omega)$ can be equivalently performed on the whole $W^{1,p}_0(\Omega)$,
see for example \cite[Lemma 2.6]{BraPriZag2}. Here, by $W^{1,p}_0(\Omega)$ we intend the closure of $C^\infty_0(\Omega)$ in the standard Sobolev space $W^{1,p}(\Omega)$.

\subsection{Miscellaneous facts}
The following simple result expresses in a quantitative way the subadditivity of concave powers. It should be well-known, but we enclose the proof for the reader's convenience.
\begin{lm}
\label{lm:quantitativo}
Let $0<\alpha<1$, for every $A,B>0$ we have
\[
(A+B)^\alpha\le A^\alpha+B^\alpha-\frac{1-\alpha}{2^{\alpha+1}}\,\min\{A^\alpha,\,B^{\alpha}\}.
\]
\end{lm}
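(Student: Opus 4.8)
The plan is to reduce to a single-variable inequality by exploiting the homogeneity of both sides. Since the asserted inequality is homogeneous of degree $\alpha$ under the scaling $(A,B)\mapsto (tA,tB)$, and is symmetric in $A$ and $B$, I may assume without loss of generality that $B=1$ and $0<A\le 1$; then $\min\{A^\alpha,B^\alpha\}=A^\alpha$, and the claim becomes
\[
(1+A)^\alpha\le 1+A^\alpha-\frac{1-\alpha}{2^{\alpha+1}}\,A^\alpha,\qquad 0<A\le 1.
\]
Equivalently, setting $f(A):=1+A^\alpha-(1+A)^\alpha$, I must show $f(A)\ge \dfrac{1-\alpha}{2^{\alpha+1}}\,A^\alpha$ on $(0,1]$.

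The key step is to track the quantity $g(A):=f(A)/A^\alpha=1+A^{-\alpha}-\bigl((1+A)/A\bigr)^\alpha=1+A^{-\alpha}-(1+1/A)^\alpha$. Writing $s=1/A\ge 1$, this is $h(s):=1+s^\alpha-(1+s)^\alpha$ for $s\ge 1$, and I want $h(s)\ge \dfrac{1-\alpha}{2^{\alpha+1}}$ for all $s\ge 1$. First I would check the endpoint: $h(1)=2-2^\alpha$. Since $2-2^\alpha\ge \dfrac{1-\alpha}{2^{\alpha+1}}$ can be verified directly (e.g.\ $2^{\alpha+1}(2-2^\alpha)=2^{\alpha+2}-4^{\alpha+1}\ge 1-\alpha$ for $0<\alpha<1$, using convexity/monotonicity estimates on each side, or simply noting the left side is $\ge 2^{\alpha+2}-4$ which one compares to $1-\alpha\le 1$ after a crude bound — I will pin down the cleanest route in the write-up). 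Next, I would show $h$ is nondecreasing on $[1,\infty)$: compute $h'(s)=\alpha\bigl(s^{\alpha-1}-(1+s)^{\alpha-1}\bigr)$, which is $\ge 0$ because $\alpha-1<0$ and $s<1+s$ forces $s^{\alpha-1}>(1+s)^{\alpha-1}$. Hence $h(s)\ge h(1)=2-2^\alpha\ge \dfrac{1-\alpha}{2^{\alpha+1}}$ for all $s\ge 1$, which is exactly what is needed.

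The main obstacle I anticipate is the elementary but slightly fiddly numerical comparison $2-2^\alpha\ge \dfrac{1-\alpha}{2^{\alpha+1}}$ for $\alpha\in(0,1)$; one must pick a bound that is clean and works uniformly. A convenient path: the function $\alpha\mapsto 2^\alpha$ is convex, so $2^\alpha\le 1+\alpha$ on $[0,1]$, giving $2-2^\alpha\ge 1-\alpha$, while $\dfrac{1-\alpha}{2^{\alpha+1}}\le \dfrac{1-\alpha}{2}\le 1-\alpha$ since $2^{\alpha+1}\ge 2$. Therefore $2-2^\alpha\ge 1-\alpha\ge \dfrac{1-\alpha}{2^{\alpha+1}}$, closing the argument. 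Assembling the pieces: homogeneity and symmetry reduce to $B=1$, $A\le 1$; the substitution $s=1/A$ and monotonicity of $h$ reduce to the endpoint $s=1$; and the convexity bound $2^\alpha\le 1+\alpha$ disposes of the endpoint. Unwinding the normalization by scaling back by $\min\{A,B\}$ yields the stated inequality in full generality.
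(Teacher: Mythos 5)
Your proof is correct but takes a genuinely different route from the paper's. Both arguments use homogeneity of degree $\alpha$ and the symmetry in $A,B$ to reduce to the one-variable inequality $(1+t)^\alpha\le 1+t^\alpha-\frac{1-\alpha}{2^{\alpha+1}}\,t^\alpha$ for $t\in(0,1]$. At that point the paper introduces the \emph{ratio} $h(t)=\frac{1+t^\alpha}{(1+t)^\alpha}$, computes $h'(t)=\alpha\,\frac{t^{\alpha-1}-1}{(1+t)^{\alpha+1}}$, lower-bounds it by $\frac{\alpha}{2^{\alpha+1}}(t^{\alpha-1}-1)$, and integrates from $0$ to $t$ to obtain $h(t)\ge 1+\frac{1-\alpha}{2^{\alpha+1}}\,t^\alpha$. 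You instead divide by $t^\alpha$ and, after the substitution $s=1/t\ge 1$, study the \emph{difference} $h(s)=1+s^\alpha-(1+s)^\alpha$: a one-line derivative computation shows $h$ is nondecreasing on $[1,\infty)$, hence $h(s)\ge h(1)=2-2^\alpha$, and the convexity bound $2^\alpha\le 1+\alpha$ on $[0,1]$ gives $2-2^\alpha\ge 1-\alpha\ge\frac{1-\alpha}{2^{\alpha+1}}$. Your route avoids integrating an inequality and replaces it with the cleaner structure of monotonicity plus an endpoint check; moreover, it implicitly yields the sharper constant $2-2^\alpha$ (which dominates $\frac{1-\alpha}{2^{\alpha+1}}$ on $(0,1)$ and is in fact optimal in the statement). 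The paper's approach produces the stated constant directly without a separate endpoint comparison, at the price of a slightly fussier calculation involving the antiderivative of $\tau^{\alpha-1}-1$ near $\tau=0$.
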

\begin{proof}
Without loss of generality, we can suppose that $B\le A$. We can also suppose that $B>0$, otherwise there is nothing to prove. If we set $t=B/A\in(0,1]$, we just need to prove that
\begin{equation}
\label{quantitativo}
(1+t)^\alpha\le 1+t^\alpha-\frac{1-\alpha}{2^{\alpha+1}}\,t^\alpha.
\end{equation}
To this aim, we introduce the function of one real variable
\[
h(t)=\frac{1+t^\alpha}{(1+t)^\alpha},\qquad\text{for every}\ t\in(0,1].
\]
We have
\[
h'(t)=\alpha\,\frac{t^{\alpha-1}-1}{(1+t)^{\alpha+1}}\ge\frac{\alpha}{2^{\alpha+1}}\,(t^{\alpha-1}-1),\qquad\text{for every}\ t\in(0,1].
\]
Thus, we get
\[
\begin{split}
h(t)=h(0)+\int_0^t h'(\tau)\,d\tau&\ge 1+\frac{\alpha}{2^{\alpha+1}}\,\int_0^t(\tau^{\alpha-1}-1)\,d\tau\\
&=1+\frac{1}{2^{\alpha+1}}\,(t^\alpha-\alpha\,t)\ge 1+\frac{1-\alpha}{2^{\alpha+1}}\,t^\alpha.
\end{split}
\]
By recalling the definition of $h$, this implies that
\[
1+t^\alpha\ge (1+t)^\alpha+(1+t)^\alpha\,\frac{1-\alpha}{2^{\alpha+1}}\,t^\alpha.
\]
By using that $(1+t)^\alpha\ge 1$, we get \eqref{quantitativo}, as desired.
\end{proof}

We recall the following particular family of {\it Gagliardo-Nirenberg interpolation inequalities}
	\begin{equation}
		\label{GNS}
		\|\psi\|_{L^r(\Omega)}\le G_{N,p,r}\,\|\nabla \psi\|_{L^p(\Omega)}^\vartheta\,\|\psi\|_{L^p(\Omega)}^{1-\vartheta},\qquad \mbox{ for every } \psi\in C^\infty_0(\Omega),
	\end{equation}
	which hold for every $1<p<\infty$ and $r>p$ satisfying \eqref{pq},
with $G_{N,p,r}>0$ only depending on $N,p$ and $r$. The exponent $\vartheta$ is simply dictated by scale invariance and given by 
	\[
	\vartheta=\frac{N}{p}-\frac{N}{r},
	\] 
see for example \cite[Theorem 3.8.1]{BraBook} or \cite[Theorem 12.83]{Leo}.
\par
In the next result, we recall that in the super-homogeneous regime $q>p$ we are interested in, all the Poincar\'e-Sobolev constants are equivalent.
\begin{prop}
\label{prop:positive}
Let $1<p<\infty$ and $q>p$ satisfying \eqref{pq}. Then there exist $C_2=C_2(N,p,q)>0$ and $C_3=C_3(N,p,q)>$ such that, for every open set $\Omega\subseteq \mathbb{R}^N$,  it holds
\[
C_2\,\Big(\lambda_p(\Omega)\Big)^{1-\frac{N}{p}+\frac{N}{q}}\le \lambda_{p,q}(\Omega)\le C_3\,\Big(\lambda_p(\Omega)\Big)^{1-\frac{N}{p}+\frac{N}{q}}.
\]
In particular, for every  $q$ satisfying \eqref{pq}
 we have
\[
\lambda_{p}(\Omega)>0 \qquad \Longleftrightarrow \qquad \lambda_{p,q}(\Omega)>0.
\]
\end{prop}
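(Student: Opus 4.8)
The plan is to prove the two-sided bound in Proposition \ref{prop:positive} by interpolation, using the Gagliardo--Nirenberg inequality \eqref{GNS} for the upper bound and a scaling/dilation argument for the lower bound. Throughout, write $\vartheta=\frac{N}{p}-\frac{N}{q}\in(0,1)$, so that $1-\vartheta=1-\frac{N}{p}+\frac{N}{q}$ is the exponent appearing in the statement.

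\medskip

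\textbf{Upper bound.} First I would fix a test function $\psi\in C^\infty_0(\Omega)$. Plugging it into \eqref{GNS} with $r=q$ gives
\[
\|\psi\|_{L^q(\Omega)}\le G_{N,p,q}\,\|\nabla\psi\|_{L^p(\Omega)}^{\vartheta}\,\|\psi\|_{L^p(\Omega)}^{1-\vartheta}.
\]
By definition of $\lambda_p(\Omega)$ we have $\|\psi\|_{L^p(\Omega)}\le \lambda_p(\Omega)^{-1/p}\,\|\nabla\psi\|_{L^p(\Omega)}$, hence
\[
\|\psi\|_{L^q(\Omega)}\le G_{N,p,q}\,\lambda_p(\Omega)^{-\frac{1-\vartheta}{p}}\,\|\nabla\psi\|_{L^p(\Omega)}.
\]
Raising to the power $p$, dividing by $\|\psi\|_{L^q(\Omega)}^p$ and taking the infimum over $\psi\in C^\infty_0(\Omega)$ (which we may do by the density remark recalled after the definition of $\lambda_{p,q}$), we get $\lambda_{p,q}(\Omega)\ge$ a constant times $\lambda_p(\Omega)^{1-\vartheta}$; equivalently, rearranging, $\lambda_{p,q}(\Omega)\le C_3\,\lambda_p(\Omega)^{1-\vartheta}$ is \emph{not} what this gives — one must be careful about the direction. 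Let me reorganize: the computation above shows $\lambda_{p,q}(\Omega)\ge C_2\,\lambda_p(\Omega)^{1-\vartheta}$ with $C_2=G_{N,p,q}^{-p}$. So this is in fact the \emph{lower} bound on $\lambda_{p,q}$.

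\medskip

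\textbf{The other bound.} For the reverse inequality $\lambda_{p,q}(\Omega)\le C_3\,\lambda_p(\Omega)^{1-\vartheta}$ I would argue by a rescaling reduction to a model set, or more directly: pick a near-optimal $\psi$ for $\lambda_p(\Omega)$, i.e. $\|\nabla\psi\|_{L^p(\Omega)}^p\le (1+\varepsilon)\,\lambda_p(\Omega)\,\|\psi\|_{L^p(\Omega)}^p$, and estimate $\|\psi\|_{L^q(\Omega)}$ from below in terms of $\|\psi\|_{L^p(\Omega)}$ and $\|\nabla\psi\|_{L^p(\Omega)}$ — but this requires a \emph{reverse} Gagliardo--Nirenberg bound, which is false in general. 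The clean route is instead a dilation argument. Given $\varepsilon>0$, choose $\psi\in C^\infty_0(\Omega)$ almost optimal for $\lambda_p(\Omega)$, and consider $\psi_t(x)=\psi(x/t)$: a change of variables gives $\|\nabla\psi_t\|_{L^p}^p=t^{N-p}\|\nabla\psi\|_{L^p}^p$ and $\|\psi_t\|_{L^q}^q=t^N\|\psi\|_{L^q}^q$, but the support of $\psi_t$ no longer lies in $\Omega$ for $t>1$. Since scaling $\Omega$ changes both constants homogeneously, the honest statement is: it suffices to prove both inequalities for one normalized set in each scaling class, and then the inequality $\lambda_{p,q}(\Omega)\le C_3\,\lambda_p(\Omega)^{1-\vartheta}$ follows from applying \eqref{GNS} to a sequence of test functions that is near-optimal for $\lambda_{p,q}(\Omega)$ and a converse — but actually the simplest correct approach is: this second inequality is precisely the content of \eqref{GNS} combined with the fact that, after rescaling $\Omega$ so that $\lambda_p(\Omega)=1$, any $\psi$ satisfies $\|\psi\|_{L^p}\le\|\nabla\psi\|_{L^p}$, so from \eqref{GNS}, $\|\psi\|_{L^q}\le G_{N,p,q}\|\nabla\psi\|_{L^p}$ and hence $\lambda_{p,q}(\Omega)\ge G_{N,p,q}^{-p}$; for the matching upper bound one tests $\lambda_{p,q}$ with a near-minimizer $\psi$ of $\lambda_p$ and uses $\|\psi\|_{L^q}\ge c\,\|\psi\|_{L^p}^{1-\vartheta}\|\nabla\psi\|_{L^p}^{\vartheta}$... which again is reverse GN. I therefore expect the author's actual argument for the upper bound on $\lambda_{p,q}$ to go through an explicit test function localized on a ball of radius comparable to $r_\Omega$, invoking Proposition~\ref{prop:positivity} / \eqref{lowerboundintro} together with the known two-sided control of $\lambda_p(\Omega)$ by $r_\Omega^{-p}$ for "nice" sets — or, most likely, simply by a direct scaling normalization combined with \eqref{GNS} applied in both directions with the roles of the two inequalities as I identified.

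\medskip

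\textbf{Main obstacle, and the equivalence.} The genuinely easy half is $\lambda_{p,q}(\Omega)\ge C_2\,\lambda_p(\Omega)^{1-\vartheta}$, done above in one line from \eqref{GNS}. The subtle half is the upper bound $\lambda_{p,q}(\Omega)\le C_3\,\lambda_p(\Omega)^{1-\vartheta}$: the cleanest way is to observe that by scale invariance one may assume $\lambda_p(\Omega)=1$, fix $\varepsilon>0$ and a function $\psi\in C^\infty_0(\Omega)$ with $\|\nabla\psi\|_{L^p(\Omega)}^p\le 1+\varepsilon$ and $\|\psi\|_{L^p(\Omega)}=1$, and then use \eqref{GNS} together with the truncation/superlevel-set structure of $\psi$ to produce a competitor with controlled $L^q$ mass; alternatively, and more robustly, note that for the specific direction we want, it is enough to exhibit \emph{some} $\psi$ with $\|\nabla\psi\|_{L^p}^p/\|\psi\|_{L^q}^p\le C_3$, and one builds it from a near-minimizer of $\lambda_p$ by a concentration-at-a-point dilation that keeps the support inside $\Omega$ up to negligible error. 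Once both inequalities are in hand, the final "in particular" is immediate: if $\lambda_p(\Omega)>0$ then $\lambda_p(\Omega)^{1-\vartheta}>0$, so the lower bound forces $\lambda_{p,q}(\Omega)>0$; conversely, if $\lambda_{p,q}(\Omega)>0$ then the upper bound forces $\lambda_p(\Omega)^{1-\vartheta}>0$, hence $\lambda_p(\Omega)>0$. Thus the equivalence holds for every admissible $q$ simultaneously, which is exactly what will be used (via the geometric bound \eqref{lowerboundintro} for $\lambda_p$) to deduce $\lambda_{p,q}(\Omega)>0$ for Steiner symmetric $\Omega\neq\mathbb{R}^N$.
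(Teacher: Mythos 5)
Your derivation of the lower bound $\lambda_{p,q}(\Omega)\ge G_{N,p,q}^{-p}\,\lambda_p(\Omega)^{1-\vartheta}$ from \eqref{GNS} and the Poincar\'e inequality is correct and is exactly the paper's argument for that half (take $C_2=G_{N,p,q}^{-p}$). Your ``in particular'' deduction at the end is also fine, modulo the upper bound.

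The upper bound $\lambda_{p,q}(\Omega)\le C_3\,\lambda_p(\Omega)^{1-\vartheta}$ is where you have a genuine gap, and none of the routes you sketch would close it. You correctly notice that a reverse Gagliardo--Nirenberg inequality is false and that dilations $\psi(\cdot/t)$ leave the support of $\Omega$. But your fallbacks are also unworkable: normalizing $\lambda_p(\Omega)=1$ by scaling is legitimate (both sides scale the same way), but it reduces nothing, since there is a whole zoo of non-similar open sets with $\lambda_p=1$ and one still needs a uniform lower bound on $\|\psi\|_{L^q}$ for \emph{some} competitor; ``concentration-at-a-point dilation'' makes the Rayleigh quotient $\|\nabla\psi_t\|_p^p/\|\psi_t\|_q^p\sim t^{N-p-Np/q}$ blow up as $t\to0$ when $q<p^*$, which is the wrong direction; and the proposed detour through $r_\Omega$ and \eqref{lowerboundintro} cannot work here because Proposition~\ref{prop:positive} is stated for \emph{every} open set $\Omega$, while a two-sided comparison of $\lambda_p(\Omega)$ with $r_\Omega^{-p}$ fails for general $\Omega$ (e.g.\ rooms-and-corridors domains have bounded inradius but $\lambda_p=0$). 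In fact this direction is the nontrivial content of the proposition and the paper does not prove it: it cites \cite[Theorem 15.4.1]{Maz} (alternative proofs in \cite[Corollary 5.9]{BraPriZag2} and \cite[Theorem 6.1]{BozBra}). The underlying argument is a truncation/level-set construction \`a la Maz'ya applied to a near-minimizer of $\lambda_p(\Omega)$, not an interpolation or dilation argument; if you want a self-contained proof you should look at one of those references, and otherwise you should simply cite the result as the paper does.
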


\begin{proof}
By using the  Gagliardo-Nirenberg inequality \eqref{GNS} with $r=q$, we have that 
\[
\| u\|_{L^q(\Omega)}^p\le (G_{N,p,q})^p\, \left(\int_\Omega |\nabla u|^p\,dx\right)^\vartheta\,\left(\int_\Omega |u|^p\,dx\right)^{1-\vartheta},\qquad \text{for every}\ u\in C^\infty_0(\Omega),
\]
 where the  exponent $\vartheta\in(0,1)$ is given by
\[
\vartheta=\frac{N}{p}-\frac{N}{q}.
\]
If we assume $\lambda_p(\Omega)>0$, by using the Poincar\'e inequality on the right-hand side above, we  get
\[
\|u\|_{L^q(\Omega)}^p\le (G_{N,p,q})^p\, \left(\int_\Omega |\nabla u|^p\,dx\right)\,\left(\frac{1}{\lambda_p(\Omega)}\right)^{1-\vartheta},\qquad \text{for every}\ u\in C^\infty_0(\Omega),
\] 
which in turn implies the  desired lower bound for $\lambda_{p,q}(\Omega)$.   
\par
The upper bound can be deduced by \cite[Theorem 15.4.1]{Maz} (for an alternative proof, see \cite[Corollary 5.9]{BraPriZag2} for $p>N$ and \cite[Theorem 6.1]{BozBra} for $1<p\le N$).
\end{proof}

\subsection{The Lane-Emden equation}

In the case $q<\infty$, whenever the infimum defining $\lambda_{p,q}(\Omega)$ is attained by some function $u\in W^{1,p}_0(\Omega)$, we have that $u$ satisfies the associated Euler-Lagrange equation 
\[
\int_{\Omega}\langle|\nabla u|^{p-2}\,\nabla u, \nabla \varphi\rangle\,dx= \lambda_{p,q}(\Omega)\int_\Omega |u|^{q-2}\,u\,\varphi\,dx,\qquad\text{for every}\ \varphi\in C^\infty_0(\Omega).
\]
In other words, $u$ is a weak solution of the so-called {\it Lane-Emden equation}
\[
-\Delta_p u=\lambda_{p,q}(\Omega)\,|u|^{q-2}\,u,\qquad \text{ in }\Omega,
\]
with homogeneous Dirichlet boundary condition.
In the case $p>N$ and  $q=\infty$, one can show that the Euler-Lagrange equation takes the following form 
\[
\int_{\Omega}\langle |\nabla u|^{p-2}\,\nabla u ,\nabla \varphi\rangle\,dx=  \lambda_{p,\infty}(\Omega)\,  |u(x_0)|^{p-2}\,u(x_0)\,\varphi(x_0),\quad  \text{for every}\ \varphi \in C^{\infty}_0(\Omega),
\]
where $x_0\in\Omega$ satisfies $|u(x_0)|=\|u\|_{L^{\infty}(\Omega)}$. We refer to Appendix \ref{sec: app} below for this fact. 
\par
In particular, for every $q$ as in \eqref{pq},  we have that every non-negative extremal $u$ of $\lambda_{p,q}(\Omega)$ is a weakly $p-$superharmonic function, that is,  $u$ satisfies
\[
\int_{\Omega}\langle |\nabla u|^{p-2}\,\nabla u ,\nabla \varphi\rangle\,dx\geq 0, \qquad \text{for every}\ \varphi\in C^\infty_0(\Omega)\ \text{such that}\ \varphi\geq 0.
\]
As a consequence, when $\Omega$ is a connected open set, every non-negative minimizer $u$ of $\lambda_{p,q}(\Omega)$ is actually positive in $\Omega$, thanks to the strong minimum principle.
 We also recall that any minimizer $u$ of $\lambda_{p,q}(\Omega)$, if it does exist,  has constant sign: 
for the case $q<\infty$ we refer to \cite[Lemma 2.1]{BraLin}, while for the case when  $p>N$ and $q=\infty$, we refer to Proposition \ref{prop:francesca}.
\par
Finally, we recall the following mild regularity result. The specific form of the a priori estimate will be important. 
\begin{lm}
\label{lm:Linfty}
Let $1<p<\infty$ and let $p\le q<\infty$ be an exponent satisfying \eqref{pq}.
Let $\Omega\subseteq\mathbb{R}^N$ be an open set such that $\lambda_p(\Omega)>0$ and
 let  $u\in W^{1,p}_0(\Omega)$ be a non-negative weak subsolution of
\[
-\Delta_p u=\lambda\,u^{q-1},\qquad \text{in}\ \Omega,
\]
for some $\lambda>0$. Then $u\in L^\infty(\Omega)$ and we have 
\[
\|u\|_{L^\infty(\Omega)}\le C_1\,\left(\lambda^\frac{N}{p\,q}\,\|u\|_{L^q(\Omega)}\right)^\frac{p\,q}{p\,q-(q-p)\,N},
\]
for some $C_1=C_1(N,p,q)>0$.
\end{lm}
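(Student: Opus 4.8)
The plan is to run a Moser-type iteration that crucially exploits the subcriticality of $q$. First note that $\lambda_p(\Omega)>0$ together with Proposition~\ref{prop:positive} gives $\lambda_{p,q}(\Omega)>0$, hence the continuous embedding $W^{1,p}_0(\Omega)\hookrightarrow L^q(\Omega)$; in particular $u\in L^q(\Omega)$, so that $\|u\|_{L^q(\Omega)}<+\infty$ and the right-hand side of the claimed estimate makes sense. Throughout we shall also use the Sobolev inequality $\|v\|_{L^{p^*}(\Omega)}\le C_{N,p}\,\|\nabla v\|_{L^p(\Omega)}$, valid for every $v\in W^{1,p}_0(\Omega)$ and every open $\Omega\subseteq\mathbb{R}^N$ with $C_{N,p}$ depending only on $N$ and $p$ (when $p<N$); the case $p\ge N$ is commented on at the end.

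Fix $\beta\ge 0$ and $M>0$, write $u_M:=\min\{u,M\}$, and test the subsolution inequality with the non-negative admissible function $\varphi=u\,u_M^{p\beta}\in W^{1,p}_0(\Omega)$. Since $\nabla u_M$ is supported on $\{u<M\}$, a direct computation gives the Caccioppoli-type inequality
\[
\frac{1+p\beta}{(1+\beta)^p}\int_{\Omega}\big|\nabla\big(u_M^{1+\beta}\big)\big|^p\,dx\le\lambda\int_{\Omega}u^q\,u_M^{p\beta}\,dx\le\lambda\int_{\Omega}u^{q+p\beta}\,dx,
\]
hence, using $(1+\beta)^p/(1+p\beta)\le(1+\beta)^{p-1}$, we get $\int_\Omega|\nabla(u_M^{1+\beta})|^p\,dx\le\lambda\,(1+\beta)^{p-1}\int_\Omega u^{q+p\beta}\,dx$. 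Applying the Sobolev inequality to $v=u_M^{1+\beta}$ and letting $M\to\infty$ by monotone convergence yields: whenever $u\in L^{q+p\beta}(\Omega)$,
\[
\|u\|_{L^{(1+\beta)p^*}(\Omega)}^{(1+\beta)p}\le C_{N,p}^p\,\lambda\,(1+\beta)^{p-1}\,\|u\|_{L^{q+p\beta}(\Omega)}^{q+p\beta}.
\]

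Now set $s_0=q$ and, given $s_k\ge q$, put $\beta_k=(s_k-q)/p\ge 0$ and $s_{k+1}=(1+\beta_k)\,p^*=\tfrac{p^*}{p}\big(s_k-(q-p)\big)$. One checks that $s_{k+1}-s^*=\tfrac{p^*}{p}(s_k-s^*)$ with $s^*=\tfrac{p^*(q-p)}{p^*-p}$, and that $q-s^*=\tfrac{p(p^*-q)}{p^*-p}>0$ precisely because $q<p^*$; hence $s_k\nearrow+\infty$. Since $u\in L^{s_0}(\Omega)$, an induction using the displayed estimate shows $u\in L^{s_k}(\Omega)$ for all $k$, with
\[
M_{k+1}\le\Big(C_{N,p}^p\,\lambda\,(1+\beta_k)^{p-1}\Big)^{\frac{1}{s_k-(q-p)}}\,M_k^{\gamma_k},\qquad M_k:=\|u\|_{L^{s_k}(\Omega)},\ \ \gamma_k:=\frac{s_k}{s_k-(q-p)}>1.
\]
Using $s_k-(q-p)=\tfrac{p}{p^*}s_{k+1}$ one obtains the telescoping identity $\prod_{k=0}^{n-1}\gamma_k=(p^*/p)^n\,q/s_n\to\Gamma:=\dfrac{q}{q-s^*}=\dfrac{pq}{pq-(q-p)N}$. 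Since $\gamma_k\to 1$, $1+\beta_k$ grows only polynomially in $k$, while the exponents weighting the $\lambda$- and $C_{N,p}$-factors in the iterated inequality decay geometrically, the infinite product of prefactors converges, and equals $C_1\,\lambda^{N\Gamma/(pq)}$ for a constant $C_1=C_1(N,p,q)$. As $u\in L^q(\Omega)$, $s_k\to\infty$ and $\sup_k M_k<\infty$, we have $u\in L^\infty(\Omega)$ with $\|u\|_{L^\infty(\Omega)}=\lim_k M_k$, and passing to the limit in the iterated bound gives $\|u\|_{L^\infty(\Omega)}\le C_1\,\lambda^{N\Gamma/(pq)}\,\|u\|_{L^q(\Omega)}^{\Gamma}$, which is exactly the claimed estimate.

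When $p\ge N$ (and $q<\infty$) the pure Sobolev inequality above is unavailable and one uses instead the Gagliardo--Nirenberg inequality \eqref{GNS} with a suitable exponent $r>p$ as in \eqref{pq} (a finite $r$, followed by a final step with $r$ close to $\infty$). The extra factor $\|u_M^{1+\beta}\|_{L^p}$ that then appears is a lower-order term — it is controlled by $M_k$ at the exponent $s_k>p(1+\beta_k)$, which is where $q>p$ enters — and it is absorbed into the same scheme; since no Poincar\'e-type constant of $\Omega$ is ever used, the final constant still depends only on $N,p,q$. I expect the only genuinely delicate point to be that the nonlinearity forces the \emph{shifted} exponent $q+p\beta$ (rather than $p+p\beta$) on the right-hand side of the Caccioppoli inequality, so that the recursion for $M_k$ has $\gamma_k>1$; it closes only in the subcritical range, and keeping track of $\prod_k\gamma_k$ is exactly what produces the sharp exponent $\Gamma$ and the correct power of $\lambda$. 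The truncation $u_M$ is a routine device, needed only to make the test functions admissible before $u\in L^\infty$ is known.
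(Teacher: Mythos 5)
Your argument is essentially the same as the one the paper relies on: the paper simply refers to \cite[Proposition~2.4]{BraLin}, which is exactly this kind of Moser iteration, and you have reconstructed it from scratch. For $p<N$ your computation is complete and correct: the truncated test function $u\,u_M^{p\beta}$ is the right device to make the iteration rigorous before $u\in L^\infty$ is known, the Caccioppoli inequality and the elementary bound $(1+\beta)^p/(1+p\beta)\le(1+\beta)^{p-1}$ are fine, the recursion $s_{k+1}=\tfrac{p^*}{p}(s_k-(q-p))$ with repelling fixed point $s^*<q$ is correctly identified (and it is precisely here that $q<p^*$ enters), the telescoping $\prod\gamma_k\to\Gamma=\tfrac{pq}{pq-(q-p)N}$ is right, and a check of the accumulated prefactors confirms the power of $\lambda$ is $N\Gamma/(pq)$ and that the $(1+\beta_k)^{p-1}$ factors contribute a convergent product depending only on $N,p,q$.

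The one place where you are genuinely hand-waving is $p\ge N$. Your sketch (``Gagliardo--Nirenberg with $r$ close to $\infty$'') does not quite match how this is normally, and more cleanly, handled. For $p>N$, no infinite iteration is needed at all: after a single Caccioppoli step with $\beta=(q-p)/p$ (i.e.\ the test function $u\,u_M^{q-p}$), one has $u^{q/p}\in W^{1,p}_0(\Omega)$ with $\|\nabla(u^{q/p})\|_{L^p}^p\lesssim\lambda\,\|u\|_{L^q}^q$, and Morrey's interpolation inequality $\|\varphi\|_{L^\infty}\lesssim\|\nabla\varphi\|_{L^p}^{N/p}\,\|\varphi\|_{L^p}^{1-N/p}$ applied to $\varphi=u^{q/p}$, together with $\|u^{q/p}\|_{L^p}=\|u\|_{L^q}^{q/p}$, yields the claimed estimate directly with the stated exponents. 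For $p=N$ one runs the iteration with the GNS inequality at a fixed finite exponent (the paper uses $r=2N$ in its analogous Lemma~\ref{lm:LinftyLp_localized}), and the extra lower-order $\|u_M^{1+\beta}\|_{L^p}$ factor folds into the recursion without affecting the structure. You should spell out at least the $p>N$ case, since as written the reader cannot verify that your scheme closes and delivers a constant independent of $\Omega$; but this is a matter of completeness, not of a wrong idea.
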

\begin{proof}
The proof consists in reproducing verbatim that of \cite[Proposition 2.4]{BraLin}, thus it is omitted. 
For $q>p$, one just needs to preliminarily observe that $u\in L^q(\Omega)$, as well. 
Indeed, by Proposition \ref{prop:positive} and the assumption $\lambda_p(\Omega)>0$, the continuous embedding $W^{1,p}_0(\Omega)\hookrightarrow L^q(\Omega)$ holds.
\end{proof}

\section{Steiner symmetrization} 
\label{sec:3}

In this section we recall the main features of the classical Steiner symmetrization, referring the reader to \cite{CF}, \cite[Section 2.2]{He}, \cite{Kw} and \cite[Chapter 3]{LiLo} for more details.
\subsection{Sets}
For every direction $\mathbf{e}\in \mathbb{S}^{N-1}$, we denote by $\Pi_{\mathbf{e}}:\mathbb{R}^{N}\to\mathbb{R}^{N}$ the orthogonal projection onto the hyperplane $\langle\mathbf{e}\rangle^\bot:=\{x\in\mathbb{R}^N:\,  \langle x, \mathbf{e}\rangle=0\}$, that is 
\[
\Pi_{\mathbf{e}}(x)=x-\langle x, \mathbf{e}\rangle\,\mathbf{e},\qquad \mbox{ for every } x\in\mathbb{R}^N.
\]
For every direction $\mathbf{e}\in\mathbb{S}^{N-1}$, we also define
\[
\mathcal{R}_\mathbf{e}(x)=x-2\,\langle x,\mathbf{e}\rangle\,\mathbf{e},\qquad \text{for every}\ x\in\mathbb{R}^N,
\]
the {\it reflection with respect to the hyperplane} $\langle \mathbf{e}\rangle^\bot$. 
\par
Let $E\subseteq\mathbb{R}^N$ be a set and $\mathbf{e}\in\mathbb{S}^{N-1}$ a given direction.
For every $x\in\Pi_\mathbf{e}(E)$, we set
\[
E_{\mathbf{e}}(x)=\{y\in E\,:\, \exists\, t\in\mathbb{R}\ \text{such that}\ y=x+t\,\mathbf{e}\}.
\]
This is nothing but the intersection between $E$ and the line passing through $x$, with direction $\mathbf{e}$.
The {\it Steiner symmetrization of $E$ with respect to the direction $\mathbf{e}$} is the set $\mathcal{S}_{\mathbf{e}}(E)$ given by
\[
\mathcal{S}_{\mathbf{e}}(E)=\left\{x+t\,\mathbf{e}\, :\, x\in \Pi_\mathbf{e}(E),\, |t|<\frac{\mathcal{H}^{1}(E_{\mathbf{e}}(x))}{2} \right\}.
\]
We recall that if the set $E\subseteq\mathbb{R}^N$ is Lebesgue measurable, then $\mathcal{S}_{\mathbf{e}}(E)$ has the same property and there holds  
\[
|E|=|\mathcal{S}_{\mathbf{e}}(E)|,
\]
for a proof see \cite[Theorem 2.3]{EvGa}. Moreover, if $E$ is an open set, then $\mathcal{S}_{\mathbf{e}}(E)$ is open, as well. In the following, we say that:
\begin{itemize}
\item[$\bullet$] $E$ is {\it symmetric with respect to the hyperplane} $\langle\mathbf{e}\rangle^{\perp}$ if 
\[
\mathcal{R}_\mathbf{e}(E)=E.
\]
In particular, for every $x\in\Pi_\mathbf{e}(E)$, if $x+t\,\mathbf{e}\in \Omega$ for some $t\in\mathbb{R}$, then we have $x-t\,\mathbf{e}\in\Omega$, as well;
\vskip.2cm
\item[$\bullet$] $E$ is {\it convex in the direction} $\textbf{e}$ if 
\[
x+t_1\,\textbf{e}\in \Omega,\ x+t_2\,\textbf{e}\in E\Longrightarrow x+(s\,t_1+(1-s)\,t_2)\,\textbf{e}\in E,\quad \begin{array}{c}\text{for every}\ x\in \Pi_\textbf{e}(E),\\ t_1,t_2\in\mathbb{R}, s\in(0,1).\end{array}
\]
\end{itemize} 
Then, the following characterization holds: 
\[
\mathcal{S}_{\mathbf{e}}(E)=E \qquad  \Longleftrightarrow \qquad  E\ \text{is symmetric with respect to}\ \langle\textbf{e}\rangle^{\perp}\ \text{and convex in the direction}\ \textbf{e}.
\]
In the sequel, we will consider open sets with an orthogonal system of symmetries, according to the following definition.

\begin{definition}[The class $\mathfrak{S}^N$]
\label{S}
We say that an open set $\Omega \subseteq \mathbb{R}^N$ is {\it Steiner symmetric} if 
\vskip.2cm
\begin{itemize}
\item[(S1)] $\Omega$ is  symmetric with respect to $\langle\mathbf{e}_j\rangle^{\perp}$, for every  $j \in\{1,\cdots,N\}$;
\vskip.2cm
\item[(S2)] $\Omega$ is convex in the direction $\mathbf{e}_j$, for every $j\in\{1,\cdots,N\}$.
\end{itemize}  
\vskip.2cm
We will indicate by $\mathfrak{S}^N$ the collection of all Steiner symmetric open subsets of $\mathbb{R}^N$.
\end{definition}
\begin{rem}
\label{rem:S1S2}
In particular, every $\Omega\in\mathfrak{S}^N$ has the following property: if $\Sigma\subseteq\Omega$ is a convex set, then we have 
\[
\left(\Sigma\cup \mathcal{R}_{\mathbf{e}_j}(\Sigma)\right)^{\rm ch}\subseteq\Omega,\qquad \text{for every}\, j\in\{1,\dots,N\},
\]
that is $\Omega$ contains the {\it convex hull} of $\Sigma\cup \mathcal{R}_{\mathbf{e}_j}(\Sigma)$, as well.
\end{rem}
The next result contains some basic geometric properties of Steiner symmetric sets.
\begin{lm}\label{lemmainradSN}
Let $\Omega\in\mathfrak{S}^N$ with $\Omega\neq\mathbb R^N$. Then we have 
\[
r_\Omega<+\infty\qquad \text{and}\qquad B_{r_\Omega}\subseteq\Omega.
\]
Moreover, the following measure density condition holds
\begin{equation}
\label{mde}
\inf\left\{\frac{|B_r(x)\setminus \Omega|}{|B_r(x)|}\, :\, x\in\partial\Omega,\ r>0\right\}\ge \left(\frac{1}{2}\right)^N.
\end{equation}
\end{lm}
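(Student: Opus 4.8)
The plan is to prove the three claims in order: first the inclusion $B_{r_\Omega}\subseteq\Omega$ (once we know $r_\Omega$ is finite this is the geometric heart), then finiteness of $r_\Omega$, and finally the measure density condition \eqref{mde}.

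\emph{Step 1: a ball centered at the origin.} First I would show that $\Omega$ contains a ball of radius $r$ whenever it contains \emph{any} ball of radius $r$. Suppose $B_r(x_0)\subseteq\Omega$. Applying property (S1) in direction $\mathbf{e}_1$ and Remark \ref{rem:S1S2} with $\Sigma=B_r(x_0)$, the set $\Omega$ contains the convex hull of $B_r(x_0)\cup\mathcal{R}_{\mathbf{e}_1}(B_r(x_0))=B_r(x_0)\cup B_r(\mathcal{R}_{\mathbf{e}_1}(x_0))$; this convex hull contains the ball $B_r(\Pi_{\mathbf{e}_1}(x_0))$, i.e. a ball of the same radius whose center has vanishing first coordinate. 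Iterating over $i=2,\dots,N$ (each step preserves the coordinates already set to zero, since the operation only moves the center along $\mathbf{e}_i$) we obtain $B_r(0)\subseteq\Omega$. Taking the supremum over admissible $r$ gives $B_{r_\Omega}\subseteq\overline{\Omega}$, and since $\Omega$ is open and each $B_r(0)$ with $r<r_\Omega$ sits inside $\Omega$, we get $B_{r_\Omega}\subseteq\Omega$ once $r_\Omega<\infty$ (if $r_\Omega=\infty$ the argument would force $\Omega=\mathbb{R}^N$).

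\emph{Step 2: finiteness of the inradius.} This is the main obstacle, since $\Omega$ is allowed to be unbounded in every direction. Suppose toward a contradiction that $r_\Omega=+\infty$. By Step 1, $B_R(0)\subseteq\Omega$ for every $R>0$, hence $\bigcup_{R>0}B_R(0)=\mathbb{R}^N\subseteq\Omega$, contradicting $\Omega\neq\mathbb{R}^N$. So $r_\Omega<+\infty$, and combining with Step 1 we get $B_{r_\Omega}\subseteq\Omega$. (The subtlety I want to be careful about here is exactly the passage ``$B_r(x_0)\subseteq\Omega$ for arbitrarily large $r$'' $\Rightarrow$ ``$B_R(0)\subseteq\Omega$ for all $R$'', which is precisely what Step 1 delivers; without the Steiner structure this implication fails.)

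\emph{Step 3: the measure density condition.} Fix $x\in\partial\Omega$ and $r>0$. Write $x=(x_1,\dots,x_N)$ and observe that, by property (S1), at least one coordinate must be nonzero — indeed $0\in B_{r_\Omega}\subseteq\Omega$, so $0\notin\partial\Omega$. Pick an index $j$ with $x_j\neq0$, say $x_j>0$ (the case $x_j<0$ is symmetric). The key point is that the half-ball $H:=B_r(x)\cap\{y:\,y_j<0\}$ is disjoint from $\Omega$: if some $y\in H\cap\Omega$, then by convexity in direction $\mathbf{e}_j$ applied to the points $y$ and $\mathcal{R}_{\mathbf{e}_j}(y)\in\Omega$ (which lies in $\Omega$ by (S1)), the whole segment between them is in $\Omega$, in particular the point $\Pi_{\mathbf{e}_j}(y)$ with $j$-th coordinate $0$; but then, again by convexity in $\mathbf{e}_j$ between a deep interior point of $\Omega$ on the $\mathbf{e}_j$-axis through $\Pi_{\mathbf{e}_j}(y)$ and..., more directly: $x$ lies on the open segment joining $\Pi_{\mathbf{e}_j}(y)\in\Omega$ to a point of $\Omega$ with large $j$-th coordinate, forcing $x\in\Omega$, contradicting $x\in\partial\Omega$. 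Hence $H\subseteq B_r(x)\setminus\Omega$, and since $H$ is a half-ball (because $x_j<r$ would be needed for $H$ to be strictly less than half — but actually $x_j\ge0$ only guarantees $|H|\ge\frac12|B_r(x)|$ when $x_j\le 0$; when $0<x_j$ the slab $\{y_j<0\}$ cuts off \emph{less} than half). The correct bound: the region $\{y\in B_r(x):\ y_j<0\}$ has measure at least... I would instead argue that $B_r(x)\setminus\Omega$ contains $\{y\in B_r(x):\ y_j\le 0 \text{ or } \cdots\}$, and crude but sufficient: it contains $\{y\in B_r(x):\ \langle y,\mathbf{e}_j\rangle\,x_j<0\}$ only when that slab misses the center, which may fail. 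To get the clean constant $(1/2)^N$ I would instead repeat the reflection argument in \emph{each} coordinate for which $x_i\neq 0$ — but only one is guaranteed. The robust route: $B_r(x)\setminus\Omega\supseteq B_r(x)\cap\bigcap_i\{y:\ y_i x_i\le 0\ \text{if}\ x_i\neq 0\}$, and the orthant $\prod_i\{y_i x_i\le 0\}$ (with the half-space $\{y_i x_i\le0\}$ passing through or below the center $x$ in each nonzero coordinate, since $x_i$ and $x_i$ have the same sign so $y_i=x_i$ gives $x_i^2>0$, i.e. the center is on the \emph{excluded} side) — hence in coordinate $i$ the center $x$ lies strictly inside the kept region's complement, meaning the kept slab $\{y_i x_i\le 0\}$ contains less than half of $B_r(x)$. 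This still looks like it gives less than $(1/2)^N$. I expect the author's actual argument uses that $B_r(x)\setminus\Omega\supseteq \mathcal{R}$ where $\mathcal{R}$ is the reflection through $x$ of the largest subcube of $B_r(x)\cap\Omega$ centered appropriately; concretely, using Step 1's centered-ball property together with convexity one shows $\Omega\cap B_r(x)$ cannot meet all $2^N$ ``coordinate-orthants at $x$'' and in fact the orthant pointing ``away from the origin'' in every coordinate lies outside $\Omega$. I would fill in this $2^N$-orthant decomposition to land exactly on $(1/2)^N$, treating it as the one genuinely fiddly computation.

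In summary: Steps 1–2 are the conceptual core (use Remark \ref{rem:S1S2} to recenter balls, then unboundedness forces $\Omega=\mathbb{R}^N$), and Step 3 is a reflection/convexity argument whose only delicate part is bookkeeping the orthant that is guaranteed to avoid $\Omega$ so as to extract the sharp fraction $(1/2)^N$; that bookkeeping is the step I expect to cost the most care.
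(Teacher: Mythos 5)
Steps 1 and 2 of your proposal essentially reproduce the paper's argument: you recenter a ball at the origin by iterating Remark~\ref{rem:S1S2} over the coordinate directions, and conclude that an infinite inradius would force $\Omega=\mathbb{R}^N$. (The paper works with a sequence $r_n\nearrow r_\Omega$ rather than a direct contradiction, but this is purely cosmetic.)

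Step 3, however, has a genuine gap that you partly diagnose yourself but never repair. Your first attempt -- that the half-space $\{y_j<0\}$ intersected with $B_r(x)$ misses $\Omega$ -- is simply false: take $\Omega=\mathbb{R}\times(-1,1)$, $x=(5,1)$, $j=1$ and $r>\sqrt{5.1^2+1}$; then $(-0.05,0)\in\Omega\cap B_r(x)\cap\{y_1<0\}$. Your intermediate segment argument is also not sound: starting from $y\in\Omega$ you pass to $\Pi_{\mathbf{e}_j}(y)\in\Omega$ and then assert that $x$ lies on a segment through $\Pi_{\mathbf{e}_j}(y)$, but $\Pi_{\mathbf{e}_j}(y)$ has the other coordinates equal to those of $y$, not of $x$, so that line does not pass through $x$. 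Your ``robust route'' via $\bigcap_i\{y_i x_i\le 0\}$ describes an orthant with vertex at the \emph{origin}, not at $x$, so its intersection with $B_r(x)$ is generically much smaller than $2^{-N}|B_r(x)|$ (and can be empty). The conjecture you land on at the end -- that the orthant with vertex at $x$ pointing away from the origin avoids $\Omega$ -- is exactly the paper's claim, but you leave it unproven.

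To close the gap, argue as follows. First, by the reflection symmetries (S1) one has $|B_r(x)\setminus\Omega|=|B_r(\overline x)\setminus\Omega|$ with $\overline x=(|x_1|,\dots,|x_N|)\in\partial\Omega$, so one may assume $x_i\ge 0$ for every $i$. Then claim $\mathcal{C}_x:=\prod_{i=1}^N[x_i,+\infty)\subseteq\mathbb{R}^N\setminus\Omega$. Indeed if $z=(z_1,\dots,z_N)\in\Omega\cap\mathcal{C}_x$, then $z_1\ge x_1\ge 0$; by (S1)+(S2) in direction $\mathbf{e}_1$ (i.e.\ Remark~\ref{rem:S1S2}) the full segment $\{(t,z_2,\dots,z_N):-z_1\le t\le z_1\}$ lies in $\Omega$, and choosing $t=x_1$ gives $(x_1,z_2,\dots,z_N)\in\Omega$. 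Repeat in direction $\mathbf{e}_2$ on this new point, and so on; after $N$ steps you obtain $x\in\Omega$, contradicting $x\in\partial\Omega$. Since $\mathcal{C}_x$ is an orthant whose vertex is precisely the center of $B_r(x)$, one has $|B_r(x)\cap\mathcal{C}_x|=2^{-N}|B_r(x)|$, giving \eqref{mde}. The crucial point that both your failed attempts miss is that one must compare $z$ to $x$ \emph{one coordinate at a time}, each time replacing the $i$-th coordinate of the current point by $x_i$ using convexity along $\mathbf{e}_i$; this is what forces the relevant orthant to have vertex at $x$ rather than at the origin.
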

\begin{proof}
Let   $\{r_n\}_{n\in\mathbb{N}}\subseteq (0,+\infty)$ be an increasing sequence converging to $r_\Omega$. Accordingly, there exists a sequence of points $\{x_n\}_{n\in\mathbb{N}}\subseteq \Omega$ such that
\[
B_{r_n}(x_n)\subseteq\Omega,\qquad \text{for every}\ n\in\mathbb{N}.
\]
By appealing to Remark \ref{rem:S1S2} with $\Sigma=B_{r_n}(x_n)$ and $j=1$, we get
\[
\Big(B_{r_n}(x_n)\cup \mathcal{R}_{\mathbf{e}_1}(B_{r_n}(x_{n})\Big)^{\rm ch}\subseteq \Omega,\qquad \text{for every}\ n\in\mathbb{N}.
\]
By observing that 
\[
\Big(B_{r_n}(x_n)\cup \mathcal{R}_{\mathbf{e}_1}(B_{r_n}(x_{n})\Big)^{\rm ch}\supseteq B_{r_n}(\Pi_{\mathbf{e}_1}(x_n)),
\]
we get
\[
B_{r_n}(\Pi_{\mathbf{e}_1}(x_n))\subseteq \Omega,\qquad \text{for every}\ n\in\mathbb{N}.
\]
We can repeat the above argument with the ball $B_{r_n}(\Pi_{\mathbf{e}_1}(x_n))$ and $\mathbf{e}_2$, in place of $B_{r_n}(x_n)$ and $\mathbf{e}_1$, by also getting that 
\[
B_{r_n}(\Pi_{\mathbf{e}_2}\circ \Pi_{\mathbf{e}_1}(x_n))\subseteq \Omega,\qquad \text{for every}\ n\in\mathbb{N}.
\]
By applying this scheme recursively with respect to every direction $\mathbf{e}_i$ for $i\in\{1,\dots, N\}$, we finally obtain that 
\[
B_{r_n}=B_{r_n}(\Pi_{\mathbf{e}_N}\circ\dots\circ \Pi_{\mathbf{e}_1}(x_n))\subseteq \Omega,\qquad \text{for every}\ n\in\mathbb{N}.
\]
By recalling that $\{r_n\}_{n\in\mathbb{N}}$ converges to $r_\Omega$ and that $\Omega\not=\mathbb{R}^N$, the previous result guarantees 
that $r_{\Omega}<\infty$ and $B_{r_\Omega}\subseteq \Omega$, as claimed.
\vskip.2cm\noindent
We now come to the proof of the measure density estimate \eqref{mde}. Let $x=(x_1,\cdots,x_N)\in \partial\Omega$, thanks to the symmetries of $\Omega$ we have
\[
|B_r(\overline{x})\setminus \Omega|=|B_r(x)\setminus\Omega|,
\]
where $\overline{x}=(|x_1|,\dots,|x_N|)\in\partial\Omega$. Thus, we can suppose without loss of generality that $x_i\ge 0$, for every $i\in\{1,\dots,N\}$.
We claim that 
\begin{equation}\label{ottanti}
\mathcal{C}_x:=\prod_{i=1}^N[x_i,+\infty) \subseteq \mathbb{R}^N\setminus\Omega.
\end{equation}
Observe that $\mathcal{C}_x$ is nothing but a {\it hyperoctant} in $\mathbb{R}^N$, with vertex at $x$.
The argument is similar to the one given in the first part of the proof.
By contradiction, we suppose that there exists \[
z=(z_1,\cdots,z_N)\in \Omega\cap\mathcal{C}_x.
\] 
By Remark \ref{rem:S1S2}, we get that $\Omega$ must contain the segment joining $z$ and $\mathcal{R}_{\mathbf{e}_1}(z)$. In other words, we have
\[
(t,z_2,\dots,z_N)\in \Omega,\qquad \text{for every}\ -z_1\le t\le z_1.
\]
In particular, since $z\in \mathcal{C}_x$, we have $z_1\ge x_1$ and thus with the choice $t=x_1$, we get
\[
(x_1,z_2,\dots,z_N)\in\Omega.
\] 
We can now iterate and repeat this argument in the other coordinate directions. This would give us $x=(x_1,x_2,\dots,x_N)\in\Omega$, which is a contradiction. 
Therefore \eqref{ottanti} holds true. As a consequence, for every $r>0$ we have
\[
|B_r(x)\setminus \Omega|\ge \left|B_r(x)\cap \mathcal{C}_x\right|= 2^{-N}\,|B_r(x)|,
\]
that proves the desired estimate. 
\end{proof}

Thanks to the geometric properties encoded in Lemma \ref{lemmainradSN}, we deduce the following expedient result. This is interesting in itself.
\begin{prop}
\label{prop:positivity}
Let $1<p<\infty$ and $q\ge p$ satisfying \eqref{pq}. Let $\Omega\in\mathfrak{S}^N$ be such that $\Omega\neq\mathbb R^N$. Then we have $\lambda_{p,q}(\Omega)>0$. More precisely, the following lower bound holds
\[
c\,\left(\frac{1}{r_\Omega}\right)^{p-N+N\,\frac{p}{q}}\le\lambda_{p,q}(\Omega),
\] 
for some $c=c(N,p,q)>0$.
\end{prop}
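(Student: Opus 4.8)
The plan is to reduce the statement, via Proposition~\ref{prop:positive}, to the single inequality
\[
\lambda_p(\Omega)\ \ge\ \frac{c(N,p)}{r_\Omega^{\,p}},
\]
and then to prove this bound by exploiting the measure density condition \eqref{mde} of Lemma~\ref{lemmainradSN} through a covering argument with a uniform grid of cubes. Once the displayed estimate is available, the case $q=p$ is exactly the assertion (the exponent $p-N+N\,p/q$ then equals $p$), while for $q>p$ Proposition~\ref{prop:positive} gives $\lambda_{p,q}(\Omega)\ge C_2\,\bigl(\lambda_p(\Omega)\bigr)^{1-\frac Np+\frac Nq}$; since the subcriticality condition \eqref{pq} is precisely what makes the exponent $1-\frac Np+\frac Nq$ strictly positive, raising $c/r_\Omega^{p}$ to that power produces exactly $(1/r_\Omega)^{p-N+N\,p/q}$, up to a constant depending only on $N,p,q$.

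For the eigenvalue bound I would argue as follows. By Lemma~\ref{lemmainradSN} we have $0<r_\Omega<\infty$ (positivity just because $\Omega$ is a nonempty open set), so $L:=4\,r_\Omega$ is well defined and finite. Fix $u\in C^\infty_0(\Omega)$, extended by $0$ to all of $\mathbb{R}^N$. Tile $\mathbb{R}^N$ by a grid $\{Q_k\}_k$ of half-open cubes of side $L$, and let $\widetilde Q_k$ be the cube concentric with $Q_k$ of side $2L$; the enlarged family $\{\widetilde Q_k\}_k$ has overlap bounded by a purely dimensional constant $C_N$. The crucial geometric point is that every $\widetilde Q_k$ contains a fixed proportion of the complement of $\Omega$: since $Q_k$ contains a ball of radius $L/2=2\,r_\Omega>r_\Omega$, it cannot be contained in $\Omega$; hence either $Q_k\subseteq\mathbb{R}^N\setminus\overline\Omega$, in which case $|\widetilde Q_k\setminus\Omega|\ge|Q_k|=2^{-N}\,|\widetilde Q_k|$, or there is a point $x\in Q_k\cap\partial\Omega$, and then $B_{L/2}(x)\subseteq\widetilde Q_k$ (because $|x-c_k|_\infty<L/2$ when $x\in Q_k$ and $c_k$ is the center of $Q_k$), so that \eqref{mde} yields
\[
|\widetilde Q_k\setminus\Omega|\ \ge\ |B_{L/2}(x)\setminus\Omega|\ \ge\ 2^{-N}\,|B_{L/2}(x)|\ =\ 2^{-N}\,\omega_N\,(L/2)^N,
\]
which is a fixed proportion of $|\widetilde Q_k|=(2L)^N$. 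In all cases $|\widetilde Q_k\setminus\Omega|\ge c_N'\,|\widetilde Q_k|$ with $c_N'$ dimensional.

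On each $\widetilde Q_k$ I then invoke the classical Poincar\'e inequality for Sobolev functions vanishing on a set of definite relative measure: if $|\{v=0\}\cap\widetilde Q_k|\ge c_N'\,|\widetilde Q_k|$, then $\int_{\widetilde Q_k}|v|^p\,dx\le C(N,p,c_N')\,(2L)^p\int_{\widetilde Q_k}|\nabla v|^p\,dx$ (this follows from the standard bound $\|v-v_{\widetilde Q_k}\|_{L^p(\widetilde Q_k)}\lesssim L\,\|\nabla v\|_{L^p(\widetilde Q_k)}$ together with $|v_{\widetilde Q_k}|^p\lesssim (c_N')^{-1}\,\fint_{\widetilde Q_k}|v-v_{\widetilde Q_k}|^p\,dx$; see e.g.\ \cite{Maz}). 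Applying this with $v=u$, which vanishes a.e.\ on $\widetilde Q_k\setminus\Omega$, and summing over $k$ — using that the $Q_k$ tile $\mathbb{R}^N$ on the function side and the bounded overlap of $\{\widetilde Q_k\}$ on the gradient side — gives
\[
\int_\Omega|u|^p\,dx\ =\ \sum_k\int_{Q_k}|u|^p\,dx\ \le\ \sum_k\int_{\widetilde Q_k}|u|^p\,dx\ \le\ C(N,p)\,(2L)^p\,C_N\int_{\mathbb{R}^N}|\nabla u|^p\,dx,
\]
that is $\int_\Omega|u|^p\,dx\le C(N,p)\,r_\Omega^p\int_\Omega|\nabla u|^p\,dx$. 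Taking the infimum over $u\in C^\infty_0(\Omega)$ yields $\lambda_p(\Omega)\ge c(N,p)/r_\Omega^p$, and the proposition follows. I expect the main obstacle to be organizational rather than analytical: the genuine content sits in Lemma~\ref{lemmainradSN}, which already provides both the finiteness of the inradius and the thickness of the complement, and once these are in hand everything is soft; the only points requiring care are the exact size ratios in the ball/cube inclusions (which fix the factor $4$ and the doubling of the cubes) and quoting the cube Poincar\'e inequality with its dimensionally explicit constant.
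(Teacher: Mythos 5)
Your proof is correct, but it takes a genuinely different route from the paper's. The paper's proof is essentially a one-line citation: after invoking Lemma~\ref{lemmainradSN} to get $r_\Omega<\infty$ and the measure density condition \eqref{mde}, it appeals to \cite[Theorem 15.4.1]{Maz} when $p>N$ and to \cite[Corollary 5.6]{BozBra2} when $1<p\le N$, outsourcing the quantitative Poincar\'e--Sobolev bound to those references. You instead give a self-contained argument: you first reduce to the single Poincar\'e inequality $\lambda_p(\Omega)\ge c(N,p)\,r_\Omega^{-p}$ via Proposition~\ref{prop:positive} (checking that the exponent $p\bigl(1-\tfrac Np+\tfrac Nq\bigr)=p-N+N\tfrac pq$ comes out right), and you then prove that inequality by tiling $\mathbb R^N$ with a grid of cubes of side $4r_\Omega$, using the inradius bound to ensure no grid cube fits inside $\Omega$, the density condition \eqref{mde} to ensure each doubled cube contains a definite proportion of $\mathbb R^N\setminus\Omega$, and the standard Poincar\'e inequality for functions vanishing on a set of positive relative measure. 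This trades a short proof-by-reference for an elementary covering argument that treats $p<N$, $p=N$ and $p>N$ uniformly; the size ratios (inscribed ball of radius $2r_\Omega$ in $Q_k$, $B_{L/2}(x)\subseteq\widetilde Q_k$, bounded overlap of the doubled cubes) all check out, and the case split ($Q_k$ disjoint from $\Omega$ / meeting $\partial\Omega$ / inside $\Omega$ impossible) is exhaustive. The only blemish is a harmless typo in your cube Poincar\'e estimate: from $|v_{\widetilde Q_k}|\,|Z|\le\int_{\widetilde Q_k}|v-v_{\widetilde Q_k}|\,dx$ one gets $|v_{\widetilde Q_k}|^p\lesssim (c_N')^{-p}\fint_{\widetilde Q_k}|v-v_{\widetilde Q_k}|^p\,dx$ rather than $(c_N')^{-1}$, which only changes the value of the dimensional constant.
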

\begin{proof}
By applying Lemma \ref{lemmainradSN}, we know that $r_\Omega<+\infty$. For $p>N$, it is then sufficient to appeal to \cite[Theorem 15.4.1]{Maz} (see also \cite[Corollary 5.9]{BraPriZag2}). 
For $1<p\le N$, we can simply evoke  \cite[Corollary 5.6]{BozBra2}, thanks to the measure density condition \eqref{mde}. 
\end{proof}
In Section \ref{sec:7} we will need the following further definitions, aimed at describing the ``local geometry at infinity'' of a Steiner symmetric set. Let $\Omega\in\mathfrak{S}^N$, for every $i\in\{1,\dots,N\}$ we define
\[
\Omega_{i,t}=\{x\in\Omega\,:\,\langle x,\mathbf{e}_i\rangle =t\},\qquad\text{for}\ t\in\mathbb{R}.
\]
These are the sections of $\Omega$ which are orthogonal to $\mathbf{e}_i$. Observe that these are open subsets of the hyperplane $\{x\,:\,\langle x,\mathbf{e}_i\rangle =t\}\simeq\mathbb{R}^{N-1}$, with respect to the induced topology.
Accordingly, for every $i\in\{1,\dots,N\}$ we define
\begin{equation}
\label{inradius_sect}\
r_{\Omega_{i,t}}=\sup\Big\{r>0\,:\, \exists\ \text{a ball}\ B_r(x_0)\subseteq\mathbb{R}^N\ \text{with }x_0\in\Omega_{i,t}\  \text{s.\,t.}\ (B_r(x_0))_{i,t}\subseteq\Omega_{i,t}\Big\}
,\quad \text{for}\ t\in\mathbb{R}.
\end{equation}
The quantity $r_{\Omega_{i,t}}$ is nothing but the inradius of the section $\Omega_{i,t}$. 
Thanks to Steiner symmetry, it is not difficult to see that it enjoys the following properties.
\begin{lm}
\label{lm:luca}
Let $\Omega\in\mathfrak{S}^N$ be an open set such that $\Omega\not=\mathbb{R}^N$. Then for every $i\in\{1,\dots,N\}$
\begin{equation}
\label{eq:rit}
r_{\Omega_{i,t}}=r_{\Omega_{i,-t}},\ \text{for every}\ t\in\mathbb{R} \qquad\text{and}\qquad r_{\Omega_{i,t}}\le r_{\Omega_{i,s}},\ \text{for every}\ 0\le s<t.
\end{equation}
Moreover, 
there exists a constant $\tau_\Omega>0$ such that
\begin{equation}
\label{eq:tauomega}
\max_{i\in\{1,\dots, N\}}r_{\Omega_{i,t}}<+\infty,\quad \text{for every}\ t\ge \tau_\Omega.
\end{equation}
\end{lm}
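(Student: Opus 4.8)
The plan is to establish the three assertions in the order they are stated, each by exploiting a different aspect of Steiner symmetry. For the symmetry relation $r_{\Omega_{i,t}}=r_{\Omega_{i,-t}}$, I would use property (S1): since $\Omega$ is symmetric with respect to $\langle\mathbf{e}_i\rangle^\perp$, the reflection $\mathcal{R}_{\mathbf{e}_i}$ maps $\Omega$ onto itself, and it clearly maps the section $\Omega_{i,t}$ isometrically onto $\Omega_{i,-t}$. Any admissible ball $B_r(x_0)$ in the definition \eqref{inradius_sect} for $\Omega_{i,t}$ is carried by $\mathcal{R}_{\mathbf{e}_i}$ to an admissible ball $B_r(\mathcal{R}_{\mathbf{e}_i}(x_0))$ for $\Omega_{i,-t}$ of the same radius (note $(B_r(x_0))_{i,t}$ is mapped to $(B_r(\mathcal{R}_{\mathbf{e}_i}(x_0)))_{i,-t}$), and vice versa; hence the two suprema coincide.

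For the monotonicity $r_{\Omega_{i,t}}\le r_{\Omega_{i,s}}$ when $0\le s<t$, I would argue that an admissible ball for the section at height $t$ can be "slid" toward the hyperplane of symmetry. Fix a ball $B_r(x_0)$ with $x_0\in\Omega_{i,t}$ and $(B_r(x_0))_{i,t}\subseteq\Omega_{i,t}$. By (S1) the reflected ball $B_r(\mathcal{R}_{\mathbf{e}_i}(x_0))$ is likewise admissible for $\Omega_{i,-t}\subseteq\Omega$, and by Remark \ref{rem:S1S2} (applied with the convex set $\Sigma=B_r(x_0)$ and $j=i$), $\Omega$ contains the convex hull of $B_r(x_0)\cup\mathcal{R}_{\mathbf{e}_i}(B_r(x_0))$. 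That convex hull contains the translated ball $B_r(x_0 - (t-s)\mathbf{e}_i)$, whose center lies on $\Omega_{i,s}$ since $0\le s<t$ means the shift keeps us between $-t$ and $t$. Its section at height $s$ is a full $(N-1)$-dimensional disc of radius $\sqrt{r^2-0}=r$ contained in $\Omega_{i,s}$; more carefully, the relevant section has radius $r$ when $s=t$ is replaced by the slid center, and in general the section of $B_r(x_0-(t-s)\mathbf{e}_i)$ at height $s$ is the disc of radius $r$ centered at $\Pi_{\mathbf{e}_i}(x_0)$, which lies in $\Omega_{i,s}$. Taking the supremum over $r$ gives $r_{\Omega_{i,t}}\le r_{\Omega_{i,s}}$.

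For \eqref{eq:tauomega}, I would combine the finiteness of the global inradius with the monotonicity just proved. By Lemma \ref{lemmainradSN}, $r_\Omega<+\infty$; I claim $r_{\Omega_{i,t}}\le r_\Omega$ for all $t$ whenever $r_{\Omega_{i,t}}>0$, since an admissible ball for the section is in particular a ball of that radius contained in $\Omega$. If some section at some positive height is empty or has zero inradius, monotonicity forces all sections at larger heights to be empty too, so trivially $r_{\Omega_{i,t}}=0<+\infty$ there. Thus $\max_i r_{\Omega_{i,t}}\le r_\Omega<+\infty$ for every $t$, and one may take $\tau_\Omega$ to be any positive number (for definiteness, $\tau_\Omega=1$, or $\tau_\Omega=r_\Omega$). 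The main subtlety to handle carefully is the geometry of the sliding argument in the second step — making sure that the convex hull of the ball and its reflection really does contain the translated ball with center on $\Omega_{i,s}$ and that this translated ball's central section is the full disc of radius $r$; this is where Remark \ref{rem:S1S2} is essential, and one must be careful about the case $s=0$, where the shifted center lands exactly on the symmetry hyperplane.
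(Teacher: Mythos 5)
Your treatment of the reflection symmetry $r_{\Omega_{i,t}}=r_{\Omega_{i,-t}}$ is correct. The other two steps contain genuine errors, one minor and one serious.

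\textbf{Monotonicity.} You invoke Remark \ref{rem:S1S2} with $\Sigma=B_r(x_0)$, but that remark requires $\Sigma\subseteq\Omega$, and the admissibility condition in \eqref{inradius_sect} does \emph{not} give you $B_r(x_0)\subseteq\Omega$ --- it only gives $(B_r(x_0))_{i,t}\subseteq\Omega_{i,t}$. (Think of a thin slab $\mathbb{R}^{N-1}\times(-\delta,\delta)$: the section at $x_N=0$ is the whole hyperplane, so any $(N-1)$--disc is admissible, but no $N$--ball of radius $r>\delta$ lies in the slab.) The fix is to apply the remark to the convex set $\Sigma=(B_r(x_0))_{i,t}$, which \emph{is} a subset of $\Omega$; the convex hull of this disc and its reflection is a solid cylinder contained in $\Omega$, and its level-$s$ slice for $|s|\le t$ is again a disc of radius $r$ centered at $\Pi_{\mathbf{e}_i}(x_0)+s\,\mathbf{e}_i\in\Omega_{i,s}$. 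With this correction your argument for the monotonicity is fine and is what the paper tacitly does.

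\textbf{The bound $r_{\Omega_{i,t}}\le r_\Omega$ is false, and with it your proof of \eqref{eq:tauomega}.} The admissible ball in \eqref{inradius_sect} is not a ball inside $\Omega$; only its cross-section has to lie in $\Omega$. Taking $\Omega=\mathbb{R}^{N-1}\times(-1,1)$ one has $r_\Omega=1$ but $\Omega_{N,0}=\mathbb{R}^{N-1}\times\{0\}$, so $r_{\Omega_{N,0}}=+\infty$. Hence $\max_i r_{\Omega_{i,t}}$ need not be bounded by $r_\Omega$, and one certainly cannot take $\tau_\Omega$ arbitrary. The correct route, which the paper takes, is the following: since $\Omega\neq\mathbb{R}^N$, for each $i$ there is some $t_i$ with $\Omega_{i,t_i}\neq\{x:\,\langle x,\mathbf{e}_i\rangle=t_i\}$ (otherwise a section-by-section argument would give $\Omega=\mathbb{R}^N$). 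The slice $\Omega_{i,t_i}$, viewed in $\mathbb{R}^{N-1}$, belongs to $\mathfrak{S}^{N-1}$ and is a proper subset, so Lemma \ref{lemmainradSN} applied in dimension $N-1$ gives $r_{\Omega_{i,t_i}}<+\infty$. By the symmetry you may assume $t_i\ge 0$, and then the monotonicity gives $r_{\Omega_{i,t}}\le r_{\Omega_{i,t_i}}<+\infty$ for $t>t_i$; taking $\tau_\Omega>\max_i t_i$ concludes. Note that this is the only place where the hypothesis $\Omega\neq\mathbb{R}^N$ is actually used in a substantive way, and your version bypasses it entirely --- that is the telltale sign something is off.
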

\begin{proof}
The properties \eqref{eq:rit} follow quite easily from (S1) and (S2).
\par
In order to prove \eqref{eq:tauomega}, we observe that if $\Omega_{i,t}=\{x\in\mathbb{R}^N\,:\,\langle x,\mathbf{e}_i\rangle =t\}$ for every $t\in\mathbb{R}$, then $\Omega$ would coincide with the whole space $\mathbb{R}^N$. Thus, there must exist $t_i\in\mathbb{R}$ such that $\Omega_{i,t}\not=\{x\in\mathbb{R}^N\, :\,\langle x,\mathbf{e}_i\rangle =t\}$. With a slight abuse of notation, we observe that $\Omega_{i,t}\in\mathfrak{S}^{N-1}$.
By applying Lemma \ref{lemmainradSN} in dimension $N-1$, we get that $\Omega_{i,t}$ has a finite inradius and a maximal ball is centered at the point $t_i\,\mathbf{e}_i$.
In light of the first property in \eqref{eq:rit}, we can suppose that $t_i\ge 0$, while by the second property in \eqref{eq:rit} we get that 
\[
r_{\Omega_{i,t}}\le r_{\Omega_{i,t_i}},\ \text{for every}\ 0\le t_i<t.
\]
By repeating this argument for every direction, we get the conclusion.
\end{proof}

\subsection{Functions}
Let $1\leq p<\infty$, let $E\subseteq \mathbb R^N$ a measurable set and let  $u\in L^p(E)$ be a non-negative
 function. We extend $u$ by setting $u=0$ in $\mathbb{R}^N\setminus E$. 
We can define the {\it Steiner symmetrization of $u$ with respect to the direction} $\mathbf{e}\in\mathbb{S}^{N-1}$ as  the non-negative function $\mathcal{S}_{\mathbf{e}}(u)$ given  by 
\[
\mathcal{S}_{\mathbf{e}}(u)(y)=\int_0^{+\infty} 1_{\mathcal{S}_{\mathbf{e}}(E^+_u(t))} (y)\, dt,
\]
 where, for every $t\geq 0$, we indicated
 \[
 E^+_u(t)=\{x\in E\, :\, u(x)>t\},
 \] 
 and 
 $1_A$ stands for  the characteristic function  of a set $A$.
  We note that: 
\begin{itemize}
\item $\mathcal{S}_{\mathbf{e}}(u)=0$ almost everywhere in $\mathbb{R}^N\setminus \mathcal{S}_{\mathbf{e}}(E)$;
\vskip.2cm
\item by exploiting the definition above and the layer cake representation, it is easy to show that 
 for almost every $t\in \mathbb R$, it holds 
\begin{equation}\label{equimeas}
\mathcal{S}_{\mathbf{e}}(E)^+_{\mathcal{S}_\mathbf{e}(u)}(t)=\{y\in \mathcal{S}_{\mathbf{e}}(E)\, :\, \mathcal{S}_\mathbf{e}(u)(y)>t\}=\mathcal{S}_{\mathbf{e}}\left(E_u^+(t)\right).
\end{equation}
Hence, $\mathcal{S}_{\mathbf{e}}(u)$ is a non-negative measurable function on $\mathbb{R}^N$ which is equimeasurable with $u$. 
In particular, this implies that  $\mathcal{S}_{\mathbf{e}}(u)\in L^p(\mathbb{R}^N)$ with   
\begin{equation}\label{PZ1}
\| \mathcal{S}_{\mathbf{e}}(u)\|_{L^p(\mathbb{R}^N)}=\|u \|_{L^p(\mathbb{R}^N)};
\end{equation}
\item   if $v$ is a Lipschitz function compactly supported in an open set  $\Omega$, then $\mathcal{S}_{\mathbf{e}}(v)$ is a Lipschitz function with compact support in $\mathcal{S}_{\mathbf{e}}(\Omega)$, as well. 
\end{itemize} 
Finally, we recall that,  if $u\in W^{1,p}(\mathbb{R}^N)$ with $1< p<\infty$, then  the celebrated P\`olya-Sz\H{e}go inequality holds true 
\begin{equation}
\label{PZ2}
\int_{\mathbb{R}^N}|\nabla \mathcal{S}_{\mathbf{e}}(u)|^p\,dx\leq\int_{\mathbb{R}^N}|\nabla u|^p\,dx.
\end{equation}
For a proof, we refer, for example, to  \cite[Proposition 3.1]{Bu}. 
\par
The following statement is well-known, but we have not been able to detect a proof in the literature. Due to its pivotal importance, we eventually decided to include it.
\begin{lm}
\label{lm:zeropres}
Let $1<p<\infty$ and let $\Omega\subseteq\mathbb{R}^N$ be an open set. For every $\mathbf{e}\in \mathbb{S}^{N-1}$ and every $u\in W^{1,p}_0(\Omega)$, we have $\mathcal{S}_{\mathbf{e}}(u)\in W^{1,p}_0(\mathcal{S}_\mathbf{e}(\Omega))$.
\end{lm}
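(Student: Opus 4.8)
The plan is to approximate $u\in W^{1,p}_0(\Omega)$ by smooth compactly supported functions, symmetrize at the level of these approximations, and then pass to the limit, using the fact that Steiner symmetrization is nonexpansive in the relevant norms. The key technical point is that symmetrization does not commute with sign, so one should split $u=u^+-u^-$ and deal with each part separately; since Steiner symmetrization is defined here only for non-negative functions, it is natural to set $\mathcal{S}_{\mathbf{e}}(u):=\mathcal{S}_{\mathbf{e}}(u^+)-\mathcal{S}_{\mathbf{e}}(u^-)$ (or, if the convention in the paper is that $u\geq 0$ throughout, simply work with $u$ itself). I will describe the non-negative case, the general case being identical up to this bookkeeping.

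First I would pick $\varphi_k\in C^\infty_0(\Omega)$ with $\varphi_k\to u$ in $W^{1,p}(\mathbb{R}^N)$; replacing $\varphi_k$ by $|\varphi_k|$ if necessary (which still converges to $u=|u|$ in $W^{1,p}$, since $u\geq 0$, by the chain rule for the absolute value), we may assume $\varphi_k\geq 0$. Each $\varphi_k$ is Lipschitz and compactly supported in $\Omega$, hence by the bullet points recalled just before the statement, $\mathcal{S}_{\mathbf{e}}(\varphi_k)$ is Lipschitz with compact support in $\mathcal{S}_{\mathbf{e}}(\Omega)$; in particular $\mathcal{S}_{\mathbf{e}}(\varphi_k)\in W^{1,p}_0(\mathcal{S}_{\mathbf{e}}(\Omega))$. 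Next I claim $\{\mathcal{S}_{\mathbf{e}}(\varphi_k)\}_k$ is a Cauchy sequence in $W^{1,p}(\mathbb{R}^N)$. For the $L^p$ part this is immediate: by equimeasurability \eqref{PZ1} and the (well-known) nonexpansivity of Steiner symmetrization in $L^p$,
\[
\|\mathcal{S}_{\mathbf{e}}(\varphi_k)-\mathcal{S}_{\mathbf{e}}(\varphi_j)\|_{L^p(\mathbb{R}^N)}\le \|\varphi_k-\varphi_j\|_{L^p(\mathbb{R}^N)}.
\]
For the gradient part one cannot symmetrize a difference, so instead I would use the classical fact that Steiner symmetrization is continuous on $W^{1,p}(\mathbb{R}^N)$ for $1<p<\infty$ — equivalently, that $\varphi_k\to u$ in $W^{1,p}$ forces $\mathcal{S}_{\mathbf{e}}(\varphi_k)\to \mathcal{S}_{\mathbf{e}}(u)$ in $W^{1,p}$. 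This is where the heart of the argument lies.

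To prove this continuity I would argue as follows. By the P\'olya–Szeg\H{o} inequality \eqref{PZ2}, $\{\mathcal{S}_{\mathbf{e}}(\varphi_k)\}_k$ is bounded in $W^{1,p}(\mathbb{R}^N)$, so a subsequence converges weakly in $W^{1,p}$ and strongly in $L^p_{\mathrm{loc}}$ (and pointwise a.e.) to some $w$. Since $\mathcal{S}_{\mathbf{e}}$ is $L^p$-continuous on non-negative functions (again by equimeasurability and the integral representation, passing to the limit in $\mathcal{S}_{\mathbf{e}}(\varphi_k)(y)=\int_0^\infty 1_{\mathcal{S}_{\mathbf{e}}(E^+_{\varphi_k}(t))}(y)\,dt$ via the convergence of the level sets), we identify $w=\mathcal{S}_{\mathbf{e}}(u)$; in particular the whole sequence converges, and $\mathcal{S}_{\mathbf{e}}(u)\in W^{1,p}(\mathbb{R}^N)$ with $\|\nabla \mathcal{S}_{\mathbf{e}}(u)\|_{L^p}\le \liminf_k \|\nabla \mathcal{S}_{\mathbf{e}}(\varphi_k)\|_{L^p}\le \liminf_k\|\nabla\varphi_k\|_{L^p}=\|\nabla u\|_{L^p}$ — i.e.\ \eqref{PZ2} extends to $u$. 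To upgrade weak to strong convergence of the gradients, I would invoke the equality case / lower semicontinuity structure: by \eqref{PZ2} applied to $u$ and weak lower semicontinuity,
\[
\|\nabla \mathcal{S}_{\mathbf{e}}(u)\|_{L^p(\mathbb{R}^N)}\le \liminf_k \|\nabla \mathcal{S}_{\mathbf{e}}(\varphi_k)\|_{L^p(\mathbb{R}^N)}\le \limsup_k\|\nabla\varphi_k\|_{L^p(\mathbb{R}^N)}=\|\nabla u\|_{L^p(\mathbb{R}^N)}.
\]
If it were the case that $\|\nabla u\|_{L^p}=\|\nabla \mathcal{S}_{\mathbf{e}}(u)\|_{L^p}$ we would get norm convergence and hence, by uniform convexity of $L^p$, strong convergence of $\nabla\mathcal{S}_{\mathbf{e}}(\varphi_k)$; in general this equality fails, so instead I would run the same approximation argument directly with a recovery sequence. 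Concretely, since $\mathcal{S}_{\mathbf{e}}(u)\in W^{1,p}(\mathbb{R}^N)$, its truncations and mollifications produce $\psi_k\in C^\infty_0(\mathbb{R}^N)$ with $\psi_k\to\mathcal{S}_{\mathbf{e}}(u)$ in $W^{1,p}$; the only remaining point is that these $\psi_k$ can be taken supported in $\mathcal{S}_{\mathbf{e}}(\Omega)$.

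For that last point — which together with the previous paragraph completes the proof, since it exhibits $\mathcal{S}_{\mathbf{e}}(u)$ as a $W^{1,p}$-limit of functions in $C^\infty_0(\mathcal{S}_{\mathbf{e}}(\Omega))$, hence an element of $W^{1,p}_0(\mathcal{S}_{\mathbf{e}}(\Omega))$ — I would instead avoid reconstructing a recovery sequence from scratch and argue by containment of supports. Observe that $\mathrm{supp}\,\mathcal{S}_{\mathbf{e}}(\varphi_k)\subseteq \overline{\mathcal{S}_{\mathbf{e}}(\{\varphi_k\neq 0\})}$, and since $\{\varphi_k\neq 0\}$ is a compact subset of $\Omega$ contained in some $\Omega'\Subset\Omega$ with $\overline{\Omega'}\subseteq\Omega$, one has $\mathcal{S}_{\mathbf{e}}(\{\varphi_k\neq 0\})\subseteq \mathcal{S}_{\mathbf{e}}(\Omega')$, whose closure sits inside $\mathcal{S}_{\mathbf{e}}(\Omega)$ provided we chose $\Omega'$ with compact closure in $\Omega$ — here using that $\mathcal{S}_{\mathbf{e}}$ of a relatively compact subset of an open set has closure in the symmetrization of that open set, a consequence of the monotonicity $E_1\subseteq E_2\Rightarrow \mathcal{S}_{\mathbf{e}}(E_1)\subseteq\mathcal{S}_{\mathbf{e}}(E_2)$ and of $\mathcal{S}_{\mathbf{e}}(\Omega)$ being open. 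Thus each $\mathcal{S}_{\mathbf{e}}(\varphi_k)$ is a Lipschitz function with compact support inside $\mathcal{S}_{\mathbf{e}}(\Omega)$; mollifying it slightly (on a scale smaller than the distance of its support to $\partial\mathcal{S}_{\mathbf{e}}(\Omega)$) yields a genuinely $C^\infty_0(\mathcal{S}_{\mathbf{e}}(\Omega))$ function within $W^{1,p}$-distance $1/k$ of it; combining with the $W^{1,p}$-convergence $\mathcal{S}_{\mathbf{e}}(\varphi_k)\to\mathcal{S}_{\mathbf{e}}(u)$ established above gives a sequence in $C^\infty_0(\mathcal{S}_{\mathbf{e}}(\Omega))$ converging to $\mathcal{S}_{\mathbf{e}}(u)$ in $W^{1,p}(\mathbb{R}^N)$, proving $\mathcal{S}_{\mathbf{e}}(u)\in W^{1,p}_0(\mathcal{S}_{\mathbf{e}}(\Omega))$.

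I expect the main obstacle to be the $W^{1,p}$-convergence $\mathcal{S}_{\mathbf{e}}(\varphi_k)\to\mathcal{S}_{\mathbf{e}}(u)$, i.e.\ the continuity of Steiner symmetrization in $W^{1,p}$. Weak convergence and the bound coming from \eqref{PZ2} are easy; identifying the limit via level-set convergence is routine; the delicate part is strong convergence of the gradients, for which the uniform-convexity/norm-convergence trick works cleanly only in the equality case of P\'olya–Szeg\H{o}. The robust way around this — and the one I would adopt in the write-up — is the support-containment argument of the last paragraph: it never needs strong $W^{1,p}$-convergence of the symmetrized approximants at all, only the (elementary) $L^p$-convergence together with the uniform $W^{1,p}$-bound from \eqref{PZ2}, which by reflexivity of $W^{1,p}$ and lower semicontinuity already places $\mathcal{S}_{\mathbf{e}}(u)$ in $W^{1,p}(\mathbb{R}^N)$, after which the density of $C^\infty_0(\mathcal{S}_{\mathbf{e}}(\Omega))$-approximants of the symmetrized pieces closes the argument.
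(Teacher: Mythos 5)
Your proposal assembles the same ingredients as the paper's proof — the $L^p$-contractivity (Crandall--Tartar), the P\'olya--Szeg\H{o} bound \eqref{PZ2} on the gradients, and the observation that each symmetrized smooth function is Lipschitz with compact support in $\mathcal{S}_{\mathbf{e}}(\Omega)$ and hence lies in $W^{1,p}_0(\mathcal{S}_{\mathbf{e}}(\Omega))$ — and your final paragraph does arrive at essentially the paper's argument. The difficulty is that the middle of the write-up is a detour that creates an apparent circularity. In paragraph four you conclude by ``combining with the $W^{1,p}$-convergence $\mathcal{S}_{\mathbf{e}}(\varphi_k)\to\mathcal{S}_{\mathbf{e}}(u)$ established above'', but strong $W^{1,p}$-convergence was \emph{not} established; you yourself note in paragraph three that the uniform-convexity trick fails in general, and Burchard's theorem (that $\mathcal{S}_{\mathbf{e}}$ is strongly continuous on $W^{1,p}$ for $1<p<\infty$) is a deep result you should not want to invoke here. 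Your last paragraph then retracts the need for strong convergence, which is correct, but the closing step is left implicit: what actually finishes the argument is not ``density of $C^\infty_0$-approximants'' but the fact that $W^{1,p}_0(\mathcal{S}_{\mathbf{e}}(\Omega))$ is a \emph{closed linear subspace} of $W^{1,p}(\mathbb{R}^N)$ and hence weakly closed. That single sentence is the punchline of the paper's proof, and without it your argument does not close: strong $L^p$-convergence of $\mathcal{S}_{\mathbf{e}}(\varphi_k)$ plus the uniform gradient bound gives weak convergence in $W^{1,p}(\mathbb{R}^N)$, and weak closedness of the subspace is precisely what places the limit $\mathcal{S}_{\mathbf{e}}(u)$ in $W^{1,p}_0(\mathcal{S}_{\mathbf{e}}(\Omega))$. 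The mollification/support-containment machinery in paragraph four is then entirely unnecessary. Finally, your remark about splitting $u=u^+-u^-$ is reasonable bookkeeping, but note the paper's $\mathcal{S}_{\mathbf{e}}$ is defined only for non-negative functions, and in its applications the lemma is used exactly for such functions; introducing $\mathcal{S}_{\mathbf{e}}(u^+)-\mathcal{S}_{\mathbf{e}}(u^-)$ is a nonstandard extension that the paper does not intend.
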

\begin{proof}
By definition, there exists a sequence $\{u_n\}_{n\in\mathbb{N}}\subseteq C_{0}^\infty(\Omega)$ 
\[
\lim_{n\to\infty} \|u_n-u\|_{W^{1,p}(\Omega)}=0.
\] 
We extend all the functions to be $0$ outside $\Omega$. 
The {\it contractivity property} of Steiner symmetrization (see \cite[Propositions 1-2]{CT}) gives   
\[
\|\mathcal{S}_{\mathbf{e}}(u_n)-\mathcal{S}_{\mathbf{e}}(u)\|_{L^p(\mathbb{R}^N)}\le \|u_n-u\|_{L^p(\mathbb{R}^N)}, \qquad \text{for every}\ n\in \mathbb N.
\]
Hence $\mathcal{S}_{\mathbf{e}}(u_n)$ converges to $\mathcal{S}_\mathbf{e}(u)$ strongly in $L^p(\mathbb{R}^N)$. Moreover, \eqref{PZ2} gives that $\mathcal{S}_\mathbf{e}(u)\in W^{1,p}(\mathbb{R}^N)$. Still by \eqref{PZ2} and the boundedness of $\{\nabla u_n\}_{n\in \mathbb N}$, we also get that $\{\nabla \mathcal{S}_{\mathbf{e}}(u_n)\}_{n\in \mathbb N}$ is bounded in $L^p(\mathbb{R}^N)$.
Thus, we easily obtain that $\{\nabla \mathcal{S}_\mathbf{e}(u_n)\}_{n\in \mathbb N}$ weakly converges in $L^p(\mathbb{R}^N)$ to $\nabla \mathcal{S}_\mathbf{e}(u)$. Indeed, by the definition of weak gradient and the strong convergence of $\{\mathcal{S}_\mathbf{e}(u_n)\}_{n\in\mathbb{N}}$ in $L^p(\mathbb{R}^N)$, we have for every vector field $\phi\in C^\infty_0(\mathbb{R}^N;\mathbb{R}^N)$ 
\[
\begin{split}
\int_{\mathbb{R}^N} \langle\nabla \mathcal{S}_\mathbf{e}(u),\phi\rangle\,dx=-\int_{\mathbb{R}^N} \mathcal{S}_\mathbf{e}(u)\,\mathrm{div}\phi\,dx&=-\lim_{n\to\infty}\int_{\mathbb{R}^N} \mathcal{S}_\mathbf{e}(u_n)\,\mathrm{div}\phi\,dx\\
&=\lim_{n\to\infty}\int_{\mathbb{R}^N} \langle \nabla \mathcal{S}_\mathbf{e}(u_n),\phi\rangle\,dx.
\end{split}
\]
Note that each function $\mathcal{S}_\mathbf{e}(u_n)$ is a Lipschitz function with compact support in $\mathcal{S}_{\mathbf{e}}(\Omega)$, thus we have in particular that  $\{\mathcal{S}_{\mathbf{e}}(u_n)\}_{n\in\mathbb{N}}\subseteq W_0^{1,p}(\mathcal{S}_{\mathbf{e}}(\Omega)) $. Hence, we can conclude that the function $\mathcal{S}_{\mathbf{e}}(u)$ is the weak limit of a sequence belonging to $W_0^{1,p}(\mathcal{S}_{\mathbf{e}}(\Omega)) $. The latter being weakly closed, we get the conclusion.
\end{proof}
In the next section we will work with functions $u\in L^p(\Omega)$ satisfying the condition
\[
\int_\Omega |x|\,|u|^p\,dx<+\infty.
\] 
We will show that the previous integral does not increase under the Steiner symmetrization. At first, we prove the following preliminary lemma. We also refer to \cite[Lemma 2.11]{Vol} for a stronger result in the case $\alpha=2$.
\begin{lm}\label{lm:simV}
Let $A\subseteq\mathbb{R}^N$ be a measurable set.   For every $\alpha>0$ and every $\mathbf{e}\in \mathbb{S}^{N-1}$, we have 
\[
\int_A |x|^\alpha\,dx\ge \int_{\mathcal{S}_{\mathbf{e}}(A)}|x|^\alpha\,dx.
\]
\end{lm}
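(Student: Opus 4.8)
The plan is to reduce Lemma~\ref{lm:simV} to an elementary one-dimensional rearrangement inequality by slicing $\mathbb{R}^N$ along the direction $\mathbf{e}$. I would first decompose $\mathbb{R}^N = \langle\mathbf{e}\rangle^\perp \oplus \mathbb{R}\,\mathbf{e}$, writing each point uniquely as $x = y + s\,\mathbf{e}$ with $y \in \langle\mathbf{e}\rangle^\perp$ and $s \in \mathbb{R}$, so that $|x|^2 = |y|^2 + s^2$. For $y \in \langle\mathbf{e}\rangle^\perp$, put $m(y) := \mathcal{H}^1\big(\{s \in \mathbb{R} : y + s\,\mathbf{e} \in A\}\big) \in [0, +\infty]$. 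Directly from the definition of $\mathcal{S}_\mathbf{e}(A)$ recalled in this section, one has $\{s \in \mathbb{R} : y + s\,\mathbf{e} \in \mathcal{S}_\mathbf{e}(A)\} = \big(-m(y)/2,\, m(y)/2\big)$ for every $y$ (to be read as $\mathbb{R}$ if $m(y) = +\infty$, and as $\emptyset$ if $m(y) = 0$). Since $A$ and $\mathcal{S}_\mathbf{e}(A)$ are Lebesgue measurable and $y \mapsto m(y)$ is measurable by Tonelli's theorem, Fubini's theorem applied in the splitting $dx = ds\,d\mathcal{H}^{N-1}(y)$ shows that it suffices to prove, for a.e.\ $y \in \langle\mathbf{e}\rangle^\perp$, the one-dimensional inequality
\[
\int_{\{s\,:\,y+s\mathbf{e}\in A\}} \big(|y|^2 + s^2\big)^{\alpha/2}\,ds \ \ge\ \int_{-m(y)/2}^{m(y)/2} \big(|y|^2 + s^2\big)^{\alpha/2}\,ds .
\]

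So the heart of the matter is the following statement: if $E \subseteq \mathbb{R}$ is measurable, $m := \mathcal{H}^1(E)$, $c \ge 0$, and $g(s) := (c^2 + s^2)^{\alpha/2}$, then $\int_E g\,ds \ge \int_{-m/2}^{m/2} g\,ds$. This is a form of the bathtub principle: $g$ is even and nondecreasing on $[0,+\infty)$, and $(-m/2, m/2)$ is a sublevel set of $g$ of measure $m$. I would first dispatch the case $m = +\infty$: the right-hand side equals $\int_\mathbb{R} g = +\infty$ (as $\alpha > 0$), and so is the left-hand side, since $\int_E g \ge \int_{E\cap\{|s|\ge 1\}} g \ge \mathcal{H}^1\big(E \cap \{|s|\ge 1\}\big) = +\infty$. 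For $m < +\infty$, set $I := (-m/2, m/2)$; from $\mathcal{H}^1(E) = \mathcal{H}^1(I) = m$ one gets $\mathcal{H}^1(I \setminus E) = \mathcal{H}^1(E \setminus I)$, while the monotonicity of $g$ on $[0,+\infty)$, together with $|s| \le m/2$ a.e.\ on $I \setminus E$ and $|s| \ge m/2$ a.e.\ on $E \setminus I$, yields
\[
\int_{I \setminus E} g\,ds \ \le\ g\big(m/2\big)\,\mathcal{H}^1(I \setminus E) \ =\ g\big(m/2\big)\,\mathcal{H}^1(E \setminus I) \ \le\ \int_{E \setminus I} g\,ds .
\]
Adding $\int_{E \cap I} g\,ds$ to the extreme sides gives $\int_I g\,ds \le \int_E g\,ds$, which is the desired inequality.

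I do not expect any serious obstacle here. The two points that call for some care are the measurability bookkeeping that legitimizes the use of Fubini's theorem --- standard, and partly recorded in this section --- and the handling of slices $\{s : y + s\mathbf{e} \in A\}$ of infinite length, for which both sides of the sliced inequality are simply $+\infty$. One could instead invoke the monotonicity of the symmetric decreasing rearrangement in one variable, but the direct symmetric-difference estimate above is shorter and fully self-contained.
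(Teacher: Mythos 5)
Your proof is correct, but it takes a genuinely different route from the paper's. The paper first reduces (via Markov's inequality) to the case $|A|<+\infty$, then applies the layer-cake formula $\int_A|x|^\alpha\,dx=\int_0^\infty\big[|A|-|A\cap B_{\tau^{1/\alpha}}|\big]\,d\tau$ and uses the two set-theoretic facts that Steiner symmetrization preserves measure and that $\mathcal{S}_\mathbf{e}(A\cap B_r)\subseteq\mathcal{S}_\mathbf{e}(A)\cap B_r$ (since $B_r$ is already Steiner symmetric); the same layer-cake formula applied to $\mathcal{S}_\mathbf{e}(A)$ then closes the argument. You instead slice along the direction $\mathbf{e}$ via Fubini--Tonelli and settle the matter with a one-dimensional bathtub-principle estimate on each line, handling infinite-length fibers by hand. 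Your version buys a more transparent treatment of the degenerate cases (both $|A|=+\infty$ and infinite fibers are disposed of directly, rather than being side-stepped by a preliminary reduction), at the cost of a bit more measurability bookkeeping; the paper's version is shorter on the page because it leans on two ready-made properties of $\mathcal{S}_\mathbf{e}$ already recorded in that section. Both arguments are elementary and self-contained.
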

\begin{proof}
Without loss of generality, we can assume  that  
\[
\int_A |x|^\alpha\,dx<+\infty.
\]
Hence, by using the Markov-Chebyshev inequality, we get that 
\[
|\{x\in A:\ |x|>1\}|\le \int_A |x|^{\alpha}\,dx<+\infty,
\] 
which implies
$|A|<+\infty.$
Notice that  for every $\tau>0$ we have that 
\[
\{x\in A\,:\, |x|^\alpha>\tau \}=A\setminus \overline{B_{\tau^{1/\alpha}}}.
\]
Moreover, since
\[
\mathcal{S}_{\mathbf{e}}(A\cap B_{\tau^{1/\alpha}})\subseteq \mathcal{S}_{\mathbf{e}}(A)\cap \mathcal{S}_{\mathbf{e}}(B_{\tau^{1/\alpha}})=\mathcal{S}_{\mathbf{e}}(A)\cap B_{\tau^{1/\alpha}},
\]
it holds
\[
|A\cap B_{\tau^{1/\alpha}}|=|\mathcal{S}_{\mathbf{e}}(A\cap B_{\tau^{1/\alpha}})|\le |\mathcal{S}_{\mathbf{e}}(A)\cap B_{\tau^{1/\alpha}}|.
\]
To obtain the claimed result we can combine these facts and Cavalieri Formula as follows:
\[
\begin{split}
\int_A |x|^\alpha\,dx =\int_0^{+\infty} |\{x\in A: |x|^\alpha>\tau\}|\,d\tau&=\int_0^\infty |A\setminus \overline{B_{\tau^{1/\alpha}}}|\,d\tau\\
&=\int_0^{+\infty} \Big[|A|-|A\cap B_{\tau^{1/\alpha}}|\Big]\,d\tau\\
&\ge \int_0^{+\infty} \Big[|\mathcal{S}_{\mathbf{e}}(A)|-|\mathcal{S}_{\mathbf{e}}(A)\cap B_{\tau^{1/\alpha}}|\Big]\,d\tau\\
&=\int_0^{+\infty} |\{x\in \mathcal{S}_{\mathbf{e}}(A)\,:\, |x|^\alpha>\tau\}|\,d\tau=\int_{\mathcal{S}_{\mathbf{e}}(A)} |x|^\alpha\,dx.
\end{split}
\]
This concludes the proof. \end{proof}

\begin{prop}\label{pr:simV}
Let $\Omega\subseteq\mathbb{R}^N$ be an open set and $1\le p<\infty$. For every $\alpha>0$, $\mathbf{e}\in \mathbb{S}^{N-1}$ and for every non-negative  function $u\in L^p(\Omega)$ it holds
\begin{equation}
\label{scende}
\int_\Omega |x|^\alpha\,u^p\,dx\ge
\int_{\mathcal{S}_{\mathbf{e}}(\Omega)} |x|^\alpha\,(\mathcal{S}_{\mathbf{e}}(u))^p\,dx. 
\end{equation}
\end{prop}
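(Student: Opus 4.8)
The plan is to reduce the weighted inequality \eqref{scende} to the purely set-theoretic inequality of Lemma~\ref{lm:simV}, using the layer-cake (Cavalieri) representation of $u^p$ together with the equimeasurability identity \eqref{equimeas}. First I would dispose of the trivial case: if $\int_\Omega |x|^\alpha\,u^p\,dx=+\infty$ there is nothing to prove, so I may assume this quantity is finite. As usual, $u$ is extended by zero outside $\Omega$, so that $E_u^+(t)\subseteq\Omega$ for every $t\ge0$ and, by construction, $\mathcal{S}_{\mathbf{e}}(u)$ vanishes a.e.\ outside $\mathcal{S}_{\mathbf{e}}(\Omega)$.

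The next step is to write, for $x\in\mathbb{R}^N$,
\[
u(x)^p=\int_0^{u(x)}p\,t^{p-1}\,dt=\int_0^{+\infty}p\,t^{p-1}\,1_{E_u^+(t)}(x)\,dt,
\]
and likewise for $\mathcal{S}_{\mathbf{e}}(u)$. Multiplying by the non-negative weight $|x|^\alpha$ and applying Tonelli's theorem (all integrands being non-negative and measurable), I obtain
\[
\int_\Omega |x|^\alpha\,u^p\,dx=\int_0^{+\infty}p\,t^{p-1}\left(\int_{E_u^+(t)}|x|^\alpha\,dx\right)dt,
\]
and, using \eqref{equimeas} to identify the super-level sets of $\mathcal{S}_{\mathbf{e}}(u)$,
\[
\int_{\mathcal{S}_{\mathbf{e}}(\Omega)}|x|^\alpha\,(\mathcal{S}_{\mathbf{e}}(u))^p\,dx=\int_0^{+\infty}p\,t^{p-1}\left(\int_{\mathcal{S}_{\mathbf{e}}(E_u^+(t))}|x|^\alpha\,dx\right)dt.
\]
Now for a.e.\ $t>0$ I apply Lemma~\ref{lm:simV} with $A=E_u^+(t)$ to get $\int_{E_u^+(t)}|x|^\alpha\,dx\ge\int_{\mathcal{S}_{\mathbf{e}}(E_u^+(t))}|x|^\alpha\,dx$; multiplying by $p\,t^{p-1}\ge0$ and integrating over $t\in(0,+\infty)$ then compares the two displays above and yields \eqref{scende}.

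There is no serious obstacle here: the argument is bookkeeping once Lemma~\ref{lm:simV} and the equimeasurability relation \eqref{equimeas} are in hand. The only points deserving a word are the joint measurability of $(x,t)\mapsto 1_{E_u^+(t)}(x)$ needed to invoke Tonelli — which is standard — and the fact that \eqref{equimeas} holds only for a.e.\ $t$, which is harmless since a $t$-null set does not affect the outer integral. I would also remark that one need not know a priori that $\mathcal{S}_{\mathbf{e}}(u)\in L^p$: the finiteness of the right-hand side follows from the inequality itself (and is in any case already recorded in \eqref{PZ1}).
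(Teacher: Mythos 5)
Your proposal is correct and follows essentially the same route as the paper's own proof: a Cavalieri/layer-cake decomposition of $u^p$ (with respect to the measure $|x|^\alpha\,dx$), followed by applying Lemma~\ref{lm:simV} to the super-level sets $E_u^+(t)$ and the equimeasurability identity~\eqref{equimeas}, then integrating in $t$. The only difference is cosmetic: you spell out the Tonelli step that justifies Cavalieri's formula, whereas the paper simply invokes it.
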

\begin{proof}
Without loss of generality, we can assume  that  
\[
\int_\Omega u^p\,|x|^\alpha\,dx<+\infty.
\]
By applying Lemma \ref{lm:simV} together with Cavalieri formula for the measure $d\mu=|x|^\alpha\,dx$, we obtain
\[
\begin{split}
\int_\Omega |x|^\alpha\,u^p\,dx &=p\int_0^{+\infty} t^{p-1} \left(\int_{\Omega^+_u(t)} |x|^\alpha \,dx\right) \,dt\ge p\int_0^{+\infty} t^{p-1} \left(\int_{\mathcal{S}_{\mathbf{e}}(\Omega^+_u(t))} |x|^\alpha \,dx\right) \,dt.
\end{split}
\]
Thanks to \eqref{equimeas}, the previous inequality implies that 
\[
\begin{split}
\int_\Omega |x|^\alpha\,u^p\,dx
\geq \,  p\int_0^{+\infty} t^{p-1} \left(\int_{\mathcal{S}_{\mathbf{e}}(\Omega)^+_{\mathcal{S}_\mathbf{e}(u)}(t)} |x|^\alpha \,dx\right) \,dt=\int_{\mathcal{S}_\mathbf{e}(\Omega)}|x|^\alpha\,(\mathcal{S}_{\mathbf{e}}(u))^p\,dx,
\end{split}
\]
where the last identity again follows from Cavalieri formula. 
This concludes the proof. 
\end{proof}

We conclude this section with the following definition.
\begin{definition}[Steiner symmetric function]
 For $\Omega\in\mathfrak{S}^N$, we say that a non-negative function $u\in L^p(\Omega)$ is {\it Steiner symmetric} if 
 \[
 u\equiv \mathcal{S}_{\mathbf{e}_j}(u),\quad \text{in}\ \Omega,\qquad \text{for every}\ j\in\{1,\dots, N\}.
 \]
\end{definition}

We notice that if $u\in L^p(\Omega)$ is Steiner symmetric, then it is non-increasing along every coordinate direction. In particular, if $u\in L^p(\Omega)\cap L^\infty(\Omega)$, we have 
\[
\|u\|_{L^{\infty}(\Omega)}=\|u\|_{L^{\infty}(B_r)},\qquad \text{for every}\ r>0.
\]
Finally, we note that  if $u\in L^p(\Omega)$ is non-negative, then the function $\mathcal{S}_{\mathbf{e}_N}\circ \mathcal{S}_{\mathbf{e}_{N-1}} \circ \cdots  \circ \mathcal{S}_{\mathbf{e}_1} (u)$ is of course  Steiner symmetric.

\section{A minimizing sequence}
\label{sec:4}

We will use the following notation
\[
W^{1,p}_0(\Omega;|x|):=\left\{u\in W^{1,p}_0(\Omega)\, :\, \int_\Omega |x|\,|u|^p\,dx<+\infty\right\}.
\] 
We start with a result holding for a class of open sets under minimal assumptions, not necessarily Steiner symmetric. This shows that $\lambda_{p,q}(\Omega)$ can be approximated by a problem containing a ``vanishing confinement'' term, thus giving a suitable gain of compactness. For completeness, we also include the case $q=p$, even if we will not use it in the sequel.
\begin{lm}
\label{lm:approssimazione}
Let $1<p<\infty$ and $q\ge p$ satisfying \eqref{pq}. Let $\Omega\subseteq\mathbb{R}^N$ be an open set such that $\lambda_p(\Omega)>0$.
For every $n\in\mathbb{N}$, we define 
\[
\lambda_{p,q}(\Omega;V_n):=\inf_{u\in W^{1,p}_0(\Omega;|x|)}\Big\{\mathcal{G}_{p,n}(u)\,:\, \|u\|_{L^q(\Omega)}=1\Big\},
\]
where 
\[
\mathcal{G}_{p,n}(u)=\int_{\Omega}|\nabla u|^{p}\,dx +\int_{\Omega}V_n\,|u|^{p}\,dx\qquad \text{and}\qquad V_n(x)=\frac{|x|}{n+1}.
\]
Then, we have
\begin{equation}
\label{eq:limiteutile0}
\lim_{n\to\infty}\lambda_{p,q}(\Omega;V_n)=\lambda_{p,q}(\Omega).
\end{equation}
Moreover, the value $\lambda_{p,q}(\Omega;V_n)$ is attained by some function $u_n\in W^{1,p}_0(\Omega;|x|)$ with $\|u_n\|_{L^q(\Omega)}=1$ and we have
\begin{equation}
\label{eq:limiteutile}
\lim_{n\to\infty}\int_{\Omega}|\nabla u_n|^p\, dx=\lambda_{p,q}(\Omega).
\end{equation}
\end{lm}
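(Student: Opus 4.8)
The plan is to establish the three assertions in order: the convergence \eqref{eq:limiteutile0}, then the existence of a minimizer $u_n$ by the Direct Method, and finally \eqref{eq:limiteutile} as an immediate consequence of the first two.

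\textbf{The limit \eqref{eq:limiteutile0}.} First I would note that $V_{n+1}\le V_n$ pointwise, so $n\mapsto\lambda_{p,q}(\Omega;V_n)$ is non-increasing and its limit $L$ exists. Since $\mathcal{G}_{p,n}(u)\ge\int_\Omega|\nabla u|^p\,dx$, one has $\lambda_{p,q}(\Omega;V_n)\ge\lambda_{p,q}(\Omega)$ for every $n$, hence $L\ge\lambda_{p,q}(\Omega)$. For the reverse inequality, fix $\varepsilon>0$ and choose $\varphi\in C^\infty_0(\Omega)$ with $\|\varphi\|_{L^q(\Omega)}=1$ and $\int_\Omega|\nabla\varphi|^p\,dx\le\lambda_{p,q}(\Omega)+\varepsilon$; since $\varphi$ has compact support, $M_\varphi:=\int_\Omega|x|\,|\varphi|^p\,dx<\infty$, and $\varphi$ is admissible for $\lambda_{p,q}(\Omega;V_n)$, so $\lambda_{p,q}(\Omega;V_n)\le\lambda_{p,q}(\Omega)+\varepsilon+M_\varphi/(n+1)$. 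Letting $n\to\infty$ and then $\varepsilon\to0$ gives $L\le\lambda_{p,q}(\Omega)$, as desired. (Here $\lambda_{p,q}(\Omega)>0$ by Proposition \ref{prop:positive}, using $\lambda_p(\Omega)>0$.)

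\textbf{Existence of $u_n$.} Fix $n$ and take a minimizing sequence $\{v_k\}_k\subseteq W^{1,p}_0(\Omega;|x|)$ with $\|v_k\|_{L^q(\Omega)}=1$ and $\mathcal{G}_{p,n}(v_k)\to\lambda_{p,q}(\Omega;V_n)$; replacing $v_k$ by $|v_k|$ (which leaves $\mathcal{G}_{p,n}$ and the constraint unchanged, since $|\nabla|v_k||=|\nabla v_k|$ a.e.) I may assume $v_k\ge0$. For $k$ large, $\int_\Omega|\nabla v_k|^p\,dx$ and $\int_\Omega|x|\,|v_k|^p\,dx$ are bounded, and by $\lambda_p(\Omega)>0$ also $\int_\Omega|v_k|^p\,dx\le\lambda_p(\Omega)^{-1}\int_\Omega|\nabla v_k|^p\,dx$ is bounded; thus $\{v_k\}_k$ is bounded in $W^{1,p}_0(\Omega)$, and up to a subsequence $v_k\rightharpoonup u$ weakly in $W^{1,p}_0(\Omega)$ with $v_k\to u$ a.e. in $\Omega$ (Rellich on balls) and $u\ge0$; Fatou gives $\int_\Omega|x|\,u^p\,dx<\infty$, so $u\in W^{1,p}_0(\Omega;|x|)$. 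The crucial point is the \emph{tightness at infinity} supplied by the confining weight: for $R>1$,
\[
\int_{\Omega\setminus B_R}|v_k|^p\,dx\le\frac1R\int_\Omega|x|\,|v_k|^p\,dx\le\frac{C_n}{R},
\]
with the same bound for $u$. Combining this with $v_k\to u$ strongly in $L^p(B_R)$ and letting $R\to\infty$ yields $v_k\to u$ strongly in $L^p(\Omega)$. If $q=p$ this already gives $\|u\|_{L^q(\Omega)}=1$; if $q>p$, applying \eqref{GNS} to $v_k-u$ (the inequality extends from $C^\infty_0(\Omega)$ to $W^{1,p}_0(\Omega)$ by density), with $r=q$ when $q<\infty$ and with $r=\infty$, $\vartheta=N/p$, when $q=\infty$ (legitimate since then $p>N$), and using that $\{\nabla(v_k-u)\}_k$ is bounded in $L^p$, I conclude $\|v_k-u\|_{L^q(\Omega)}\to0$. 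In all cases $\|u\|_{L^q(\Omega)}=1$, so $u\ne0$ and $u$ is admissible.

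\textbf{Conclusion.} By the weak lower semicontinuity of $v\mapsto\int_\Omega|\nabla v|^p\,dx$ and Fatou's lemma (via $v_k\to u$ a.e.), $\mathcal{G}_{p,n}(u)\le\liminf_k\mathcal{G}_{p,n}(v_k)=\lambda_{p,q}(\Omega;V_n)$; combined with admissibility this forces $\mathcal{G}_{p,n}(u)=\lambda_{p,q}(\Omega;V_n)$, so $u_n:=u$ is the sought minimizer. Finally, since $\|u_n\|_{L^q(\Omega)}=1$, on one hand $u_n$ is admissible for $\lambda_{p,q}(\Omega)$, whence $\int_\Omega|\nabla u_n|^p\,dx\ge\lambda_{p,q}(\Omega)$, and on the other $\int_\Omega|\nabla u_n|^p\,dx\le\mathcal{G}_{p,n}(u_n)=\lambda_{p,q}(\Omega;V_n)$, so by \eqref{eq:limiteutile0} $\limsup_n\int_\Omega|\nabla u_n|^p\,dx\le\lambda_{p,q}(\Omega)$; together these prove \eqref{eq:limiteutile}. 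The only genuinely delicate step is the strong $L^q$ convergence of the minimizing sequence: on the unbounded set $\Omega$, boundedness in $W^{1,p}_0(\Omega)$ alone does not prevent mass from escaping to infinity, and it is exactly the weighted term $\tfrac1{n+1}\int_\Omega|x|\,|u|^p\,dx$ that restores compactness — first in $L^p$ through the elementary tightness estimate above, and then in $L^q$ by interpolation — with the case $q=\infty$ handled uniformly because \eqref{GNS} is available with $r=\infty$ precisely when $p>N$.
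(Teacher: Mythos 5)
Your proof is correct and follows essentially the same route as the paper: monotonicity of $n\mapsto\lambda_{p,q}(\Omega;V_n)$, an explicit test function for the reverse inequality, the Direct Method with tightness provided by the confining term, strong $L^p$ then $L^q$ convergence via interpolation, and the squeeze $\lambda_{p,q}(\Omega)\le\int|\nabla u_n|^p\le\lambda_{p,q}(\Omega;V_n)$. The only cosmetic differences are that you invoke Fatou (via a.e. convergence) for the weighted term where the paper uses monotone convergence on balls, and you combine local Rellich with the tightness estimate directly where the paper cites Riesz--Fr\'echet--Kolmogorov; both are interchangeable here.
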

\begin{proof}
Since we have $V_{n+1}\le V_n$, we get
\[
\mathcal{G}_{p,n}(u)\ge \mathcal{G}_{p,n+1}(u),\qquad \text{for every}\ u\in W^{1,p}_0(\Omega;|x|).
\]
Thus, it is not difficult to see that $n\mapsto \lambda_{p,q}(\Omega;V_n)$ is monotone decreasing and
\[
\lim_{n\to\infty}\lambda_{p,q}(\Omega;V_n)\ge \lambda_{p,q}(\Omega).
\]
 In order to prove that this limit coincides with $\lambda_{p,q}(\Omega)$, for every $\varepsilon>0$ and every $u\in C^\infty_0(\Omega)$ such that $\|u\|_{L^q(\Omega)}=1$,
 we can choose $n_\varepsilon\in \mathbb{N}$ such that
\[
\frac{1}{n_\varepsilon+1}\,\int_{\Omega}|x|\,|u|^{p}\,dx \le \varepsilon.
\]
Accordingly, we get for every $n\ge n_\varepsilon$
\[
\lambda_{p,q}(\Omega;V_n)\le \lambda_{p,q}(\Omega;V_{n_\varepsilon})\le \int_\Omega |\nabla u|^p\,dx+\frac{1}{n_\varepsilon+1}\,\int_{\Omega}|x|\,|u|^{p}\,dx \le \int_\Omega |\nabla u|^p\,dx+\varepsilon.
\]
By arbitrariness of both $\varepsilon>0$ and $u$, we conclude that
\[
\lim_{n\to\infty}\lambda_{p,q}(\Omega;V_n)\le \lambda_{p,q}(\Omega),
\]
as well.
\par
To show that $\lambda_{p,q}(\Omega;V_n)$ is attained, we can assume that $W^{1,p}_0(\Omega)\hookrightarrow L^q(\Omega)$ is not compact, otherwise the proof would be straightforward.
We take $\{u_{m,n}\}_{m\in\mathbb{N}}$ to be a minimizing sequence  such that 
\[
 \mathcal{G}_{p,n}(u_{m,n})\le \lambda_{p,q}(\Omega; V_n)+\frac{1}{m+1} \qquad \|u_{m,n}\|_{L^q(\Omega)}=1,\qquad \text{for every}\ m\in \mathbb{N}.
\]
 Since $\lambda_p(\Omega)>0$,  there exists a subsequence (not relabeled) $\{u_{m,n}\}_{m\in\mathbb{N}}$ weakly  converging to some  function $u_n\in W^{1,p}_0(\Omega)$. Moreover, for every $R>0$ and for every $m,n\in\mathbb{N}$, we have
\[
\begin{split}
\int_{\Omega\setminus B_{R}(0)}|u_{m,n}|^{p}\,dx&\le \frac{1}{R}\,\int_{\Omega\setminus B_{R}(0)}|x|\,|u_{m,n}|^{p}\,dx\\
&\le \frac{n+1}{R}\,\mathcal{G}_{p,n}(u_{m,n})\le \frac{n+1}{R}\,\left( \lambda_{p,q}(\Omega;V_n)+\frac{1}{m+1}\right).
\end{split}
\]
Thus, we get
\[
\lim_{R\to+\infty}\int_{\Omega\setminus B_{R}(0)}|u_{m,n}|^{p}\,dx=0,
\]
uniformly in $m$.  Hence, the Riesz-Fr\'echet-Kolmogorov theorem ensures that $\{u_{m,n}\}_{m\in\mathbb{N}}$ converges to $u_n$ strongly in $L^{p}(\Omega)$ as $m$ goes to $\infty$. 
In particular, by observing that the potential $V_n$ is locally bounded, for every $k\in\mathbb{N}$ we have
\[
\begin{split}
\liminf_{m\to\infty}\int_{\Omega}V_n\,|u_{m,n}|^{p}\,dx&\ge \liminf_{m\to\infty}\int_{\Omega\cap B_k}V_n\,|u_{m,n}|^{p}\,dx=\int_{\Omega\cap B_k}V_n\,|u_{n}|^{p}\,dx.
\end{split}
\] 
By arbitrariness of $k$, the Monotone Convergence Theorem gives that
\[
\liminf_{m\to\infty}\int_{\Omega}V_n\,|u_{m,n}|^{p}\,dx\ge \int_{\Omega}V_n\,|u_{n}|^{p}\,dx.
\]
By combining this fact and the lower semicontinuity of $L^p$ norms with respect to the weak convergence, we get that
\begin{equation}\label{eq:liminf}
 \lambda_{p,q}(\Omega;V_n)=\lim_{m\to\infty}\mathcal{G}_{p,n}(u_{m,n})\ge  \int_{\Omega}|\nabla u_n|^{p}\,dx +\int_{\Omega}V_n\,|u_n|^{p}\,dx,
\end{equation}
 which  implies that $u_n\in W^{1,p}_0(\Omega;|x|)$ and 
 \[ 
 \mathcal{G}_{p,n}(u_n)\leq  \lambda_{p,q}(\Omega;V_n).
 \]
In the case $q=p$, this is enough to conclude that $u_n$ is a minimizer. For $q>p$, we still need to verify that $u_n$ has unit $L^q(\Omega)$ norm. An application of the interpolation inequality \eqref{GNS} gives 
\[
\|u_{m,n}-u_n\|_{L^q(\Omega)}\le G_{N,p,q}\, \|\nabla u_{m,n}-\nabla u_n\|_{L^p(\Omega)}^\theta\,\|u_{m,n}-u_n\|_{L^p(\Omega)}^{1-\theta}.
\]
Thus, the sequence $\{u_{m,n}\}_{m\in\mathbb{N}}$  converges to $u_{n}$ also in $L^{q}(\Omega)$. In particular $\|u_n\|_{L^q(\Omega)}=1$ and $u_n$ is feasible for the minimization problem which defines $\lambda_{p,q}(\Omega;V_n)$. In light of \eqref{eq:liminf}, we can finally infer that $u_n$ is a minimizer.
\par
In order to prove \eqref{eq:limiteutile}, we can simply observe that
\[
\lambda_{p,q}(\Omega)\le\int_{\Omega} |\nabla u_n|^p\,dx\le \int_{\Omega}|\nabla u_n|^{p}\,dx +\int_{\Omega}V_n\,|u_n|^{p}\,dx=\lambda_{p,q}(\Omega;V_n).
\]
By taking the limit as $n$ goes to $\infty$ and using \eqref{eq:limiteutile0}, we conclude.
\end{proof}
\begin{rem}
\label{rem:stimaunife}
 With the previous notation, let us suppose that $B_r\subseteq\Omega$. If we take $v\in W^{1,p}_0(B_r)$ to be an extremal function for $\lambda_{p,q}(B_r)$, by extending it equal to zero outside $B_r$, we get for $p<q<\infty$ 
\[
\begin{split}
\lambda_{p,q}(\Omega;V_n)&\le \int_{B_r} |\nabla v|^p\,dx+\int_{B_r} \frac{|x|}{n+1}\,|v|^p\,dx\\
&\le \lambda_{p,q}(B_r)+\frac{1}{n+1}\, \left(\int_{B_r}|x|^\frac{q}{q-p}\,dx\right)^\frac{q-p}{q}\\
&=r^{N-p-\frac{p}{q}\,N}\,\lambda_{p,q}(B_1)+\frac{1}{n+1}\, \left(\dfrac{N\,\omega_N}{N+\frac{q}{q-p}}\right)^\frac{q-p}{q}\,r^{N+1-\frac{p}{q}\,N}.
\end{split}
\]
In the second estimate we used H\"older's inequality and the fact that $v$ has unit $L^q(B_r)$ norm. A similar estimate holds also in the case $q=\infty$, by simply using that $|v|\le\|v\|_{L^\infty(B_r)}=1$.
\end{rem}
We then specialize the previous problem to the case of a Steiner symmetric set and give some mild regularity properties of the minimizers found in the previous result.
\begin{prop}
\label{prop:fine}
Let $1<p<\infty$ and let $p<q<\infty$ satisfy \eqref{pq}. Let $\Omega\in\mathfrak{S}^N$ be an open set, with $\Omega\not=\mathbb{R}^N$. With the notation of Lemma \ref{lm:approssimazione}, the sequence $\{u_n\}_{n\in\mathbb{N}}$ can be chosen so that it has the following properties:
\begin{itemize}
\item[{\it (i)}] $u_n\ge 0$ and $u_n$ is Steiner symmetric;
\vskip.2cm
\item[{\it (ii)}] there exists $M_1=M_1(N,p,q,r_\Omega)>0$ such that
\[
\|u_n\|_{W^{1,p}(\Omega)}\le M_1,\qquad \text{for every}\ n\in\mathbb{N};
\]
\item[{\it (iii)}] $u_n\in L^\infty(\Omega)$ and there exists $M_2=M_2(N,p,q,r_\Omega)>0$ such that
\[
\|u_n\|_{L^\infty(\Omega)}\le M_2,\qquad \text{for every}\ n\in\mathbb{N};
\]
\item[{\it (iv)}] 
there exists $\mathfrak{m}=\mathfrak{m}(N,p,q,r_\Omega)>0$ 
such that 
\[
\inf_{B_{r_\Omega/(3\sqrt{N})}} u_n\ge \mathfrak{m},\qquad \text{for every}\ n\in\mathbb{N}.
\]
\end{itemize}
\end{prop}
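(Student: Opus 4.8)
The plan is to start from the minimizers $u_n$ of $\lambda_{p,q}(\Omega;V_n)$ provided by Lemma \ref{lm:approssimazione} and to adjust them so as to meet all four requirements. First note that, since $\Omega\in\mathfrak{S}^N$ with $\Omega\neq\mathbb{R}^N$, Lemma \ref{lemmainradSN} gives $r_\Omega<+\infty$ and $B_{r_\Omega}\subseteq\Omega$; applying Proposition \ref{prop:positivity} with $q=p$ we get $\lambda_p(\Omega)>0$, together with the explicit bound $\lambda_p(\Omega)\geq c(N,p)\,(1/r_\Omega)^p$, so Lemma \ref{lm:approssimazione} indeed applies. To obtain \emph{(i)}, I would replace $u_n$ first by $|u_n|$ — which changes neither $\int_\Omega|\nabla u_n|^p\,dx$, nor $\int_\Omega V_n|u_n|^p\,dx$, nor $\|u_n\|_{L^q(\Omega)}$ — and then apply successively the symmetrizations $\mathcal{S}_{\mathbf{e}_1},\dots,\mathcal{S}_{\mathbf{e}_N}$. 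At each step membership in $W^{1,p}_0(\Omega)$ is preserved by Lemma \ref{lm:zeropres} (recall $\mathcal{S}_{\mathbf{e}_j}(\Omega)=\Omega$), the Dirichlet integral does not increase by \eqref{PZ2}, the confinement integral does not increase by Proposition \ref{pr:simV} with $\alpha=1$, and the $L^q$ norm is preserved by equimeasurability, see \eqref{equimeas}. Hence the resulting function is still admissible with energy $\leq\lambda_{p,q}(\Omega;V_n)$, thus again a minimizer, and it is non-negative and Steiner symmetric as noted at the end of Section \ref{sec:3}; we rename it $u_n$.

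For \emph{(ii)} and \emph{(iii)} I would use that, since $B_{r_\Omega}\subseteq\Omega$, Remark \ref{rem:stimaunife} with $r=r_\Omega$ bounds $\lambda_{p,q}(\Omega;V_n)$ by a constant depending only on $N,p,q,r_\Omega$, uniformly in $n$. Then $\int_\Omega|\nabla u_n|^p\,dx\leq\mathcal{G}_{p,n}(u_n)=\lambda_{p,q}(\Omega;V_n)$ gives a uniform bound on the Dirichlet integrals, and the Poincar\'e inequality together with the lower bound on $\lambda_p(\Omega)$ above controls $\|u_n\|_{L^p(\Omega)}$; this is \emph{(ii)}. For \emph{(iii)}, dropping the non-negative term $V_n u_n^{p-1}$ in the Euler--Lagrange equation satisfied by $u_n$ shows that each $u_n$ is a non-negative weak subsolution of $-\Delta_p u=\lambda_{p,q}(\Omega;V_n)\,u^{q-1}$ in $\Omega$; since $\lambda_{p,q}(\Omega;V_n)$ is bounded and $\|u_n\|_{L^q(\Omega)}=1$, Lemma \ref{lm:Linfty} yields the uniform $L^\infty$ bound.

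The heart of the matter is \emph{(iv)}. On the fixed ball $B_{r_\Omega}$ one has $0\leq V_n\leq r_\Omega$, so the Euler--Lagrange equation together with \emph{(iii)} gives
\[
-\Lambda\,u_n^{p-1}\ \leq\ -\Delta_p u_n\ =\ \lambda_{p,q}(\Omega;V_n)\,u_n^{q-1}-V_n\,u_n^{p-1}\ \leq\ \Lambda\,u_n^{p-1}\qquad\text{in }B_{r_\Omega},
\]
for a constant $\Lambda=\Lambda(N,p,q,r_\Omega)>0$; in other words, on $B_{r_\Omega}$ the function $u_n$ is a non-negative solution of a $p$-Laplace-type equation whose zero-order coefficient is bounded uniformly in $n$. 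Applying the Harnack inequality for such quasilinear equations on the ball $B_{r_\Omega/(3\sqrt N)}$ — the factor $3\sqrt N$ being tuned so that the threefold dilate $B_{r_\Omega/\sqrt N}$ is still contained in $B_{r_\Omega}\subseteq\Omega$, and the Harnack constant depending only on $N,p$ and $\Lambda\,r_\Omega^p$ — we would get $\sup_{B_{r_\Omega/(3\sqrt N)}}u_n\leq C_H\,\inf_{B_{r_\Omega/(3\sqrt N)}}u_n$ with $C_H=C_H(N,p,q,r_\Omega)$. It then remains to bound the left-hand side from below: by the Steiner symmetry of $u_n$ one has $\sup_{B_{r_\Omega/(3\sqrt N)}}u_n=\|u_n\|_{L^\infty(\Omega)}$, and the chain $1=\int_\Omega u_n^q\leq\|u_n\|_{L^\infty(\Omega)}^{q-p}\int_\Omega u_n^p$ combined with \emph{(ii)} forces $\|u_n\|_{L^\infty(\Omega)}\geq c_\infty(N,p,q,r_\Omega)>0$; hence $\inf_{B_{r_\Omega/(3\sqrt N)}}u_n\geq c_\infty/C_H=:\mathfrak{m}$.

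The step I expect to be the main obstacle is precisely this last one: one must make sure the Harnack constant does not degenerate as $n\to\infty$. This is the reason for working on the \emph{fixed} ball $B_{r_\Omega}$, where all the quantities entering the coefficient of the equation — $\lambda_{p,q}(\Omega;V_n)$, $\|u_n\|_{L^\infty}$ and $\|V_n\|_{L^\infty(B_{r_\Omega})}$ — are controlled only in terms of $N,p,q,r_\Omega$ thanks to \emph{(ii)}--\emph{(iii)} and Remark \ref{rem:stimaunife}; the Steiner symmetry is then what converts the global normalization $\|u_n\|_{L^q(\Omega)}=1$ into a pointwise lower bound near the origin, rather than merely an integral one.
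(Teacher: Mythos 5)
Your proposal is correct and follows essentially the same strategy as the paper's proof: Steiner symmetrize the minimizers from Lemma \ref{lm:approssimazione} using Lemma \ref{lm:zeropres}, \eqref{PZ2}, \eqref{PZ1} and Proposition \ref{pr:simV}; get the uniform $W^{1,p}$ and $L^\infty$ bounds via Remark \ref{rem:stimaunife}, the Poincar\'e inequality and Lemma \ref{lm:Linfty}; derive a uniform lower bound on $\|u_n\|_{L^\infty(\Omega)}$ by interpolation; and convert it into a pointwise lower bound near the origin via Steiner symmetry and a Harnack inequality with zero-order term controlled uniformly in $n$ on the fixed ball $B_{r_\Omega}$. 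The only cosmetic difference is that the paper invokes Trudinger's Harnack inequality on the cube $Q_{r_\Omega/\sqrt N}\subseteq B_{r_\Omega}$ (which is where the $\sqrt N$ actually comes from), then restricts to $B_{r_\Omega/(3\sqrt N)}\subseteq Q_{r_\Omega/(3\sqrt N)}$, whereas you phrase it directly on balls; both work.
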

\begin{proof}
We first observe that $\lambda_p(\Omega)>0$, thanks to Proposition \ref{prop:positivity}. Thus, the results of Lemma \ref{lm:approssimazione} actually hold.
The property $(i)$ is quite straightforward, it is sufficient to observe that the minimization problem 
\[
\lambda_{p,q}(\Omega;V_n):=\inf_{u\in W^{1,p}_0(\Omega;|x|)}\Big\{\mathcal{G}_{p,n}(u)\,:\, \|u\|_{L^q(\Omega)}=1\Big\},
\]
is invariant by substituting $u$ with $|u|$. Moreover, by repeatedly applying  Lemma \ref{lm:zeropres},  Proposition \ref{pr:simV} and  \eqref{PZ1},   for every $u\ge 0$ admissible in the above  minimization problem, we have that  its Steiner symmetrization 
$$
\widetilde{u}:=\mathcal{S}_{\mathbf{e}_N}\circ \mathcal{S}_{\mathbf{e}_{N-1}} \circ \cdots  \circ \mathcal{S}_{\mathbf{e}_1} (u)
$$
 is still admissible. Moreover, by taking into account \eqref{scende} and  \eqref{PZ2},   we have
\[
\mathcal{G}_{p,n}(\widetilde{u})\le \mathcal{G}_{p,n}(u).
\]
 Thus, by minimality, we can suppose that each $u_n$ has the claimed properties.
\par
In order to bound uniformly the $W^{1,p}(\Omega)$ norm, we first observe that
\[
u\mapsto \|\nabla u\|_{L^p(\Omega)},
\]
is an equivalent norm on $W^{1,p}_0(\Omega)$, thanks to the fact that $\lambda_p(\Omega)>0$. More precisely, by Proposition \ref{prop:positivity}, we have that 
\[
\|u\|_{W^{1,p}(\Omega)}\leq C(N,p,r_{\Omega})\, \|\nabla u\|_{L^p(\Omega)}, 	\qquad \text{for every}\ u\in W^{1,p}_0(\Omega).
\]
Now, we observe that, by minimality of $u_n$ we have 
\begin{equation}
\label{gradienti_limitati}
\|\nabla u_n\|^p_{L^p(\Omega)}\le \int_{\Omega}|\nabla u_n|^{p}\,dx +\int_{\Omega}V_n\,|u_n|^{p}\,dx=\lambda_{p,q}(\Omega;V_n).
\end{equation}
By Lemma \ref{lemmainradSN}, we know that $r_\Omega<+\infty$ and $B_{r_\Omega}\subseteq\Omega$.
We can apply the estimate of Remark \ref{rem:stimaunife} with the ball $B_{r_\Omega}$ and get
\begin{equation}
\label{uniforma}
\lambda_{p,q}(\Omega;V_n)\le r_\Omega^{N-p-\frac{p}{q}\,N}\,\lambda_{p,q}(B_1)+\frac{1}{n+1}\, \left(\dfrac{N\,\omega_N}{N+\frac{q}{q-p}}\right)^\frac{q-p}{q}\,r_\Omega^{N+1-\frac{p}{q}\,N}.
\end{equation}
By combining this estimate with \eqref{gradienti_limitati}, we get a uniform upper bound on $\|\nabla u_n\|_{L^p(\Omega)}$, depending only on $N,p,q$ and $r_\Omega$. 
This concludes the proof of $(ii)$.
\par
We now consider point $(iii)$. The optimality condition for our minimization problem is given by the Euler-Lagrange equation 
\begin{equation}
\label{ELpepe}
-\Delta_p u_n+V_n\,u_n^{p-1}=\lambda_{p,q}(\Omega;V_n)\,u_n^{q-1},\qquad \text{in}\ \Omega,
\end{equation}
in weak form. In particular, by using that $V_n\,u_n^{p-1}\ge 0$, we get that each $u_n$ is a non-negative weak subsolution of
\[
-\Delta_p u_n\le \lambda_{p,q}(\Omega;V_n)\,u_n^{q-1},\qquad \text{in}\ \Omega.
\]
By appealing to Lemma \ref{lm:Linfty}, we get that $u_n\in L^\infty(\Omega)$, with the estimate
\[
\|u_n\|_{L^\infty(\Omega)}\le C_1\,\Big(\lambda_{p,q}(\Omega;V_n)\Big)^\frac{N}{p\,q-(q-p)\,N}.
\]
Observe that we also used that $u_n$ has unit $L^q(\Omega)$ norm. On account of \eqref{uniforma}, 
by defining 
\[
M_2:=C_1\,\left(r_\Omega^{N-p-\frac{p}{q}\,N}\,\lambda_{p,q}(B_1)+\left(\dfrac{N\,\omega_N}{N+\frac{q}{q-p}}\right)^\frac{q-p}{q}\,r_\Omega^{N+1-\frac{p}{q}\,N}\right)^\frac{N}{p\,q-(q-p)\,N},
\]
we thus get the claimed uniform estimate in $(iii)$, by observing that $M_2$ only depends on $N,p,q$ and $r_\Omega$.
\par
We finally come to property $(iv)$. 
 We note that $\|u_n\|_{L^\infty(\Omega)}$ is uniformly bounded from below, as well. Indeed, point $(ii)$ combined with a simple interpolation, gives
\[ 
1=\|u_n\|_{L^q(\Omega)}\le \|u_n\|_{L^\infty(\Omega)}^\frac{q-p}{q}\,\|u_n\|_{L^p(\Omega)}^\frac{p}{q}\leq  \|u_n\|_{L^\infty(\Omega)}^\frac{q-p}{q}\,M_1^\frac{p}{q}, 
\]
which implies
%
\[
\|u_n\|_{L^\infty(\Omega)}\ge M_1^{-\frac{p}{q-p}},\qquad \text{for every}\ n\in\mathbb{N},
\]
and this lower bound only depends on $N,p,q$ and $r_\Omega$. We also observe that 
\[
\|u_n\|_{L^\infty(\Omega)}=\|u_n\|_{L^\infty(B_r)},\qquad \text{for every}\ r>0,
\] 
thanks to the Steiner symmetry of $u_n$. We now wish to appeal to Harnack's inequality, in order to conclude. To this aim, we observe that from \eqref{ELpepe} we get in particular that $u_n$ is a non-negative weak solution of
\[
-\mathrm{div}\mathbf{A}(\nabla u)=\mathbf{B}_n(x,u),\qquad \text{in}\ B_{r_\Omega}\subseteq\Omega,
\]
where 
\[
\mathbf{A}(\xi)=|\xi|^{p-2}\,\xi,\qquad \mathbf{B}_n(x,t)=\Big(\lambda_{p,q}(\Omega_n;V_n)\,u_n(x)^{q-p}-V_n(x)\Big)\,|t|^{p-2}\,t.
\]
We notice that for $x\in B_{r_\Omega}$ we have
\[
\begin{split}
|\mathbf{B}_n(x,t)|&\le |V_n(x)|\,|t|^{p-1}+|\lambda_{p,q}(\Omega_n;V_n)\,u_n(x)^{q-p}|\,|t|^{p-1}\\
&\le \left[r_\Omega+M_2^{q-p}\,\left(r_\Omega^{N-p-\frac{p}{q}\,N}\,\lambda_{p,q}(B_1)+ \left(\dfrac{N\,\omega_N}{N+\frac{q}{q-p}}\right)^\frac{q-p}{q}\,r_\Omega^{N+1-\frac{p}{q}\,N}\right)\right]\,|t|^{p-1}\\
&=: b\,|t|^{p-1},
\end{split}
\]
with $b=b(N,p,q,r_\Omega)>0$. Observe that we used \eqref{uniforma} and the uniform upper bound of point $(iii)$.
Accordingly, we can apply Harnack's inequality of \cite[Theorem 1.1]{Tru} in the cube $Q_{r_\Omega/\sqrt{N}}\subseteq B_{r_\Omega}$ so to get
\[
\sup_{Q_{r_\Omega/(3\sqrt{N})}} u_n\le H\,\inf_{Q_{r_\Omega/(3\sqrt{N})}} u_n,
\]
where the constant $H$ depends only on $N,p,q$ and  $r_\Omega>0$, in light of the discussion above. By observing that $B_{r_\Omega/(3\sqrt{N})}\subseteq Q_{r_\Omega/(3\sqrt{N})}$, we finally get
\[
H\,\inf_{B_{r_\Omega/(3\sqrt{N})}} u_n\ge \sup_{B_{r_\Omega/(3\sqrt{N})}} u_n=\|u_n\|_{L^\infty(\Omega)}\ge M_1^{-\frac{p}{q-p}},
\]
which gives the desired uniform lower bound, by setting $\mathfrak{m}=M_1^{-\frac{p}{q-p}}/H>0$.
\end{proof}

\section{Proof of the Main Theorem: existence}
\label{sec:5}
In this section, we are going to prove the existence part of the Main Theorem stated at the beginning.
We start with the case $q<\infty$.
\begin{teo}[Case $q<\infty$]
\label{teo:main}
Let $1<p<\infty$ and assume that  $q>p$ satisfies \eqref{pq}, with $q<\infty$.
Let $\Omega\in\mathfrak{S}^N$ be such that  $\Omega\neq\mathbb R^N$. Then, the infimum defining $\lambda_{p,q}(\Omega)$ is attained by some Steiner symmetric function  $u\in W^{1,p}_0(\Omega)\setminus\{0\}$.
\end{teo}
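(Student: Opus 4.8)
The plan is to use the Direct Method on the particular minimizing sequence $\{u_n\}_{n\in\mathbb{N}}$ produced by Proposition \ref{prop:fine}, exploiting the uniform bounds $(ii)$--$(iv)$ and the Br\'ezis--Lieb Lemma to upgrade weak convergence to strong convergence in $L^q(\Omega)$. First I would recall from Lemma \ref{lm:approssimazione} and Proposition \ref{prop:fine} that each $u_n\in W^{1,p}_0(\Omega;|x|)$ is non-negative, Steiner symmetric, has $\|u_n\|_{L^q(\Omega)}=1$, $\mathcal G_{p,n}(u_n)=\lambda_{p,q}(\Omega;V_n)\to\lambda_{p,q}(\Omega)$, $\|\nabla u_n\|_{L^p(\Omega)}^p\to\lambda_{p,q}(\Omega)$, and $\|u_n\|_{W^{1,p}(\Omega)}\le M_1$, $\|u_n\|_{L^\infty(\Omega)}\le M_2$, $\inf_{B_{r_\Omega/(3\sqrt N)}}u_n\ge\mathfrak m>0$. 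Since $\lambda_p(\Omega)>0$ (Proposition \ref{prop:positivity}), up to a subsequence $u_n\rightharpoonup u$ weakly in $W^{1,p}_0(\Omega)$, and by Rellich locally and a diagonal argument $u_n\to u$ strongly in $L^p_{\mathrm{loc}}(\Omega)$ and a.e. in $\Omega$. The a.e. convergence together with the uniform lower bound on $B_{r_\Omega/(3\sqrt N)}$ forces $u\ge\mathfrak m>0$ on that ball, hence $u\not\equiv 0$: this is the step that rules out vanishing of the weak limit.

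Next I would establish pointwise a.e. convergence of the gradients $\nabla u_n\to\nabla u$, which is needed to apply Br\'ezis--Lieb when $p\ne 2$. Here I would use the standard monotonicity/ellipticity argument: writing the weak form of \eqref{ELpepe} for $u_n$, testing with $(u_n-u)\zeta$ for cutoffs $\zeta$, and using the strong $L^p_{\mathrm{loc}}$ convergence of $u_n$ together with the uniform $L^\infty$ bound and the uniform bound on the right-hand side (as estimated in the proof of Proposition \ref{prop:fine}), one gets $\int \langle|\nabla u_n|^{p-2}\nabla u_n-|\nabla u|^{p-2}\nabla u,\nabla u_n-\nabla u\rangle\zeta\,dx\to 0$; by the strict monotonicity inequality for the $p$-Laplacian this yields $\nabla u_n\to\nabla u$ in measure on compact subsets, hence a.e. after a further subsequence. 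Then I would apply the Br\'ezis--Lieb Lemma to the sequence $(u_n,\nabla u_n)$: setting $v_n=u_n-u$, one obtains
\[
\|\nabla u_n\|_{L^p}^p=\|\nabla u\|_{L^p}^p+\|\nabla v_n\|_{L^p}^p+o(1),\qquad 1=\|u_n\|_{L^q}^q=\|u\|_{L^q}^q+\|v_n\|_{L^q}^q+o(1).
\]

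Finally I would close the argument as in \cite[Lemma 4.2]{CFL}. Combining the two Br\'ezis--Lieb splittings with $\|\nabla u_n\|_{L^p}^p\to\lambda_{p,q}(\Omega)$, the definition of $\lambda_{p,q}(\Omega)$ applied to both $u$ and $v_n$, and the super-homogeneity $q>p$ (so that $\|u\|_{L^q}^q+\|v_n\|_{L^q}^q\le(\|u\|_{L^q}^q+\|v_n\|_{L^q}^q)^{p/q}\cdot(\dots)$ — more precisely, using $a+b\le(a^{p/q}+b^{p/q})$ for $a,b\ge 0$ when $p/q<1$ in the normalized form, i.e. the strict subadditivity quantified in Lemma \ref{lm:quantitativo} with $\alpha=p/q$), one derives
\[
\lambda_{p,q}(\Omega)\ge\lambda_{p,q}(\Omega)\Big(\|u\|_{L^q}^p+\|v_n\|_{L^q}^p\Big)+o(1)\ge\lambda_{p,q}(\Omega)\Big(\|u\|_{L^q}^q+\|v_n\|_{L^q}^q\Big)^{p/q}+o(1)=\lambda_{p,q}(\Omega)+o(1),
\]
with equality throughout forcing, via the strict quantitative subadditivity of $t\mapsto t^{p/q}$, that either $\|u\|_{L^q}=0$ or $\|v_n\|_{L^q}\to 0$. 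Since $u\not\equiv 0$, we must have $\|v_n\|_{L^q}\to 0$, i.e. $u_n\to u$ strongly in $L^q(\Omega)$; then $\|u\|_{L^q(\Omega)}=1$ and by weak lower semicontinuity $\int_\Omega|\nabla u|^p\,dx\le\liminf_n\int_\Omega|\nabla u_n|^p\,dx=\lambda_{p,q}(\Omega)$, so $u$ is an extremal, and it is Steiner symmetric as an a.e. limit of Steiner symmetric functions. (As a byproduct $\|\nabla v_n\|_{L^p}\to 0$, giving strong convergence in $W^{1,p}_0(\Omega)$.) The main obstacle I anticipate is the a.e. gradient convergence step: making the cutoff/testing argument rigorous requires care that all error terms vanish using only the uniform bounds available, and it is exactly the point flagged in the introduction as the new difficulty relative to the case $p=2$ of \cite{CFL}.
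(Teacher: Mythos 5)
Your proposal tracks the paper's proof essentially step for step: the minimizing sequence from Proposition \ref{prop:fine}, non-vanishing via the Harnack-based uniform lower bound, the monotonicity trick to obtain a.e.\ gradient convergence, the double Br\'ezis--Lieb decomposition, and closure via the quantitative subadditivity of Lemma \ref{lm:quantitativo}. The only cosmetic deviation is at the very end: you invoke Steiner symmetry of the limit as an a.e.\ consequence, whereas the paper derives it cleanly from the $L^p$-contractivity of Steiner symmetrization combined with the strong $L^p$ convergence of $\{u_n\}_{n\in\mathbb N}$ that is obtained as a byproduct -- both routes lead to the same conclusion.
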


\begin{proof}
First of all, by applying Proposition \ref{prop:positivity}, we can assure that  $\lambda_{p,q}(\Omega)>0$. We wish to apply the Direct Method in the Calculus of Variations: unfortunately, under our standing assumptions, in general the embedding 
\[
W^{1,p}_0(\Omega)\hookrightarrow L^q(\Omega),
\]
{\it is not} compact. However, we will show that it is possible to choose a particular minimizing sequence, which is precompact in $L^q(\Omega)$.
\par
More precisely, we want to use the sequence $\{u_n\}_{n\in\mathbb{N}}\subseteq W^{1,p}_0(\Omega;|x|)$ obtained in Proposition \ref{prop:fine}. Observe that this is indeed a minimizing sequence for our problem, thanks to \eqref{eq:limiteutile}.
In particular, this is a bounded sequence in $W^{1,p}_0(\Omega)$. 
Thus, it converges weakly to a function $u\in W^{1,p}_0(\Omega)$, up to a subsequence. It is not difficult to see that $u\not =0$: indeed, by testing the weak convergence in $L^p(\Omega)$ against the characteristic function of the ball $B_{r_\Omega/(3\sqrt{N})}$, we get
\[
\int_{B_{r_\Omega/(3\sqrt{N})}}u\,dx=\lim_{n\to\infty} \int_{B_{r_\Omega/(3\sqrt{N})}}u_n\,dx\ge \mathfrak{m}\,\left|B_{r_\Omega/(3\sqrt{N})}\right|>0,
\]
in light of point $(iv)$ of Proposition \ref{prop:fine}.
This shows that $u\not\equiv 0$, as claimed. In order to show that $u$ is an extremal for $\lambda_{p,q}(\Omega)$ and conclude, it is sufficient to show that 
\begin{equation}
\label{volio}
\lim_{n\to\infty} \|u_n-u\|_{L^q(\Omega)}=0,
\end{equation}
possibly up to a subsequence.
\par
In what follows, when needed, we can always think of our functions $\{u_n\}_{n\in\mathbb{N}}$ and $u$ as defined on the whole $\mathbb{R}^N$, by extending them to be $0$ on the complement of $\Omega$. Thus, we also have $\{u_n\}_{n\in\mathbb{N}}\subseteq W^{1,p}(\mathbb{R}^N)$ and in particular $\{u_n\}_{n\in\mathbb{N}}\subseteq W^{1,p}(B_R)$, for every $R>0$.
Recall that for every $R>0$, the embedding $W^{1,p}(B_R)\hookrightarrow L^p(\Omega)$ is compact. By using this fact and the weak convergence in $W^{1,p}_0(\Omega)$ exposed above, a standard argument (see for example \cite[Lemma 3.8.7 \& Remark 3.9.5]{BraBook}) also gives that 
\begin{equation}
\label{convpalle}
\lim_{n\to\infty} \|u_n-u\|_{L^p(B_R)}=\lim_{n\to\infty} \|u_n-u\|_{L^p(\Omega\cap B_R)}=0,\qquad \text{for every}\ R>0.
\end{equation} 
We are going to show that the same holds for the sequence of the gradients $\{\nabla u_n\}_{n\in\mathbb{N}}$.
We use again  the Euler-Lagrange equation \eqref{ELpepe}.
Let $R>0$ and $\zeta\in C^{\infty}_0(\mathbb{R}^N)$  a cut-off function such that
\[
0\le \zeta\le 1,\quad \zeta=1 \text{ on } B_{R},\quad \zeta=0\ \text{on}\ \mathbb{R}^{N}\setminus B_{2R}, \quad \|\nabla \zeta\|_{\infty}\le \frac{C}{R}.
\]
By testing the weak formulation of \eqref{ELpepe} with the admissible function $\zeta\, (u_n-u)\in W^{1,p}_0(\Omega)$, we obtain
\[
\begin{split}
\int_\Omega \langle|\nabla u_n|^{p-2}\nabla u_n, \nabla u_n-\nabla u\rangle\,\zeta\,dx &=\lambda_{p,q}(\Omega; V_n)\,\int_{\Omega}u_n^{q-1}\,(u_{n}-u)\,\zeta\,dx\\
&-\frac{1}{n+1}\int_{\Omega}|x|\,u_n^{p-1}\,(u_n-u)\,\zeta\,dx\\
&-\int_\Omega \langle|\nabla u_n|^{p-2}\nabla u_n, \nabla \zeta\rangle\,(u_n-u) \,dx .
\end{split}
\]
It is not difficult to see that the integrals in the right-hand side converges to $0$, as $n$ goes to $\infty$. Indeed, by Proposition \ref{prop:fine} and the properties of $\zeta$, we have
\[
\lambda_{p,q}(\Omega; V_n)\,\left|\int_{\Omega}u_n^{q-1}\,(u_{n}-u)\,\zeta\,dx\right|\le \lambda_{p,q}(\Omega;V_n)\,M_2^{q-1}\,|B_{2R}|^\frac{p-1}{p}\,\left(\int_{B_{2R}} |u_n-u|^p\,dx\right)^\frac{1}{p},
\]
\[
\left|\frac{1}{n+1}\int_{\Omega}|x|\,u_n^{p-1}\,(u_n-u)\,\zeta\,dx\right|\le 2\,R\,M_2^{p-1}\,|B_{2R}|^\frac{p-1}{p}\,\left(\int_{B_{2R}} |u_n-u|^p\,dx\right)^\frac{1}{p},
\]
and
\[
\left|\int_\Omega \langle|\nabla u_n|^{p-2}\nabla u_n, \nabla \zeta\rangle\,(u_n-u) \,dx\right|\le \frac{C}{R}\,M_1^{p-1}\,\left(\int_{B_{2R}} |u_n-u|^p\,dx\right)^\frac{1}{p}.
\]
If we use \eqref{convpalle}, we get the claim.
In particular, by taking the limit as $n$ goes to $\infty$, we deduce that 
\[
\lim_{n\to\infty}\int_{\Omega} \langle|\nabla u_n|^{p-2}\,\nabla u_n, \nabla u_n-\nabla u\rangle\,\zeta\,dx=0.
\]
On the other hand, the previously inferred weak convergence of $\nabla u_n$ to $\nabla u$ gives also
\[
\lim_{n\to\infty}\int_{\Omega} \langle|\nabla u|^{p-2}\,\nabla u, \nabla u_n-\nabla u\rangle\,\zeta\,dx=0.
\]
By substracting these two identities we deduce that
\[
\lim_{n\to\infty}\int_{\Omega} \langle|\nabla u_n|^{p-2}\,\nabla u_n-|\nabla u|^{p-2}\nabla u, \nabla u_n-\nabla u\rangle\,\zeta\,dx=0.
\]
We can now proceed in a standard way, by exploiting the monotonicity properties of the $p-$Laplacian (see for example the proof of \cite[Lemma B.1]{BPZ}), so to obtain that
\begin{equation}
\label{convpallegrad}
\lim_{n\to\infty}\|\nabla u_n-\nabla u\|_{L^{p}(B_R)}=\lim_{n\to\infty}\|\nabla u_n-\nabla u\|_{L^{p}(\Omega\cap B_R)}=0,\qquad \text{for every}\ R>0.
\end{equation}
Before proceeding further, we observe that from \eqref{convpalle} and \eqref{convpallegrad}, by using a standard diagonal argument, we can infer existence of a subsequence (not relabelled) such that we also have
\[
\lim_{n\to\infty} u_n(x)=u(x),\qquad \lim_{n\to\infty} \nabla u_n(x)=\nabla u(x),\qquad \text{for a.\,e.}\ x\in\Omega.
\]
This in turn permits to apply the so-called {\it Brezis-Lieb Lemma}  (see \cite[Theorem 1]{BreLie}). We thus obtain
\[
\lim_{n\to\infty}\left(\int_\Omega |\nabla u_n|^{p}\,dx-\int_{\Omega}|\nabla u_n-\nabla u|^{p}\,dx\right)=\int_{\Omega}|\nabla u|^{p}\,dx.
\]
Observe that from \eqref{eq:limiteutile}, we know that
\[
\lim_{n\to\infty}\int_\Omega |\nabla u_n|^{p}\,dx=\lambda_{p,q}(\Omega).
\]
Thus, by joining the last two informations,
we can conclude that
 \begin{equation}
 \label{BL3}
\lim_{n\to\infty}\int_{\Omega}|\nabla u_n-\nabla u|^{p}\,dx=\lambda_{p,q}(\Omega)-\int_{\Omega}|\nabla u|^{p}\,dx.
\end{equation}
Analogously, from the almost everywhere convergence of $u_n$ to $u$, we have
\begin{equation}
\label{BL}
\lim_{n\to\infty}\left(\int_\Omega |u_n|^{q}\,dx-\int_{\Omega}|u_n-u|^{q}\,dx\right)=\int_{\Omega}|u|^{q}\,dx.
\end{equation}
In particular, there exists $n_0\in\mathbb{N}$ such that
\[
A_n:=\int_\Omega |u_n|^{q}\,dx-\int_{\Omega}|u_n-u|^{q}\,dx\ge \frac{1}{2}\,\int_\Omega |u|^q\,dx>0,\qquad \text{for every}\ n\ge n_0.
\]
Observe that we used that $u\not\equiv 0$, by the first part of the proof.
By taking into account that each $u_n$ has unit  $L^q(\Omega)$ norm, we obtain, for every $n\ge n_0$, 
\[
\begin{split}
\lambda_{p,q}(\Omega)=\lambda_{p,q}(\Omega)\,\|u_n\|_{L^q(\Omega)}^p&\le \lambda_{p,q}(\Omega)\,A_n^\frac{p}{q}+\lambda_{p,q}(\Omega)\,\|u_n-u\|_{L^{q}(\Omega)}^p\\
&-c\,\min\left\{A_n^\frac{p}{q},\|u_n-u\|_{L^{q}(\Omega)}^p\right\},
\end{split}
\]
with $c=c(p/q)>0$, thanks to the enforced sub-additivity of the power $p/q<1$, given by Lemma \ref{lm:quantitativo}. 
By using the definition of $\lambda_{p,q}(\Omega)$, we then get, for every $n\ge n_0$, 
\[
\begin{split}
\lambda_{p,q}(\Omega)&\le \lambda_{p,q}(\Omega)\,A_n^\frac{p}{q}+\int_{\Omega}|\nabla u_n-\nabla u|^{p}\,dx-c\,\min\left\{A_n^\frac{p}{q},\|u_n-u\|_{L^{q}(\Omega)}^p\right\}.
\end{split}
\]
We take the limit as $n$ goes to $\infty$: in light of \eqref{BL3} and \eqref{BL}, this yields
\[
\begin{split}
0&\le \lambda_{p,q}(\Omega)\,\left(\int_\Omega |u|^q\,dx\right)^\frac{p}{q}-\int_\Omega |\nabla u|^p\,dx-c\,\lim_{n\to\infty}\min\left\{A_n^\frac{p}{q},\|u_n-u\|_{L^{q}(\Omega)}^p\right\},
\end{split}
\]
that is
\[
\int_\Omega |\nabla u|^p\,dx+ c\,\lim_{n\to\infty}\min\left\{A_n^\frac{p}{q},\|u_n-u\|_{L^{q}(\Omega)}^p\right\}\le \lambda_{p,q}(\Omega)\,\left(\int_\Omega |u|^q\,dx\right)^\frac{p}{q}.
\]
On the other hand, by definition of $\lambda_{p,q}(\Omega)$, we have
\[
\lambda_{p,q}(\Omega)\,\left(\int_\Omega |u|^q\,dx\right)^\frac{p}{q}\le \int_\Omega |\nabla u|^p\,dx.
\]
By joining the last two equations, we get in particular that
\[
\lim_{n\to\infty}\min\left\{A_n^\frac{p}{q},\|u_n-u\|_{L^{q}(\Omega)}^p\right\}=0.
\]
By recalling that $A_n$ is uniformly bounded from below, the previous fact implies that $\{u_n\}_{n\in\mathbb{N}}$ converges strongly in $L^q(\Omega)$ to $u$, i.e. we prove \eqref{volio}. This proves that $u$ is an extremal for $\lambda_{p,q}(\Omega)$.
\par
Once we get that $u$ is an extremal, we can go back to \eqref{BL3} and obtain that 
\[
\lim_{n\to\infty} \|\nabla u_n-\nabla u\|_{L^p(\Omega)}=0.
\]
On account of the assumption $\lambda_p(\Omega)>0$, this in turn implies that $u_n$ converges to $u$ strongly in $L^p(\Omega)$, as well. Hence, the contractivity of the Steiner symmetrization
and the Steiner symmetry of each $u_n$ gives
\[
\begin{split}
0=\lim_{n\to\infty}\|u_n-u\|_{L^p(\Omega)}&\ge \limsup_{n\to\infty} \|\mathcal{S}_{\mathbf{e}_i}(u_n)-\mathcal{S}_{\mathbf{e}_i}(u)\|_{L^p(\Omega)}\\
&=\limsup_{n\to\infty} \|u_n-\mathcal{S}_{\mathbf{e}_i}(u)\|_{L^p(\Omega)}\ge 0,\qquad \text{for every}\ i\in\{1,\dots,N\}.
\end{split}
\]
By uniqueness of the limit, we obtain that $u=\mathcal{S}_{\mathbf{e}_i}(u)$ for every $i\in\{1,\dots,N\}$, as desired.
 This eventually concludes the proof.
\end{proof}
By a limit argument, we get the existence of an optimal function at the endpoint $q=\infty$, as well.
\begin{teo}[Case $q=\infty$]
\label{teo:maininfty}
Let $N<p<\infty$  and let $\Omega\in\mathfrak{S}^N$ be such that $\Omega\not= \mathbb R^N$.
For every $p<q<\infty$, let $u_q\in W^{1,p}_0(\Omega)$ be a Steiner symmetric function 
such that  
\[
\|u_q\|_{L^q(\Omega)}=1,\qquad \lambda_{p,q}(\Omega)=\int_\Omega |\nabla u_q|^p\,dx.
\]
Then, there exists a Steiner symmetric function $u_\infty\in W^{1,p}_0(\Omega)$ such that
\begin{equation}\label{limiteqinfty}
\lim_{q\to \infty} \|u_q-u_\infty\|_{W^{1,p}(\Omega)}=0.
\end{equation}
Moreover, the function $u_\infty$ is the unique Steiner symmetric extremal of $\lambda_{p,\infty}(\Omega)$. Finally, we have 
\[
\lambda_{p,q}(\Omega)\,(u_{q})^{q-1}\ \stackrel{q\to\infty}{\rightharpoonup}\ \lambda_{p,\infty}(\Omega)\,\delta_0,\qquad \mbox{in}\ \mathscr{D}'(\Omega).
\]
\end{teo}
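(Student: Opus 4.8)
The plan is to pass to the limit $q\to\infty$ using a priori estimates uniform in $q$, exploiting that $p>N$ (so $W^{1,p}_0(\Omega)$ embeds continuously in $C^{0,1-N/p}(\mathbb{R}^N)$ after extension by zero, and $\delta_x\in W^{-1,p'}(\Omega)$) and that each $u_q$, being Steiner symmetric, attains its maximum at the origin, $u_q(0)=\|u_q\|_{L^\infty(\Omega)}$. First I would record the geometric facts: $\Omega\in\mathfrak{S}^N$, $\Omega\neq\mathbb{R}^N$ gives $r_\Omega<\infty$ and $B_{r_\Omega}\subseteq\Omega$ (Lemma~\ref{lemmainradSN}) and $\lambda_p(\Omega)>0$ (Proposition~\ref{prop:positivity}), so $\|\nabla\cdot\|_{L^p(\Omega)}$ is an equivalent norm on $W^{1,p}_0(\Omega)$ and $\|\nabla u_q\|_{L^p(\Omega)}^p=\lambda_{p,q}(\Omega)$. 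Using $B_{r_\Omega}\subseteq\Omega$, the scaling $\lambda_{p,q}(B_{r_\Omega})=r_\Omega^{N-p-pN/q}\lambda_{p,q}(B_1)$, and the elementary convergence $\lambda_{p,q}(B_1)\to\lambda_{p,\infty}(B_1)$ on the bounded ball, I obtain $\lambda_{p,q}(\Omega)\le K$ for all large $q$, hence $\{u_q\}$ is bounded in $W^{1,p}_0(\Omega)$ and (after extension) in $C^{0,1-N/p}(\mathbb{R}^N)$.

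The core analytic step is the pair of scalar limits $\lambda_{p,q}(\Omega)\to\lambda_{p,\infty}(\Omega)$ and $\|u_q\|_{L^\infty(\Omega)}\to1$. The bound $\limsup_q\lambda_{p,q}(\Omega)\le\lambda_{p,\infty}(\Omega)$ is immediate, testing $\lambda_{p,q}(\Omega)$ against a fixed $\varphi\in C^\infty_0(\Omega)$ and using $\|\varphi\|_{L^q(\Omega)}\to\|\varphi\|_{L^\infty(\Omega)}$. For the reverse bound, by definition of $\lambda_{p,\infty}$ we have $\lambda_{p,q}(\Omega)=\|\nabla u_q\|_{L^p(\Omega)}^p\ge\lambda_{p,\infty}(\Omega)\|u_q\|_{L^\infty(\Omega)}^p$, so everything reduces to bounding $\|u_q\|_{L^\infty(\Omega)}$ from below; here Steiner symmetry gives $u_q\le\|u_q\|_{L^\infty(\Omega)}$ a.e.\ and the Poincar\'e inequality yields
\[ 1=\|u_q\|_{L^q(\Omega)}^q=\int_\Omega u_q^q\,dx\le\|u_q\|_{L^\infty(\Omega)}^{q-p}\|u_q\|_{L^p(\Omega)}^p\le\|u_q\|_{L^\infty(\Omega)}^{q-p}\,\frac{K}{\lambda_p(\Omega)}, \]
whence $\|u_q\|_{L^\infty(\Omega)}\ge(K/\lambda_p(\Omega))^{-1/(q-p)}\to1$. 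I expect this to be the main obstacle: it is precisely here that the hypotheses bite, the uniform Poincar\'e inequality $\lambda_p(\Omega)>0$ (available via Steiner symmetry and the finite inradius) being what prevents the extremals from losing mass at infinity. The two bounds together give both limits.

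Next I would extract a limit: given any $q_k\to\infty$, weak compactness in $W^{1,p}_0(\Omega)$ and the Arzel\`a--Ascoli theorem (on balls, plus a diagonal argument in the radius) yield a subsequence with $u_{q_k}\rightharpoonup u_\infty$ in $W^{1,p}_0(\Omega)$ and $u_{q_k}\to u_\infty$ locally uniformly. As a locally uniform limit of Steiner symmetric functions $u_\infty$ is Steiner symmetric, so $u_\infty(0)=\|u_\infty\|_{L^\infty(\Omega)}$; passing to the limit in $u_{q_k}(0)=\|u_{q_k}\|_{L^\infty(\Omega)}\to1$ gives $\|u_\infty\|_{L^\infty(\Omega)}=1$, so $u_\infty\neq0$. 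Lower semicontinuity and the definition of $\lambda_{p,\infty}(\Omega)$ sandwich $\|\nabla u_\infty\|_{L^p(\Omega)}^p$ between $\lambda_{p,\infty}(\Omega)$ and $\liminf_k\lambda_{p,q_k}(\Omega)=\lambda_{p,\infty}(\Omega)$, so $u_\infty$ is a Steiner symmetric extremal; convergence of the energies with weak convergence and uniform convexity of $L^p$ then upgrades to $\nabla u_{q_k}\to\nabla u_\infty$ strongly in $L^p(\Omega)$, hence $u_{q_k}\to u_\infty$ strongly in $W^{1,p}_0(\Omega)$.

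To turn this into \eqref{limiteqinfty} I would prove that the Steiner symmetric extremal of unit $L^\infty$ norm is unique, so that every subsequence of $\{u_q\}$ has a further subsequence converging in $W^{1,p}_0(\Omega)$ to this common $u_\infty$. By Proposition~\ref{prop:francesca} such an extremal $v$ has constant sign, and being Steiner symmetric and $\ge0$ it attains $\|v\|_{L^\infty(\Omega)}=v(0)=1$ at the origin; away from $0$ it is $p$-harmonic and vanishes on $\partial\Omega$, so by the strong maximum principle $0$ is its unique maximum point, and the Euler--Lagrange equation recalled before Lemma~\ref{lm:Linfty} becomes $-\Delta_p v=\lambda_{p,\infty}(\Omega)\,\delta_0$ in $W^{-1,p'}(\Omega)$ (meaningful since $p>N$), which admits a unique solution in $W^{1,p}_0(\Omega)$ by strict monotonicity of $-\Delta_p$. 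This gives \eqref{limiteqinfty} and that $u_\infty$ is \emph{the} Steiner symmetric extremal. Finally, the distributional statement follows by passing to the limit in the weak form of $-\Delta_p u_q=\lambda_{p,q}(\Omega)u_q^{q-1}$: for $\varphi\in C^\infty_0(\Omega)$ its left-hand side is $\int_\Omega|\nabla u_q|^{p-2}\langle\nabla u_q,\nabla\varphi\rangle\,dx$, which by the strong $W^{1,p}_0$ convergence (continuity of $\xi\mapsto|\xi|^{p-2}\xi$ from $L^p$ to $L^{p'}$) tends to $\int_\Omega|\nabla u_\infty|^{p-2}\langle\nabla u_\infty,\nabla\varphi\rangle\,dx=\lambda_{p,\infty}(\Omega)\,\varphi(0)=\langle\lambda_{p,\infty}(\Omega)\,\delta_0,\varphi\rangle$, the middle equality being the Euler--Lagrange equation for $u_\infty$.
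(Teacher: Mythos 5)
Your proposal is correct and follows the same overall strategy as the paper's proof: uniform $W^{1,p}_0$ and Morrey--H\"older bounds on $\{u_q\}$ via the finite inradius, extraction of a weak $W^{1,p}_0$ and locally uniform limit, identification of $u_\infty$ as an extremal by combining the lower bound on $\|u_q\|_{L^\infty(\Omega)}$ (derived exactly as you do from $\|u_q\|_{L^q(\Omega)}=1$, interpolation and the Poincar\'e inequality) with weak lower semicontinuity, upgrading to strong $W^{1,p}_0$ convergence by uniform convexity, and finally passing to the limit in the weak Euler--Lagrange equation. The one genuinely different ingredient is the \emph{uniqueness} of the Steiner symmetric extremal: the paper observes that if $v\not=u_\infty$ were a second one, then $(u_\infty+v)/2$ would be admissible (its $L^\infty$ norm equals $1$ because both attain their maximum at the origin), contradicting the strict convexity of $\varphi\mapsto\int_\Omega|\nabla\varphi|^p\,dx$; you instead deduce from Lemma~\ref{lm:maxpoint} and Proposition~\ref{prop:francesca} that every such extremal solves $-\Delta_p v=\lambda_{p,\infty}(\Omega)\,\delta_0$ in $W^{-1,p'}(\Omega)$, and invoke the strict monotonicity of $-\Delta_p$ on $W^{1,p}_0(\Omega)$. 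Both routes are valid and rely on essentially the same background; the convexity argument is marginally lighter since it bypasses Proposition~\ref{prop:francesca} and the well-posedness of the Dirac-forced problem, whereas your monotonicity argument is more explicitly PDE-theoretic. Two cosmetic remarks: the appeal to the strong maximum principle to identify the maximum point is redundant, since Lemma~\ref{lm:maxpoint} already asserts its uniqueness (and Steiner symmetry forces it to be the origin); and your explicit derivation of the two-sided bound $\lambda_{p,q}(\Omega)\to\lambda_{p,\infty}(\Omega)$ is a useful clarification of a step the paper treats somewhat tersely inside the chain of inequalities leading to $\int_\Omega|\nabla u_\infty|^p\,dx=\lambda_{p,\infty}(\Omega)$.
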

\begin{proof} 
First of all, we note that the  existence of  $u_q\in W^{1,p}_0(\Omega)$ as in the statement  is ensured by Theorem \ref{teo:main}. We also observe that $\lambda_p(\Omega)>0$ thanks to Proposition \ref{prop:positivity}. Thus, by Poincar\'e inequality we have that
\[
u\mapsto \|\nabla u\|_{L^p(\Omega)},
\]
is an equivalent norm on $W^{1,p}_0(\Omega)$.
By Sobolev embeddings in the case $p>N$, we have that 
\[
W^{1,p}_0(\Omega)\hookrightarrow C^{0,\alpha_p}(\overline{\Omega}),\qquad \text{with}\ \alpha_p=1-\frac{N}{p}.
\]
 More precisely, by Morrey's inequalities, for every $q>p$ we have that
\begin{equation}\label{eq:morrey1}
			|u_q(x)-u_q(y)|\le \mathcal{M}_1\,\|\nabla u_q\|_{L^p(\Omega)}\, |x-y|^{\alpha_p}, \qquad \text{for every}\ x,y\in\mathbb{R}^N,
		\end{equation}
		and 
		\begin{equation}
		\label{eq:morrey2}
			|u_q(x)|\le \mathcal{M}_2\, \|\nabla u_q\|_{L^p(\Omega)}^\frac{N}{p}\,\|u_q\|_{L^p(\Omega)}^{1-\frac N p}, \qquad \mbox{ for every } x\in\mathbb{R}^N. 
		\end{equation}
On account of the monotonicity of $\lambda_{p,q}(\Omega)$ with respect to the set inclusion and the fact that $\Omega$ has finite inradius (thanks to Lemma \ref{lemmainradSN}), we have that
\[
\int_\Omega |\nabla u_q|^p\,dx=\lambda_{p,q}(\Omega)\le \frac{\lambda_{p,q}(B_1)}{r_\Omega^{p+\frac{p}{q}\,N-N}},\qquad \text{for every}\ p< q< \infty.
\]
By using that $q\mapsto \lambda_{p,q}(B_1)$ is continuous on every compact interval (see \cite[Theorem 1]{AFI} and \cite{Er}) and that by
\cite[Corollary 6.2]{BraPriZag2} 
	\begin{equation}
	\label{boundgrad}
\lim_{q\to\infty}\lambda_{p,q}(B_1)=\lambda_{p,\infty}(B_1)<+\infty,
	\end{equation}
we thus obtain that the family  $\{u_q\}_{q>p}$ is bounded in $W^{1,p}_0(\Omega)$. 
By using this fact in conjunction with  \eqref{eq:morrey1} and \eqref{eq:morrey2}, we get that the family $\{u_q\}_{q>p}$ is equicontinuous and bounded in $C^0_{\rm b}(\overline\Omega)$, the space of continuous and bounded functions over $\overline\Omega$.  
 By applying the Ascoli-Arzel\`a Theorem on the compact set $\overline{B_{r_\Omega}}\subseteq\overline\Omega$ and the reflexivity of the space  $W^{1,p}_0(\Omega)$, we have that there exists a sequence $\{u_{q_n}\}_{n\in\mathbb{N}}\subseteq \{u_q\}_{q>p}$ and a function $u_{\infty}\in W^{1,p}_0(\Omega)\cap C^0(\overline{B_{r_\Omega}})$  such that  $\{u_{q_n}\}_{n\in\mathbb{N}}$  weakly converges to $u_{\infty}$ in $W^{1,p}_0(\Omega)$ and  uniformly on the compact set $\overline{B_{r_\Omega}}$.
 \par
By the properties of $u_q$, we have
\[
\int_\Omega |\nabla u|^p\,dx=\lambda_{p,q}(\Omega)=\lambda_{p,q}(\Omega)\,\int_\Omega |u|^q\,dx\le \lambda_{p,q}(\Omega)\,\|u_q\|_{L^\infty(\Omega)}^{q-p}\,\int_\Omega |u_q|^p\,dx.
\]
By using the Poincar\'e inequality on the left-hand side and then simplifying the $L^p$ norm of $u_q$, we obtain with simple manipulations
\[
\left(\frac{\lambda_{p}(\Omega)}{\lambda_{p,q}(\Omega)}\right)^\frac{1}{q-p}\le \|u_q\|_{L^\infty(\Omega)}.
\]
 By the Steiner symmetry, we know that $u_q$ is maximal at the origin. In particular, we get that 
 \[
u_{q_n}(0)\geq  \left(\frac{\lambda_{p}(\Omega)}{\lambda_{p,q_n}(\Omega)}\right)^\frac{1}{q_n-p} \qquad \text{for every}\ n\in\mathbb{N}.
\] 
By passing to the limit as $n$ goes to $\infty$, we obtain 
\[
u_{\infty}(0)=\lim_{n\to \infty}
u_{q_n}(0)\geq  1.
\]
Hence $u_{\infty}\not\equiv0$ and 
\begin{equation}\label{boundnorma}
\|u_\infty\|_{L^{\infty}(\Omega)}\ge 1.
\end{equation}
On the other hand, by using the definition of $\lambda_{p,\infty}(\Omega)$, the weak convergence in $W^{1,p}(\Omega)$ of the sequence $\{u_{q_n}\}_{n\in\mathbb{N}}$ and \eqref{boundgrad} we get
\begin{equation}\label{boundgrad2}
\lambda_{p,\infty}(\Omega)\,\|u_\infty\|^{p}_{L^{\infty}(\Omega)}\le \int_{\Omega}|\nabla u_\infty|^p\,dx\le \lim_{n\to \infty}\int_{\Omega}|\nabla u_{q_n}|^p\,dx= \lim_{n\to \infty}\lambda_{p,q_n}(\Omega)=\lambda_{p,\infty}(\Omega).
\end{equation}
By combining \eqref{boundnorma} and \eqref{boundgrad2}, we deduce
\[
\|u_\infty\|_{L^{\infty}(\Omega)}= 1 \qquad\text{and}\qquad \int_{\Omega}|\nabla u_{\infty}|^p\,dx=\lambda_{p,\infty}(\Omega),
\]
showing at first that $u_\infty$ is an extremal for $\lambda_{p,\infty}(\Omega)$. 
\par
We can now improve the convergence of the sequence $\{u_{q_n}\}_{n\in\mathbb{N}}$. Indeed, the minimality of $u_\infty$ implies that all the inequalities in \eqref{boundgrad2} are actually equalities. In particular, we get 
\[
\lim_{n\to\infty}\|\nabla u_{q_n}\|_{L^{p}(\Omega)}= \|\nabla u_{\infty}\|_{L^{p}(\Omega)}.
\]
By Clarkson's inequalities, this in turn yields that 
\[
\lim_{n\to \infty}\|\nabla u_{q_n}-\nabla u_{\infty}\|_{L^{p}(\Omega)}=0,
\]
which gives the claimed compactness in $W^{1,p}_0(\Omega)$. 
\par
We also observe that, by the same argument used at the end of the proof of Theorem \ref{teo:main}, we have that $u_{\infty}$ is Steiner symmetric. We wish to prove that it is the {\it unique} Steiner symmetric extremal.
By Lemma \ref{lm:maxpoint}, we have that it solves
\[
\int_\Omega \langle |\nabla u_{\infty}|^{p-2}\,\nabla u_{\infty},\nabla \varphi\rangle\,dx=\lambda_{p,\infty}(\Omega)\,\varphi(0),\qquad \text{for every}\ \varphi\in W^{1,p}_0(\Omega),
\]
and $x=0$ is the unique maximum point of $u_{\infty}$ (the fact that it concides with the origin follows from the Steiner symmetry). 
Let us assume that there exists another $v\not=u_{\infty}$ which is a Steiner symmetric  extremal of $\lambda_{p,\infty}(\Omega)$. Then 
\[
\left\|\frac{u_{\infty}+v}{2}\right\|_{L^\infty(\Omega)}= \frac{u_{\infty}(0)+v(0)}{2}=1,
\] 
i.e. $(u_{\infty}+v)/2$ would be an admissible test function for $\lambda_{p,\infty}(\Omega)$. By the strict convexity of the $p-$Dirichlet integral we would get a contradiction. Thus, we obtain that $u_{\infty}$ is the unique Steiner symmetric  extremal of $\lambda_{p,\infty}(\Omega)$: accordingly,  
every convergent sequence  $\{u_{q_n}\}_{n\in\mathbb{N}}$  converges to the same limit $u_{\infty}$.
This gives the convergence of the whole family, i.e. we have \eqref{limiteqinfty}.
\par
Finally, by using \eqref{limiteqinfty}, we obtain for every $\varphi\in C^\infty_0(\Omega)$
 \[
\begin{split}
\lim_{q\to\infty} \lambda_{p,q}(\Omega)\,\int_\Omega u_{q}^{q-1}\,\varphi\,dx&=\lim_{n\to\infty}\int_\Omega \langle |\nabla u_{q}|^{p-2}\,\nabla u_{q},\nabla \varphi\rangle\,dx\\
&=\int_\Omega \langle |\nabla u|^{p-2}\,\nabla u,\nabla \varphi\rangle\,dx=\lambda_{p,\infty}(\Omega)\,\varphi(0),
\end{split} 
\]
which gives that
\[
\lambda_{p,q}(\Omega)\,(u_{q})^{q-1}\ \stackrel{q\to\infty}{\rightharpoonup}\ \lambda_{p,\infty}(\Omega)\,\delta_0,\qquad \mbox{in}\ \mathscr{D}'(\Omega).
\]
This concludes the proof.
\end{proof}

\section{Examples and counter-examples}
\label{sec:6}

We start by showing a couple of examples to which our main result applies.
\begin{exa}\label{ex.strip}
Let $1\le k\le N-1$, a noteworthy application of Theorem \ref{teo:main} is to open sets of the type 
\[
S=\mathbb{R}^{N-k}\times \omega,
\]
where $\omega\subseteq\mathbb{R}^k$ is an open set such that $\omega\in \mathfrak{S}^k$ and $\omega\neq\mathbb{R}^k$. Clearly $S\in\mathfrak{S}^N$ with $S\neq\mathbb{R}^N$:
  by virtue of Theorem \ref{teo:main} and Theorem \ref{teo:maininfty}, we deduce that $\lambda_{p,q}(S)$ is attained, when  $1<p<\infty$ and   $q>p$ satisfies \eqref{pq}.
In particular, for $q\neq\infty$ satisfying  \eqref{pq}, Theorem \ref{teo:main} ensures the existence of a positive solution to the boundary value problem
\[
-\Delta_p u=\lambda\,u^{q-1}\quad \text{in}\ S,\qquad u\in W^{1,p}_0(S),
\]
for every $\lambda>0$. This applies for example to the slab
\[
S=\mathbb{R}^{N-1}\times(-1,1),
\]
thus generalizing the result obtained in \cite{AmTo}, where existence is shown in the case  $N=2$, $k=1$ and  $p=2< q<\infty$. Notice that, in this example,  the assumption $p<q$ is crucial, existence failing when $p=q$. 

\end{exa}

\begin{exa}[Infinite cross]
Another example of application is given by the set
\[
C=\left((-1,1)\times\mathbb{R}\right)\cup \left( \mathbb{R}\times(-1,1)\right)\subseteq\mathbb{R}^2.
\]
This is a two-dimensional Steiner symmetric open set: accordingly, there exists a minimizer for $\lambda_{p,q}(C)$, for every $1<p<\infty$ and $q>p$ satisfying \eqref{pq}. As above, the superhomogeneous Lane-Emden equation
\[
-\Delta_p u=\lambda\,u^{q-1}\quad \text{in}\ C,
\]
admits a positive solution $u\in W^{1,p}_0(\Omega)$, for every $\lambda>0$. Here the situation is slightly different with respect to Example \ref{ex.strip}: indeed, observe that for $p=q=2$ we still have existence of an extremal, see for example \cite[Example 3.2]{BBO}.
\end{exa}


We now wish to show the optimality of the assumptions (S1) and (S2) in Definition \ref{S}, in the case of unbounded sets for which the embedding $W^{1,p}_0(\Omega)\hookrightarrow L^p(\Omega)$ fails to be compact.
Namely, we give two simple counterexamples, aiming at showing that if we drop one of the assumptions, then the conclusion of Theorem \ref{teo:main} may fail to hold.

\begin{exa}[Sets satisfying (S2), but not satisfying (S1)]\label{Halfstrip} 
We consider a set of the form
\[
S=\omega\times \mathbb{R},
\]
where the open set $\omega\subsetneq\mathbb{R}^{N-1}$ is such that $\omega\in\mathfrak{S}^{N-1}$. 
We also set
\[
S^+=\omega\times (0,+\infty),
\]
which satisfies the second assumption (S2) of Definition \ref{S}, but it does not satisfy the first one (S1). We claim that $\lambda_{p,q}(S^+)$ {\it is not} attained in $W^{1,p}_0(S^+)$, when $p<q$ satisfy \eqref{pq}.
To this aim, we start by noticing that 
\begin{equation}
\label{uguaslab}
\lambda_{p,q}(S)=\lambda_{p,q}(S^+).
\end{equation}
Indeed, the fact that $\lambda_{p,q}(S)\le \lambda_{p,q}(S^+)$ simply follows by the inclusion $S^+\subseteq S$ and the monotonicity of $\lambda_{p,q}$. On the other hand, for every $\varepsilon>0$ there exists $\phi_\varepsilon\in C^\infty_0(S)$  such that
\[
\lambda_{p,q}(S)+\varepsilon\ge \int_S |\nabla \phi_\varepsilon|^p\,dx,\qquad \| \phi_\varepsilon\|_{L^q(S)}=1.
\]
Since $\phi_\varepsilon$ is compactly supported, its support is contained in $\omega\times (-L,+\infty)$, for $L>0$ large enough. Therefore, by translation invariance of the sharp Poincar\'e-Sobolev inequality, we have
\[
\lambda_{p,q}(S)+\varepsilon\ge \lambda_{p,q}(\omega\times (-L,+\infty))=\lambda_{p,q}(S^+).
\]
By the arbitrariness of $\varepsilon>0$, we deduce that $\lambda_{p,q}(S)\ge \lambda_{p,q}(S^+)$, as well.
\par 
By exploiting \eqref{uguaslab}, it is not difficult to obtain that  $\lambda_{p,q}(S^+)$ does not admit an extremal. It is sufficient to reproduce the same argument of \cite[Remark 5.5]{B2}: any positive extremal $u$ for $\lambda_{p,q}(S_+)$ would be an extremal for $\lambda_{p,q}(S)$, as well, thanks to \eqref{uguaslab}. In particular, $u\not\equiv0$ would be a $p-$superharmonic function in $S$, identically vanishing on the set $\omega\times(-\infty,0)$. This would violate the minimum principle.
\end{exa}

\begin{exa}[Sets satisfying (S1), but not satisfying (S2)]
\label{exa:pinchedslab}
By keeping the same notation as in Example \ref{Halfstrip}, we first observe that for every open set $\Omega\subseteq\mathbb{R}^N$  such that
\begin{equation}
\label{bulla}
y_0+S^+\subseteq\Omega\subsetneq S\qquad \text{and}\qquad |S\setminus \Omega|>0,
\end{equation}
the constant $\lambda_{p,q}(\Omega)$ can not be attained.
Indeed, by monotonicity we have
\[
\lambda_{p,q}(S^+)\le \lambda_{p,q}(\Omega)\le \lambda_{p,q}(S),
\]
and thus $\lambda_{p,q}(\Omega)=\lambda_{p,q}(S)$, thanks to \eqref{uguaslab}. By exploiting the same argument based on the minimum principle as in Example \ref{Halfstrip}, we get that $\lambda_{p,q}(\Omega)$ can not admit an extremal.
\par
As an explicit example of this situation, we can take the following {\it pinched slab}
\[
\Omega^-_\varepsilon=\left\{(x',x_N)\in\mathbb{R}^{N-1}\times \mathbb{R}:\ |x_N|< 1-\varepsilon\,\varphi\left(\frac{|x'|}{\varepsilon}\right)\right\},\qquad 0<\varepsilon<1,
\]
where $\varphi\in C^\infty_0((-1,1))\setminus\{0\}$ is non-negative even function, monotone decreasing on $[0,1)$ and  with $\varphi(0)=1$. Observe that the set $\Omega_\varepsilon$ satisfies the first assumption (S1) of Definition \ref{S}, but it fails to satisfy (S2): indeed, it is not convex in the directions $\{\mathbf{e}_1,\dots,\mathbf{e}_{N-1}\}$, because of the vertical inward pinching. It is not difficult to see that $\Omega^-_\varepsilon$ satisfies \eqref{bulla}, thus the constant $\lambda_{p,q}(\Omega^-_\varepsilon)$ is not attained.
\par
On the contrary, it is worth observing that the set
\[
\Omega^+_\varepsilon=\left\{(x',x_N)\in\mathbb{R}^{N-1}\times \mathbb{R}:\ |x_N|< 1+\varepsilon\,\varphi\left(\frac{|x'|}{\varepsilon}\right)\right\},\qquad 0<\varepsilon<1,
\]
satisfies both (S1) and (S2), thus $\lambda_{p,q}(\Omega^+_\varepsilon)$ is attained, for $q>p$ satisfying \eqref{pq}.
\begin{figure}
\includegraphics[scale=.3]{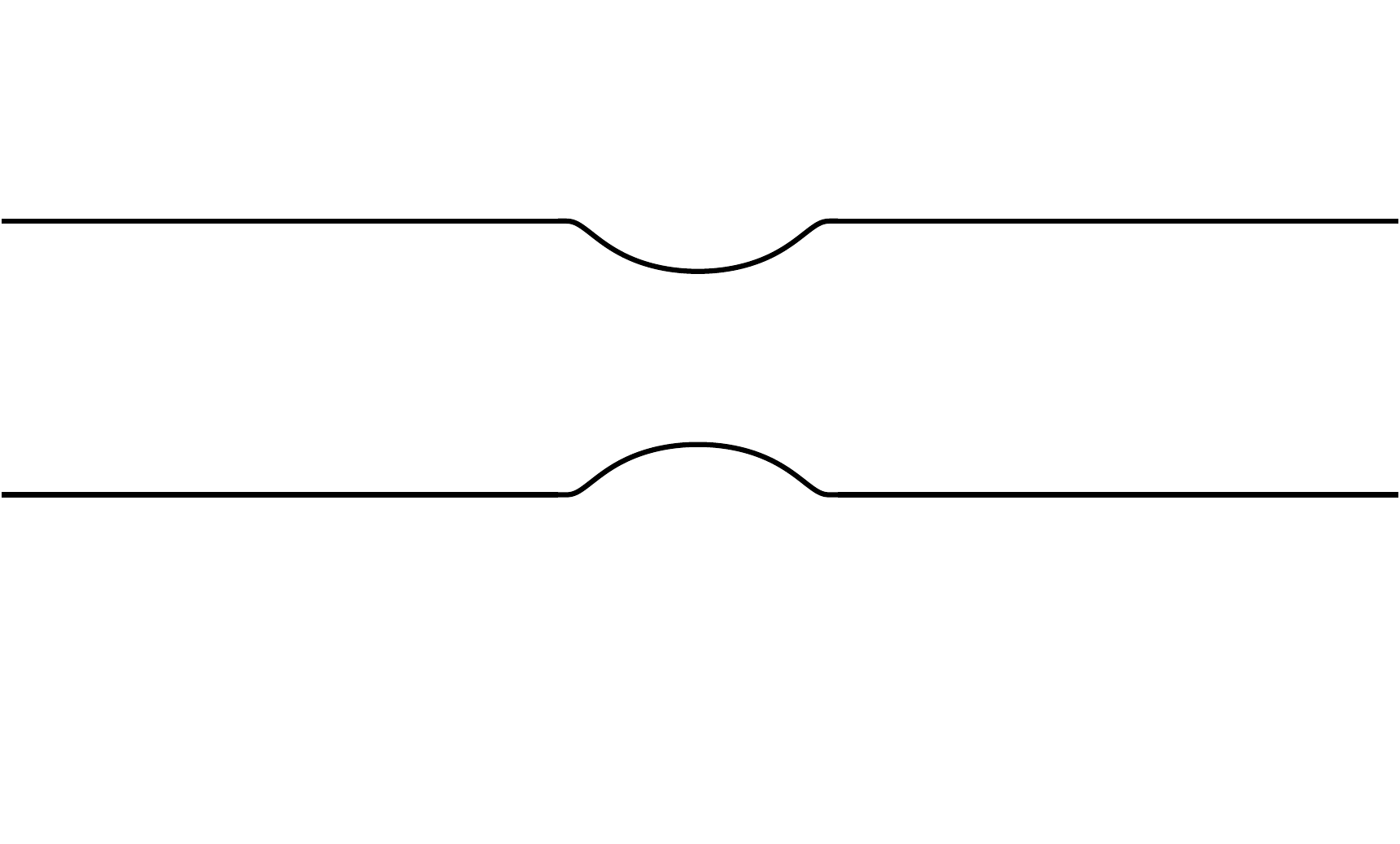}
\caption{The pinched slab $\Omega^-_\varepsilon$ of Example \ref{exa:pinchedslab}, with inward pinching. The constant $\lambda_{p,q}(\Omega^-_\varepsilon)$ is not attained.}
\end{figure}
\begin{figure}
\includegraphics[scale=.3]{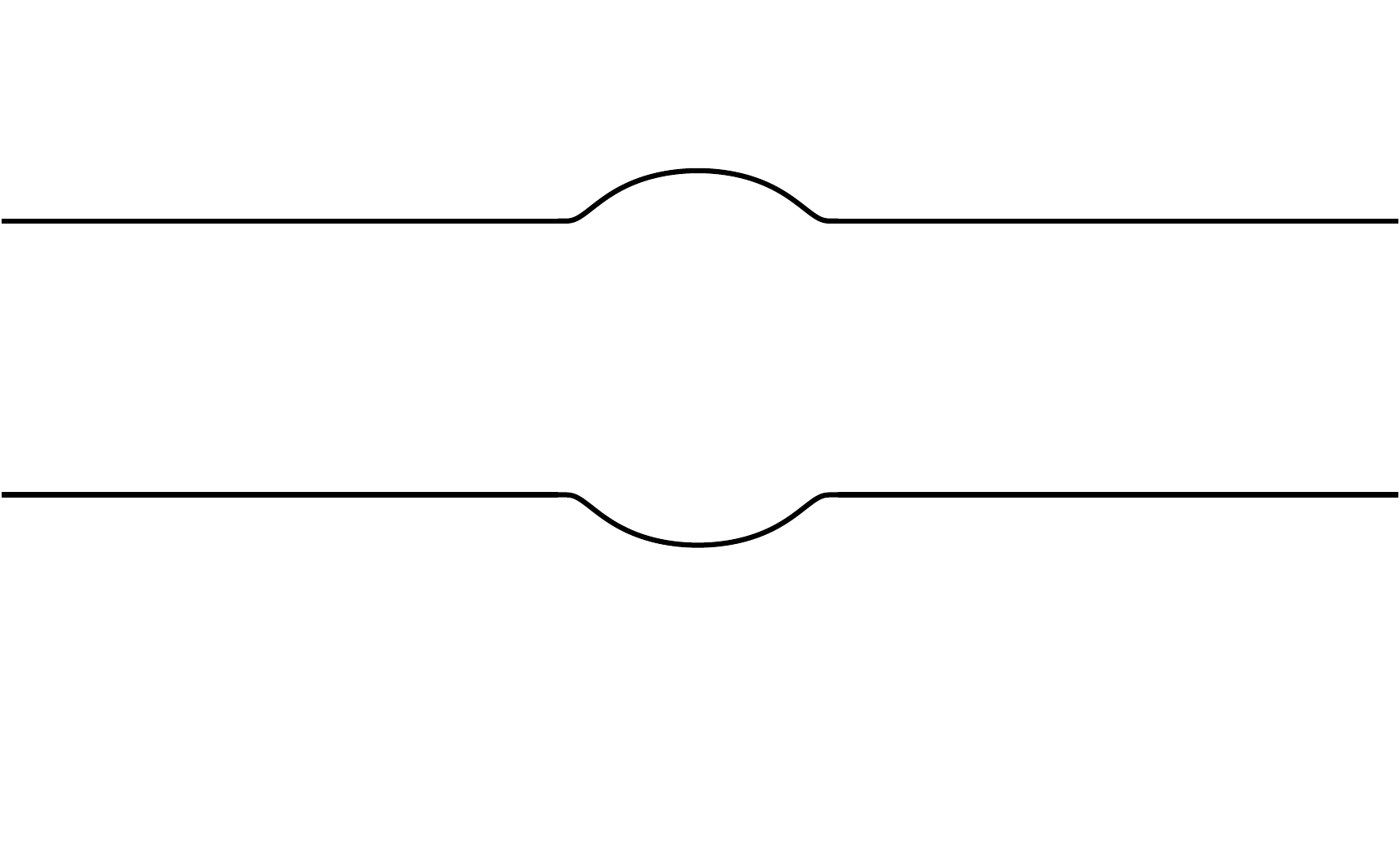}
\caption{The pinched slab $\Omega^+_\varepsilon$ of Example \ref{exa:pinchedslab}, with outward pinching. This is a Steiner symmetric set and the constant $\lambda_{p,q}(\Omega^+_\varepsilon)$ is attained, thanks to Theorems \ref{teo:main} and \ref{teo:maininfty}.}
\end{figure}
\end{exa}

\section{Proof of the Main Theorem: decay at infinity}
\label{sec:7}

In this section, we prove that the positive extremals of $\lambda_{p,q}(\Omega)$ exponentially decay to zero at infinity.  
This will be the consequence of a more general result, valid for non-negative subsolutions of the Lane-Emden equation, in generic open sets.
\subsection{Subsolutions of the Lane-Emden equation}
The following result is a classical $L^\infty-L^q$ estimate, but ``localized at infinity''. It implies a weak decay at infinity in uniform norm.
\begin{lm}
\label{lm:LinftyLp_localized}
Let $1<p<\infty$ and  $p\le q<\infty$ satisfying \eqref{pq}. 
Let $\Omega\subseteq\mathbb{R}^N$ be an open set such that $
\lambda_p(\Omega)>0. $
Let $u\in W^{1,p}_0(\Omega)$ be a non-negative weak subsolution of
\[
-\Delta_p u=\lambda\,u^{q-1},\qquad \text{in}\ \Omega,
\]
for some $\lambda>0$. Then for every $\varrho>0$, we have 
\begin{equation}
\label{stimafuoripalla}
\|u\|_{L^\infty(\Omega\setminus B_{\varrho+1})}\le C_4\,\left(1+\lambda\,\|u\|_{L^\infty(\Omega)}^{q-p}\right)^\frac{N}{p\,q}\,\|u\|_{L^q(\Omega\setminus B_{\varrho})},
\end{equation}
for some $C_4=C_4(N,p,q)>0$.
\end{lm}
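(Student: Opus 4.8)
The plan is to run a De Giorgi--Moser iteration ``localized at infinity''. For $k\ge0$ set $\varrho_k:=\varrho+1-2^{-k}$ and $A_k:=\Omega\setminus\overline{B_{\varrho_k}}$, so that $A_0=\Omega\setminus\overline{B_\varrho}$, $A_{k+1}\subseteq A_k$ and $\bigcap_{k}A_k=\Omega\setminus B_{\varrho+1}$. By Lemma~\ref{lm:Linfty} we have $u\in L^\infty(\Omega)$, whence $u^{q-1}\le\|u\|_{L^\infty(\Omega)}^{q-p}\,u^{p-1}$ and $u$ is a non-negative weak subsolution of the $p$-homogeneous inequality
\[
-\Delta_p u\le \Lambda\,u^{p-1}\quad\text{in }\Omega,\qquad \Lambda:=\lambda\,\|u\|_{L^\infty(\Omega)}^{q-p}.
\]
It therefore suffices to prove $\|u\|_{L^\infty(\Omega\setminus B_{\varrho+1})}\le C\,(1+\Lambda)^{N/(pq)}\,\|u\|_{L^q(\Omega\setminus B_\varrho)}$ with $C=C(N,p,q)$.

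Fix $\chi>1$ (take $\chi=p^*/p$ when $p<N$ and $\chi=2$ when $p=N$; when $p>N$ a single step will suffice). For $k\ge0$ set $\gamma_k:=(q/p)\,\chi^{k}\ge1$ and $\alpha_k:=p(\gamma_k-1)+1\ge1$, so that $p\,\gamma_k=q\,\chi^{k}$, and choose $\eta_k\in C^\infty(\mathbb R^N)$ with $0\le\eta_k\le1$, $\eta_k\equiv0$ on $B_{\varrho_k}$, $\eta_k\equiv1$ on $\mathbb R^N\setminus B_{\varrho_{k+1}}$ and $|\nabla\eta_k|\le C\,2^{k}$. Since $u\in W^{1,p}_0(\Omega)\cap L^\infty(\Omega)$, also $u^{\gamma_k}\in W^{1,p}_0(\Omega)$ and $\eta_k^p\,u^{\alpha_k}\in W^{1,p}_0(\Omega)$ is a legitimate non-negative test function. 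Inserting it into the weak inequality, using Young's inequality to absorb the gradient cross-term into the leading term and writing $w_k:=u^{\gamma_k}$, one gets the Caccioppoli-type bound
\[
\int_{\mathbb R^N}\eta_k^p\,|\nabla w_k|^p\,dx\le C\,\gamma_k^{\,p-1}\big(2^{kp}+\Lambda\big)\int_{A_k}w_k^{\,p}\,dx\le C\,\gamma_k^{\,p-1}\,2^{kp}\,(1+\Lambda)\int_{A_k}w_k^{\,p}\,dx,
\]
with $C=C(p)$, and consequently $\|\nabla(\eta_k w_k)\|_{L^p(\mathbb R^N)}^p\le C\,\gamma_k^{\,p-1}\,2^{kp}\,(1+\Lambda)\int_{A_k}w_k^{\,p}\,dx$. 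Note that $\int_{A_k}w_k^p=\int_{A_k}u^{p\gamma_k}<\infty$, because $p\gamma_k\ge q$ and $u\in L^q(\Omega)\cap L^\infty(\Omega)$.

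Next I would apply to $\psi=\eta_k w_k\in W^{1,p}_0(\mathbb R^N)$ the appropriate Sobolev-type inequality: the Sobolev embedding with exponent $p^*$ when $p<N$, the Gagliardo--Nirenberg inequality \eqref{GNS} with $r=2N$ when $p=N$, and \eqref{GNS} with $r=\infty$ when $p>N$ (a single step $k=0$). Since $\eta_k\equiv1$ on $A_{k+1}$ and $r\gamma_k=p\gamma_{k+1}$, combining with the Caccioppoli bound and setting $J_k:=\|u\|_{L^{p\gamma_k}(A_k)}=\big(\int_{A_k}u^{p\gamma_k}dx\big)^{1/(p\gamma_k)}$ produces a recursion of the form
\[
J_{k+1}\le\Big(C\,\gamma_k^{\,p-1}\,2^{kp}\,(1+\Lambda)\Big)^{\mu/(p\gamma_{k+1})}\,J_k,
\]
where $\mu=\chi$ if $p<N$ and $\mu=1$ if $p=N$ (the discrepancy coming from whether the Sobolev/Gagliardo--Nirenberg step uses only $\|\nabla\psi\|_{L^p}$ or also $\|\psi\|_{L^p}$). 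Iterating from $k=0$, where $p\gamma_0=q$ and $J_0=\|u\|_{L^q(\Omega\setminus B_\varrho)}$: the product of the dimensional factors converges (since $\sum_k k\,\chi^{-k}<\infty$ and $\sum_k \chi^{-k}\log\gamma_k<\infty$) to a constant $C(N,p,q)$, while the geometric-series computation
\[
\sum_{k\ge0}\frac{\mu}{p\,\gamma_{k+1}}=\frac{\mu}{q\,(\chi-1)}=\frac{N}{p\,q}
\]
shows that the factor $1+\Lambda$ is raised exactly to the power $N/(pq)$. Letting $k\to\infty$ and using $J_k=\|u\|_{L^{p\gamma_k}(A_k)}\to\|u\|_{L^\infty(\Omega\setminus B_{\varrho+1})}$ (as $p\gamma_k\to\infty$ and $A_k$ decreases to $\Omega\setminus B_{\varrho+1}$) yields \eqref{stimafuoripalla} with $C_4=C_4(N,p,q)$; for $p>N$ the same estimate follows directly from the single step $k=0$.

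The main obstacle is the bookkeeping of the dependence on $\lambda$ and on $\|u\|_{L^\infty(\Omega)}$: one must bound $2^{kp}+\Lambda\le 2^{kp}(1+\Lambda)$, so that only the fixed quantity $1+\Lambda$ — not a $k$-dependent quantity — is raised to the summable exponents $\propto\chi^{-k}$, and one must start the iteration from the level $L^q$ rather than from the customary $L^p$, because on the infinite-measure set $\Omega\setminus B_\varrho$ the inclusion $L^q\hookrightarrow L^p$ fails; the precise choice $p\gamma_0=q$ is exactly what makes the cumulative exponent of $1+\Lambda$ equal $N/(pq)$, matching the right-hand side of \eqref{stimafuoripalla}. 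A more routine point is justifying that the non-compactly supported functions $\eta_k u^{\gamma_k}$ are admissible as test functions and in the Sobolev/Gagliardo--Nirenberg inequalities, which follows from $u\in W^{1,p}_0(\Omega)\cap L^\infty(\Omega)$ together with the density of $C^\infty_0$ in the homogeneous Sobolev space on $\mathbb R^N$.
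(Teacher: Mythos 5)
Your proposal is correct and follows essentially the same route as the paper: reduce to a $p$-homogeneous subsolution inequality with coefficient $\lambda\,\|u\|_{L^\infty}^{q-p}$, run a Moser iteration with the cut-offs between $B_{\varrho+1-2^{-k}}$ and $B_{\varrho+1-2^{-k-1}}$ and exponents $\gamma_k=(q/p)\,\chi^k$ starting from level $L^q$, distinguish $p<N$, $p=N$, $p>N$ via Sobolev, Gagliardo--Nirenberg, and Morrey respectively, and collect the power $N/(pq)$ of $1+\lambda\|u\|_{L^\infty}^{q-p}$ from the geometric sum $\sum 1/(p\gamma_{k+1})$. The only cosmetic difference is that the paper first keeps the sharper local quantity $\|u\|_{L^\infty(\Omega\setminus B_r)}^{q-p}$ in the Caccioppoli estimate before majorizing it by the global sup norm, whereas you perform that majorization at the outset.
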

\begin{proof}
By using Lemma \ref{lm:Linfty}, we already know that $u\in L^\infty(\Omega)$. Thus, we get that $u$ verifies in particular
\begin{equation}
\label{subsolution}
\int_\Omega \langle |\nabla u|^{p-2}\,\nabla u,\nabla \varphi\rangle\,dx\le \lambda\,\|u\|_{L^\infty(\Omega\setminus B_r)}^{q-p}\,\int_\Omega u^{p-1}\,\varphi\,dx,
\end{equation}
for every $\varphi\in W^{1,p}_0(\Omega\setminus\overline{B_r})$ such that $\varphi\ge 0$ in $\Omega$. We will get the desired $L^\infty-L^q$ estimate by using a standard Moser iteration.
\par
We fix $\varrho>0$ as in the statement and, for every $\varrho\le r<R$, we take a radially symmetric Lipschitz cut-off function $\eta$ such that
\[
0\le \eta\le 1,\qquad \eta\equiv 0 \ \text{on}\ B_r,\qquad \eta\equiv 1\ \text{on}\ \mathbb{R}^N\setminus B_R,\qquad \|\nabla \eta\|_{L^\infty(\mathbb{R}^N)}=\frac{1}{R-r}.
\]
For every $\beta\ge 1$ we insert in \eqref{subsolution} the test function
\[
\varphi=\eta^p\,u^\beta,
\]
which is feasible. With simple algebraic manipulations, we get
\begin{equation}
\label{moser1}
\begin{split}
\beta\,\left(\frac{p}{p+\beta-1}\right)^p\,\int_\Omega\left |\nabla \left(u^\frac{p+\beta-1}{p}\right)\right|^p\,\eta^p\,dx&\le p\,\int_\Omega |\nabla u|^{p-1}\,|\nabla \eta|\,\eta^{p-1}\,u^\beta\,dx\\
&+\lambda\,\|u\|_{L^\infty(\Omega\setminus B_r)}^{q-p}\,\int_\Omega u^{\beta+p-1}\,\eta^p\,dx.
\end{split}
\end{equation}
By using Young's inequality, we get for every $\delta>0$
\begin{equation}
\label{conbeta}
\begin{split}
p\,\int_\Omega |\nabla u|^{p-1}\,|\nabla \eta|\,\eta^{p-1}\,u^\beta\,dx&\le (p-1)\,\delta\,\int_\Omega |\nabla u|^p\,u^{\beta-1}\,\eta^p+\delta^{1-p}\,\int_\Omega |\nabla \eta|^p\,u^{\beta+p-1}\,dx\\
&=(p-1)\,\left(\frac{p}{\beta+p-1}\right)^p\,\delta\,\int_\Omega \left|\nabla \left(u^\frac{\beta+p-1}{p}\right)\right|^p\,\eta^p\\
&+\delta^{1-p}\,\int_\Omega |\nabla \eta|^p\,u^{\beta+p-1}\,dx.
\end{split}
\end{equation}
We choose $\delta=\beta/(2\,(p-1))$, thus from \eqref{moser1} and \eqref{conbeta} we get 
\[
\begin{split}
\frac{\beta}{2}\,\left(\frac{p}{p+\beta-1}\right)^p\,\int_\Omega\left |\nabla \left(u^\frac{p+\beta-1}{p}\right)\right|^p\,\eta^p\,dx&\le \left(\frac{2\,(p-1)}{\beta}\right)^{p-1}\,\int_\Omega |\nabla \eta|^p\,u^{\beta+p-1}\,dx\\
&+\lambda\,\|u\|_{L^\infty(\Omega\setminus B_r)}^{q-p}\,\int_\Omega u^{\beta+p-1}\,\eta^p\,dx.
\end{split}
\]
This can be rewritten as
\begin{equation}
\label{moser3}
\begin{split}
\int_\Omega\left |\nabla \left(u^\frac{p+\beta-1}{p}\right)\right|^p\,\eta^p\,dx&\le \frac{2}{\beta}\,\left(\frac{p+\beta-1}{p}\right)^p\,\left(\frac{2\,(p-1)}{\beta}\right)^{p-1}\,\int_\Omega |\nabla \eta|^p\,u^{\beta+p-1}\,dx\\
&+\lambda\,\|u\|_{L^\infty(\Omega\setminus B_r)}^{q-p}\,\frac{2}{\beta}\,\left(\frac{p+\beta-1}{p}\right)^p\,\int_\Omega u^{\beta+p-1}\,\eta^p\,dx.
\end{split}
\end{equation}
We add on both sides the quantity
\[
\int_\Omega |\nabla \eta|^p\,u^{\beta+p-1}\,dx,
\]
and then use that
\[
\int_\Omega\left |\nabla \left(u^\frac{p+\beta-1}{p}\right)\right|^p\,\eta^p\,dx+\int_\Omega |\nabla \eta|^p\,u^{\beta+p-1}\,dx\ge \frac{1}{2^{p-1}}\,\int_\Omega \left|\nabla\left(\eta\,u^\frac{\beta+p-1}{p}\right)\right|^p\,dx.
\]
On account of \eqref{moser3} we obtain
\[
\begin{split}
\frac{1}{2^{p-1}}\,\int_\Omega \left|\nabla\left(\eta\,u^\frac{\beta+p-1}{p}\right)\right|^p\,dx&\le \left[1+\frac{2}{\beta}\,\left(\frac{p+\beta-1}{p}\right)^p\,\left(\frac{2\,(p-1)}{\beta}\right)^{p-1}\right]\,\int_\Omega |\nabla \eta|^p\,u^{\beta+p-1}\,dx\\
&+\lambda\,\|u\|_{L^\infty(\Omega\setminus B_r)}^{q-p}\,\frac{2}{\beta}\,\left(\frac{p+\beta-1}{p}\right)^p\,\int_\Omega u^{\beta+p-1}\,\eta^p\,dx.
\end{split}
\]
Before proceeding further, we observe that
\[
1\le \frac{1}{\beta}\,\left(\frac{p+\beta-1}{p}\right)^p\le \beta^{p-1}\qquad\text{and}\qquad \frac{2\,(p-1)}{\beta}\le 2\,p.
\]
Thus, from the estimate above we also get
\begin{equation}
\label{moser3b}
\begin{split}
\int_\Omega \left|\nabla\left(\eta\,u^\frac{\beta+p-1}{p}\right)\right|^p\,dx&\le 4^{p}\,\beta^{p-1}\,p^{p-1}\,\int_\Omega |\nabla \eta|^p\,u^{\beta+p-1}\,dx\\
&+\lambda\,\|u\|_{L^\infty(\Omega\setminus B_r)}^{q-p}\,2^p\,\beta^{p-1}\,\int_\Omega u^{\beta+p-1}\,\eta^p\,dx\\
\end{split}
\end{equation}
By recalling the properties of $\eta$, with simple manipulations we get
\begin{equation}
\label{moser4}
\begin{split}
\int_\Omega \left|\nabla\left(\eta\,u^\frac{\beta+p-1}{p}\right)\right|^p\,dx
&\le 4^p\,\beta^{p-1}\,p^{p-1}\,\left[\frac{1}{(R-r)^p}+\lambda\,\|u\|_{L^\infty(\Omega\setminus B_r)}^{q-p}\right]\,\int_{\Omega\setminus B_r} u^{\beta+p-1}\,dx.
\end{split}
\end{equation}
We now need to distinguish three cases, depending on whether $p < N$, $p = N$ or $p > N$.
\vskip.2cm
\noindent  
{\it Case $p<N$}:  we use the Sobolev inequality on the left-hand side of \eqref{moser4}, while simply estimating
\[
\|u\|_{L^\infty(\Omega\setminus B_r)}^{q-p}\le \|u\|_{L^\infty(\Omega)}^{q-p},
\] 
on the right-hand side. We thus end up with
\begin{equation}
\label{moser5}
\begin{split}
T_{N,p}\,\left(\int_{\Omega\setminus B_{R}} u^{\frac{\beta+p-1}{p}p^*}\,dx\right)^\frac{p}{p^*}&\le 4^p\,\beta^{p-1}\,p^{p-1}\,\left[\frac{1}{(R-r)^p}+\lambda\,\|u\|_{L^\infty(\Omega)}^{q-p}\right]\,\int_{\Omega\setminus B_r} u^{\beta+p-1}\,dx.
\end{split}
\end{equation}
For ease of notation, we introduce the parameter
\[
\vartheta=\frac{\beta+p-1}{p},
\]
and observe that $\beta\le p\,\vartheta$. Thus, from \eqref{moser5} we get
\begin{equation}
\label{moser6}
\begin{split}
\left(\int_{\Omega\setminus B_{R}} u^{\vartheta\,p^*}\,dx\right)^\frac{1}{p^*\vartheta}
&\le \left(\frac{4^p\, p^{2\,(p-1)}}{T_{N,p}}\right)^\frac{1}{p\,\vartheta}\,\left(\vartheta^\frac{1}{\vartheta}\right)^\frac{p-1}{p}\\\
&\times\left[\frac{1}{(R-r)^p}+\lambda\,\|u\|_{L^\infty(\Omega)}^{q-p}\right]^\frac{1}{p\,\vartheta}\,\left(\int_{\Omega\setminus B_r} u^{\vartheta\,p}\,dx\right)^\frac{1}{p\,\vartheta}.
\end{split}
\end{equation}
We want to use \eqref{moser6} with the following choices
\begin{equation}\label{rR}
r=r_i=\varrho+\left(1-\frac{1}{2^i}\right),\qquad R=r_{i+1}=\varrho+\left(1-\frac{1}{2^{i+1}}\right),
\end{equation}
and
\[
\vartheta_0=\frac{q}{p},\qquad  \vartheta_{i+1}=\frac{p^*}{p}\,\vartheta_i=\left(\frac{N}{N-p}\right)^{i+1}\,\frac{q}{p},\quad \text{for every}\ i\in\mathbb{N}.
\]
Starting from $i=0$ and iterating \eqref{moser6} infinitely many times, we now get the claimed conclusion,  by noticing that the sequence $\{\vartheta_i\}_{i\in\mathbb{N}}$ diverges to $+\infty$, together with the fact that
\[
\sum_{i=0}^{\infty} \frac{1}{p\,\vartheta_i}=\frac{1}{q}\,\sum_{i=0}^{\infty} \left(\frac{N-p}{N}\right)^i=\frac{N}{p\,q},
\]
and 
\[
\lim_{n\to \infty} \prod_{i=0}^n \vartheta_i^\frac{1}{\vartheta_i}=\exp\left(\sum_{i=0}^{\infty}  \frac{\log \vartheta_i} {\vartheta_i}\right)<+\infty.
\]
\noindent  {\it Case $p=N$}:  in this case, on  the left-hand side of \eqref{moser4}  we can apply the Gagliardo-Nirenberg inequality \ref{GNS} with $p=N$ and $r=2\,N$
\[ 
L_{N}\,\left(\int_{\Omega} |\varphi|^{2\,N}\, dx\right)^{\frac{1}{2}}\leq \left(\int_{\Omega} |\nabla \varphi|^{N}\, dx\right)^{\frac {1}{2}}\, \left(\int_{\Omega} |\varphi|^{N}\, dx\right)^\frac{1}{2},\qquad \text{for every}\ \varphi\in W^{1,N}_0(\Omega).
\]
By introducing the parameter
\[
\vartheta=\frac{\beta+N-1}{N},
\]  we get
\[ \begin{split} L_{N}\left(\int_{\Omega\setminus B_R} u^{2\vartheta N}\, dx\right)^{\frac 1 2}&\leq \left\{
4^p\,\beta^{N-1}\,N^{N-1}\,\left(\frac{1}{(R-r)^N}+\lambda\,\|u\|_{L^\infty(\Omega)}^{q-N}\right) \right\}^{\frac 1 2 }\\\
&\times \left(\int_{\Omega\setminus B_r} u^{\vartheta N}\,dx \right).
\end{split}
\]
The proof proceeds as in the previous case, with the choices \eqref{rR} and 
\[
\vartheta_0=\frac{q}{N},\qquad  \vartheta_{i+1}=2 \,\vartheta_i=2^{i+1}\,\frac{q}{N},\qquad \text{for every}\ i\in\mathbb{N}.
\]
\noindent  {\it Case $p>N$}:  in this case, we apply the second Morrey inequality 
 \[ 
 \|\varphi\|_{L^\infty(\Omega)}\le C_{N,p}\,\left(\int_{\Omega} |\nabla \varphi|^p\,dx\right)^\frac{N}{p^2}\,\left(\int_{\Omega} |\varphi|^p\,dx\right)^{\frac{1}{p}-\frac{N}{p^2}},\qquad \text{for every}\, \varphi\in W^{1,p}_0(\Omega),
\] 
to the function $\varphi= \eta\,u^{q/p}$. 
Hence,  we combine the resulting estimate with the inequality \eqref{moser4} for $\beta=q-p+1$,  $r=\varrho$ and $R=\varrho+1$. This implies the desired conclusion
\[ 
\|u \|_{L^{\infty}(\Omega\setminus B_{\varrho+1}(0))} \leq \big(C_{N,p}\big)^\frac{p}{q}\, \left(4^p\,(q-p+1)^{p-1}\,p^{p-1}\right)^\frac{N}{p\,q}\,\left(1+\lambda\,\|u\|_{L^\infty(\Omega)}^{q-p}\right)^ {\frac{N}{p\,q}} \|u\|_{L^q(\Omega\setminus B_{\varrho}(0))}.
\]
The proof is over.  
\end{proof}
\begin{rem}
\label{rem:fuorisemispazio}
It is not difficult to see that, with exactly the same proof, one can also get 
\begin{equation}
\label{stimafuorisemispazio}
\|u\|_{L^\infty(\Omega\setminus \{x\in\mathbb{R}^N\, :\, \langle x,\mathbf{e}\rangle>t+1\})}\le C_4\,\left(1+\lambda\,\|u\|_{L^\infty(\Omega)}^{q-p}\right)^\frac{N}{p\,q}\,\|u\|_{L^q(\Omega\setminus \{x\in\mathbb{R}^N\, :\, \langle x,\mathbf{e}\rangle>t\})},
\end{equation}
which holds for every $t>0$ and every $\mathbf{e}\in\mathbb{S}^{N-1}$. The only difference is in the choice of the cut-off function: 
once fixed $t>0$, for every $t\le \tau<T$ it is sufficient to take a Lipschitz cut-off function $\eta$ such that
\[
\begin{split}
&0\le \eta\le 1,\quad \eta\equiv 0 \ \text{on}\ \{x\in\mathbb{R}^N\, :\, \langle x,\mathbf{e}\rangle\le \tau\},\quad \eta\equiv 1\ \text{on}\ \{x\in\mathbb{R}^N\, :\, \langle x,\mathbf{e}\rangle\ge T\},\\
&\|\nabla \eta\|_{L^\infty(\mathbb{R}^N)}=\frac{1}{T-\tau}.
\end{split}
\]
We leave the details to the reader.
\end{rem}
We can now proceed similarly as in the proof of \cite[Theorem 5.1]{BBO} to get the claimed exponential decay, at first in $L^q$ norm and then in the sup norm. Here we crucially need the restriction $q>p$.
\begin{teo}
\label{teo:decay}   
Let $1<p<\infty$ and  $p<q<\infty$ satisfying \eqref{pq}. 
Let $\Omega\subseteq\mathbb{R}^N$ be an open set such that $
\lambda_p(\Omega)>0$.
Let $u\in W^{1,p}_0(\Omega)$ be a non-negative weak subsolution of
\[
-\Delta_p u=\lambda\,u^{q-1},\qquad \text{in}\ \Omega,
\]
for some $\lambda>0$. Then, there exist $a=a(p,q,\lambda_p(\Omega))>0$ and a constant $C_5>0$, depending on $N, p,q,\lambda,\lambda_p(\Omega)$ and $u$, such that 
\begin{equation}\label{decayinfty}
|u(x)|\le C_5\, \|u\|_{L^q(\Omega)}\, e^{-a|x|}, \qquad\text{for a.e.}\ x\in\mathbb{R}^N.
	\end{equation}
\end{teo}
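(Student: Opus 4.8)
The starting point is the ``localized at infinity'' estimate of Lemma~\ref{lm:LinftyLp_localized}, which already gives $u\in L^\infty(\Omega)$ and, combined with $u\in L^q(\Omega)$, the decay \eqref{start}: for every $\varepsilon>0$ there is $R_\varepsilon>0$ with $\|u\|_{L^\infty(\Omega\setminus B_{R_\varepsilon})}<\varepsilon$. The plan is to first prove exponential decay in the $L^q$ norm and then upgrade it to the pointwise statement via a second application of the localized $L^\infty$--$L^q$ estimate. Fix a small number $\varepsilon_0>0$, to be chosen later depending only on $p,q$ and $\lambda_p(\Omega)$, and let $R_0:=R_{\varepsilon_0}$. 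On $\Omega\setminus B_{R_0}$ we have $u^{q-1}=u^{q-p}\,u^{p-1}\le\varepsilon_0^{q-p}\,u^{p-1}$, so $u$ is a non-negative weak subsolution of $-\Delta_p u\le \lambda\,\varepsilon_0^{q-p}\,u^{p-1}$ there; this is exactly where the hypothesis $q>p$ is used.

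\emph{$L^q$ decay.} I would set up an energy/Caccioppoli inequality with an exponential weight. For $\varrho\ge R_0$ and a parameter $a>0$ to be fixed, test the subsolution inequality on $\Omega\setminus B_{R_0}$ with $\varphi=\xi^p\,e^{p\,a\,|x|}\,u^{p-1}$ (or, more carefully, with a function of the form $\psi^p u$ where $\psi=\eta\,e^{a|x|}$ and $\eta$ is a cut-off vanishing near $\partial B_{R_0}$ and localizing to a large ball $B_M$ which is then sent to infinity). Expanding $\nabla\psi$, using Young's inequality to absorb the cross term $|\nabla u|^{p-1}|\nabla\psi|\psi^{p-1}u^{p-1}$, and using $|\nabla|x||=1$, one arrives at an inequality of the schematic form
\[
\Big(1-C_p\,a^p - C_p\,\frac{\lambda\,\varepsilon_0^{q-p}}{\lambda_p(\Omega)}\Big)\int_{\Omega\setminus B_{R_0}} e^{p\,a\,|x|}\,|\nabla(\eta\,u)|^p\,dx \le (\text{boundary terms on }B_{R_0+1}),
\]
after invoking the Poincaré inequality $\lambda_p(\Omega)\int|\varphi|^p\le\int|\nabla\varphi|^p$ to control the zeroth-order term $\lambda\varepsilon_0^{q-p}\int e^{pa|x|}\eta^p u^p$ by the gradient term. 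Now choose first $\varepsilon_0$ so small that $C_p\,\lambda\,\varepsilon_0^{q-p}/\lambda_p(\Omega)\le 1/4$ — wait, this makes $\varepsilon_0$ depend on $\lambda$, so instead one should carry $\lambda\,\|u\|_{L^\infty}^{q-p}$ along and only require $a$ small; more precisely choose $a=a(p,q,\lambda_p(\Omega))>0$ and $\varepsilon_0$ depending on $p,q,\lambda_p(\Omega)$ and on $\lambda$ so that the bracket is $\ge 1/2$. Applying Poincaré once more and letting $M\to\infty$ by monotone convergence yields
\[
\int_{\Omega\setminus B_{R_0+1}} e^{p\,a\,|x|}\,u^p\,dx\le C\,e^{p\,a\,(R_0+1)}\int_{B_{R_0+1}} u^p\,dx<+\infty,
\]
and, interpolating back up to exponent $q$ using $u\le\varepsilon_0$ on $\Omega\setminus B_{R_0}$ (so $u^q\le\varepsilon_0^{q-p}u^p$), one gets $\int_{\Omega\setminus B_{R_0+1}} e^{p\,a\,|x|}\,u^q\,dx<+\infty$; in particular, for every $\varrho\ge R_0+1$,
\[
\|u\|_{L^q(\Omega\setminus B_\varrho)}\le C_6\,e^{-a\,\varrho},
\]
with $C_6$ depending on $N,p,q,\lambda,\lambda_p(\Omega)$ and $u$ (through $\|u\|_{L^p(B_{R_0+1})}$ and $R_0$).

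\emph{Upgrade to pointwise decay.} Now fix any $\varrho\ge R_0+2$ and apply the localized estimate \eqref{stimafuoripalla} of Lemma~\ref{lm:LinftyLp_localized} with radius $\varrho-1$: since $u$ is a subsolution of $-\Delta_p u=\lambda u^{q-1}$ on all of $\Omega$ and $u\in L^\infty(\Omega)$,
\[
\|u\|_{L^\infty(\Omega\setminus B_\varrho)}\le C_4\,\big(1+\lambda\,\|u\|_{L^\infty(\Omega)}^{q-p}\big)^{N/(pq)}\,\|u\|_{L^q(\Omega\setminus B_{\varrho-1})}\le C_7\,e^{-a\,(\varrho-1)}=C_7\,e^{a}\,e^{-a\,\varrho}.
\]
For $x$ with $|x|=\varrho\ge R_0+2$ this gives $|u(x)|\le C_7\,e^{a}\,e^{-a|x|}$, and for $|x|\le R_0+2$ one trivially has $|u(x)|\le\|u\|_{L^\infty(\Omega)}\le \|u\|_{L^\infty(\Omega)}\,e^{a(R_0+2)}\,e^{-a|x|}$. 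Taking $C_5$ to be the larger of the two constants (and factoring out $\|u\|_{L^q(\Omega)}$ by homogeneity of the whole argument) yields \eqref{decayinfty} with $a=a(p,q,\lambda_p(\Omega))$ and $C_5=C_5(N,p,q,\lambda,\lambda_p(\Omega),u)$.

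\emph{Main obstacle.} The delicate point is the weighted energy estimate: one must justify using the \emph{unbounded} weight $e^{pa|x|}$ as (part of) a test function in $W^{1,p}_0$. This is handled by first working with a compactly supported cut-off $\eta_M$ supported in $B_M$, deriving the estimate with constants independent of $M$, and only then letting $M\to\infty$ — the finiteness of the limiting integral being a consequence, not an assumption, of the estimate. A second subtlety is the bookkeeping of which constants are ``universal'' (the decay rate $a$ must depend only on $p,q,\lambda_p(\Omega)$, as the statement demands) versus which may depend on $u$ (the prefactor $C_5$, through the radius $R_0$ at which $u$ ``starts to be small'' and through $\|u\|_{L^p(B_{R_0+1})}$); keeping $\lambda\,\|u\|_{L^\infty}^{q-p}$ explicit throughout, rather than absorbing it early, is what makes the dependence of $a$ come out correctly. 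Everything else is a routine Young/Caccioppoli/Poincaré computation plus one invocation of the already-proved Lemma~\ref{lm:LinftyLp_localized}.
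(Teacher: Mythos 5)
Your proposal is correct, and it arrives at the theorem by a genuinely different route from the paper in the central step. Both proofs share the same skeleton: (a) use Lemma~\ref{lm:LinftyLp_localized} together with $u\in L^q(\Omega)$ to get $\|u\|_{L^\infty(\Omega\setminus B_{R_0})}\le\varepsilon_0$, which turns the Lane--Emden right-hand side into a small $p$-homogeneous term outside $B_{R_0}$; (b) prove exponential decay of $\|u\|_{L^q(\Omega\setminus B_R)}$; (c) upgrade to pointwise decay by a second application of \eqref{stimafuoripalla} on unit annuli. The difference is entirely in step (b). The paper tests the subsolution inequality with $\eta^p u^{q-p+1}$, where $\eta$ is a unit-width annular cut-off, applies Poincar\'e, and ``fills the hole'' to obtain the discrete recursion $A(R+1)\le \frac{K}{K+1}\,A(R)$ for $A(R)=\int_{\Omega\setminus B_R}u^q$, with $K=1+2\,C_7/\lambda_p(\Omega)$; iterating this geometric recursion gives the $L^q$ decay with $a=\frac1q\log\frac{K+1}{K}$, visibly depending only on $p,q,\lambda_p(\Omega)$. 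You instead run an Agmon-type weighted Caccioppoli estimate with the weight $e^{a|x|}$, absorb both the weight-gradient term $a^p$ and the potential term $\lambda\varepsilon_0^{q-p}$ into the Poincar\'e constant by choosing $a$ and $\varepsilon_0$ small, and conclude $\int e^{pa|x|}u^p<\infty$ directly. Your method requires the extra care you correctly flag -- truncating the weight to $\min(e^{a|x|},e^{aM})$ to keep the test function admissible, deriving $M$-uniform bounds, then sending $M\to\infty$ by monotone convergence -- whereas the paper's annular iteration sidesteps unbounded weights entirely and is somewhat more elementary. In exchange, your weighted estimate gives the full weighted integrability $\int e^{pa|x|}u^p<\infty$ in one shot, which is a slightly stronger intermediate statement; both yield the same final dependence of $a$ on $p,q,\lambda_p(\Omega)$ and of $C_5$ on the subsolution (through $R_0$ and $\|u\|_{L^p(B_{R_0+1})}$). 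One small remark on your exposition: your worry that ``$\varepsilon_0$ depends on $\lambda$'' is harmless -- the paper's $\varepsilon$ likewise depends on $\lambda$; what matters is only that the decay \emph{rate} $a$ does not, which holds in both approaches since $\lambda$ enters solely through $\varepsilon_0$, hence through $R_0$, hence through $C_5$.
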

\begin{proof} 
We claim that it is sufficient to prove the existence of 
 $a=a(p,q,\lambda_p(\Omega))>0$ and a constant $C_6>0$, depending on $N, p,q,\lambda,\lambda_p(\Omega)$ and $u$,  such that
\begin{equation}
\label{decayp}
\|u\|_{L^q(\Omega\setminus B_R)}\le C_6\, \|u\|_{L^q(\Omega)}\, e^{-aR}, \qquad \text{for every } R>0.
\end{equation}
Indeed, assume that \eqref{decayp} holds. For a.e. $x\in B_1$, by Lemma \ref{lm:Linfty}  we have 
\[
\begin{split}
|u(x)|\le \|u\|_{L^\infty(\Omega)}&\le C_1\,\left(\lambda^\frac{N}{p\,q}\,\|u\|_{L^q(\Omega)}\right)^\frac{p\,q}{p\,q-(q-p)\,N}\\
&\le \left(C_1\,e^a\,\lambda^\frac{N}{p\,q-(q-p)\,N}\,\|u\|_{L^q(\Omega)}^\frac{(q-p)\,N}{p\,q-(q-p)\,N}\right)\,\|u\|_{L^q(\Omega)}\,e^{-a |x|}.
\end{split}
\]
Moreover, for every $n\in \mathbb N$ and for a.e. $x\in B_{n+2}\setminus  B_{n+1}$, from Lemma \ref{lm:LinftyLp_localized} and \eqref{decayp}, we would  get  
\[
\begin{split}
|u(x)|\le \|u\|_{L^\infty(\Omega\setminus B_{n+1})}&\le C_4\,\left(1+\lambda\,\|u\|_{L^\infty(\Omega)}^{q-p}\right)^\frac{N}{p\,q}\,\|u\|_{L^q(\Omega\setminus B_{n})}\\
&\le \left(C_6\,C_4\,\left(1+\lambda\,\|u\|_{L^\infty(\Omega)}^{q-p}\right)^\frac{N}{p\,q}\,\right)\,\|u\|_{L^q(\Omega)}\, e^{-ar}\\
&\le \left(C_6\,C_4\,e^{2a}\,\left(1+\lambda\,\|u\|_{L^\infty(\Omega)}^{q-p}\right)^\frac{N}{p\,q}\,\right)\,\|u\|_{L^q(\Omega)}\, e^{-a|x|}.
\end{split}
\]
Then, \eqref{decayinfty} would follow by combining these estimates, with the constant $C_5$ equal to the maximum of the two constants above.
\par
In order to show \eqref{decayp}, we first observe that for every  $\varepsilon>0$ there exists a radius $r_\varepsilon>0$ such that
\begin{equation}
\label{piccola}
\|u\|_{L^\infty(\Omega\setminus B_R)}\le\varepsilon,\qquad \text{for every}\ R\ge r_\varepsilon.
\end{equation}
This follows from \eqref{stimafuoripalla} and the fact that $u\in L^q(\Omega)$.
For every $R\ge r_\varepsilon$, we take  a radially symmetric cut-off Lipschitz function $\eta$ such that
\[
0\le \eta\le 1,\qquad \eta\equiv 0 \ \text{on}\ B_R,\qquad \eta\equiv 1\ \text{on}\ \mathbb{R}^N\setminus B_{R+1},\qquad \|\nabla \eta\|_{L^\infty(\mathbb{R}^N)}\leq 1.
\]
From \eqref{moser3b} with the choice $\beta=q-p+1$, we thus get
\[
\begin{split}
\int_\Omega\left |\nabla \left(u^\frac{q}{p}\,\eta\right)\right|^p\,dx&\le 4^{p}\,(p\,(q-p+1))^{p-1}\,\int_\Omega |\nabla \eta|^p\,u^q\,dx\\
&+ 2^p\,(q-p+1)^{p-1}\,\lambda\,\|u\|_{L^\infty(\Omega\setminus B_R)}^{q-p}\,\int_\Omega u^q\,\eta^p\,dx.
\end{split}
\] 
%
By using that $u^{q/p}\,\eta$  is a feasible test function for $\lambda_{p}(\Omega)$ and \eqref{piccola}, we get in particular
\[
\begin{split}
\lambda_{p}(\Omega)\, \int_{\Omega} u^q\,\eta^p\, dx&\leq C_7\,\int_\Omega |\nabla \eta|^p\,u^q\,dx+ C_7\,\lambda\,\varepsilon^{q-p}\,\int_\Omega u^q\,\eta^p\,dx,
\end{split}
\]
where $C_7=C_7(p,q)=4^{p}\,(p\,(q-p+1))^{p-1}$.
Thanks to the properties of $\eta$, this entails that
\[
\begin{split}
\lambda_{p}(\Omega)\, \int_{\Omega\setminus B_{R+1}} u^q\, dx &\leq C_7\,\int_{\Omega\cap (B_{R+1}\setminus B_R)}u^q\,dx+ C_7\,\lambda\,\varepsilon^{q-p}\,\int_{\Omega\setminus B_R} u^q\,dx\\
&=C_7\,\left(1+\lambda\,\varepsilon^{q-p}\right)\,\int_{\Omega\cap (B_{R+1}\setminus B_R)}u^q\,dx\\
&+ C_7\,\lambda\,\varepsilon^{q-p}\,\int_{\Omega\setminus B_{R+1}} u^q\,dx.
\end{split}
\]
This can be rearranged into
\[
\left(\lambda_p(\Omega)-C_7\lambda\,\varepsilon^{q-p}\right)\, \int_{\Omega\setminus B_{R+1}} u^q\, dx \le C_7\,\left(1+\lambda\,\varepsilon^{q-p}\right)\,\int_{\Omega\cap (B_{R+1}\setminus B_R)}u^q\,dx.
\]
It is finally time for declaring our choice of $\varepsilon>0$: we take
\[
\varepsilon= \left(\frac{\lambda_{p}(\Omega)}{2\,C_7\,\lambda\,}\right)^\frac{1}{q-p}.
\]  
Then, by setting $r_0:=r_{\varepsilon}$, the previous inequality implies that  
\[
\int_{\Omega \setminus B_{R+1}} u^q\, dx \leq K\,\int_{\Omega \cap(B_{R+1}\setminus B_{R})} u^q\,  dx,\qquad \text{for every}\ R\ge r_0,
\]
where we set
\[
K:=1+\frac{2\,C_7}{\lambda_p(\Omega)}.
\]
If we introduce the notation
\[
A(R)=\int_{\Omega \setminus B_{R}} u^q\, dx,
\]
the previous estimate can be rewritten as
\[
A({R+1})\leq K  \left( A(R) - A(R+1)\right), \qquad   \text{for every } R\geq r_0.
\]
This can be rearranged into the following recursive inequality
\[
A(R+1)\le \frac{K}{K+1}\,A(R),\qquad   \text{for every } R\geq r_0.
\]
By taking into account that the map $r\mapsto A(r)$ is non-increasing, this fact implies that 
\[
A({R+1})\leq \left(\frac{K }{K+1} \right)^{R-r_0} A(r_0)  \quad   \text{for every } R\geq r_0,
\]
with $A(r_0)\leq \|u\|^q_{L^q(\Omega)}$.
By setting 
\[
a= \log\left(\frac{K+1}{K} \right)^\frac{1}{q}\qquad \text{and}\qquad C_5=  \left(\frac{K }{K+1} \right)^{-\frac{{r_0+1}}{q}},
\]
we get \eqref{decayp} for $R\geq r_0+1$.
On the other hand, for $0<R<r_0+1$ it is sufficient to note that
\[
\|u\|_{L^q(\Omega\setminus B_{R})}\leq  \|u\|_{L^q(\Omega)}\leq  e^{a(r_0+1)}\, \|u\|_{L^q(\Omega)}\, e^{-aR}.
\]
Thus, by defining
\[
C_6=\max\left\{e^{a(r_0+1)},\,\left(\frac{K }{K+1} \right)^{-\frac{{r_0}+1}{q}} \right\},
\]
we finally get the claimed estimate \eqref{decayp}.
\end{proof}
\begin{rem}
\label{rem:costanti}
While the exponent $a$ in the \eqref{decayinfty} is independent of the particular subsolution $u$, we remark that the constant $C_5$ {\it does depend} on $u$ itself. Let us make this precise: an inspection of the previous proof reveals that $C_5$ depends on $u$ through
\begin{itemize}
\item its $L^q(\Omega)$ norm; 
\vskip.2cm
\item the radius $r_0$ such that 
\[
\|u\|_{L^\infty(\Omega\setminus B_{r_0})}\le \left(\frac{\lambda_{p}(\Omega)}{2\,C_7\,\lambda\,}\right)^\frac{1}{q-p},
\]
where $C_7>0$ only depends on $p,q$.
\end{itemize}
More precisely, the previous proof shows that for non-negative subsolutions such that
\[
\|u\|_{L^q(\Omega)}\le M\qquad \text{and}\qquad r_0\le \overline{R},
\] 
then we can obtain \eqref{decayinfty} with the constant $C_5$ depending only on $N,p,q,\lambda,\lambda_p(\Omega), M$ and $\overline{R}$.
\end{rem}

\subsection{Extremals}
\begin{coro}
\label{coro:decayextremals}  
Let $1<p<\infty$ and  $p<q$ satisfying \eqref{pq}. 
Let $\Omega\subseteq\mathbb{R}^N$ be an open set such that $
\lambda_p(\Omega)>0$. Assume that there exists a non-negative minimizer $u\in W^{1,p}_0(\Omega)$  of the following problem
\[
	\lambda_{p,q}(\Omega)=\min_{u\in W^{1,p}_0(\Omega)}\left\{\int_\Omega |\nabla u|^p\,dx\, :\, \|u\|_{L^q(\Omega)}=1\right\}.
\]
Then, $u$ satisfies \eqref{decayinfty}.
\end{coro}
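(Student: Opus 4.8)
The plan is to reduce the whole statement to Theorem~\ref{teo:decay}, exploiting the fact that a minimizer is, a fortiori, a subsolution of the Lane--Emden equation. The argument therefore splits according to whether the exponent $q$ is finite.

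Assume first $p<q<\infty$. Being a minimizer of the normalized $p$-Dirichlet energy under the constraint $\|u\|_{L^q(\Omega)}=1$, the function $u$ satisfies the Euler--Lagrange equation recalled in Section~\ref{sec:2}, i.e. it is a non-negative weak solution of $-\Delta_p u=\lambda_{p,q}(\Omega)\,u^{q-1}$ in $\Omega$; in particular it is a non-negative weak \emph{subsolution} of that same equation, with parameter $\lambda=\lambda_{p,q}(\Omega)$. Moreover $\lambda_{p,q}(\Omega)>0$: this follows from the hypothesis $\lambda_p(\Omega)>0$ together with Proposition~\ref{prop:positive} (equivalently, from the mere existence of a non-trivial $u\in W^{1,p}_0(\Omega)$ with positive Dirichlet energy). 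All the hypotheses of Theorem~\ref{teo:decay} are then in force --- $1<p<\infty$, $p<q<\infty$ satisfying \eqref{pq}, $\lambda_p(\Omega)>0$, and $u$ a non-negative subsolution --- so Theorem~\ref{teo:decay} applies verbatim and yields \eqref{decayinfty}. Nothing else is needed in this regime.

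There remains the endpoint $q=\infty$, which forces $p>N$. Here $u$ solves instead the measure-valued Euler--Lagrange equation recalled in Section~\ref{sec:2}: it is $p$-harmonic in $\Omega\setminus\{x_0\}$, where $x_0\in\Omega$ is a point realizing $\|u\|_{L^\infty(\Omega)}$. Fix any $R_0>|x_0|$. On $\Omega\setminus\overline{B_{R_0}}$ the function $u$ is then a non-negative $p$-subharmonic function, hence for \emph{every} $\varepsilon>0$ it is there a non-negative weak subsolution of $-\Delta_p u\le \varepsilon\,u^{p-1}$. This is exactly the situation produced ``at infinity'' in the proof of Theorem~\ref{teo:decay} by the smallness estimate \eqref{piccola}, with the crucial simplification that now the effective eigenvalue is literally $0$, so no radius $r_\varepsilon$ needs to be exhibited: one may simply fix $\varepsilon$ to be a suitable small multiple of $\lambda_p(\Omega)$. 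Running the tail of that proof --- testing the subsolution inequality with $\eta^p u$ for radial cut-offs $\eta$ that vanish on $B_R$ and equal $1$ outside $B_{R+1}$, absorbing the zero-order term via the Poincar\'e inequality and a smallness-of-$\varepsilon$ absorption, and iterating the resulting recursive inequality for $A(R)=\int_{\Omega\setminus B_R}u^p\,dx$ for $R\ge R_0$ --- yields exponential decay of the $L^p$ tails of $u$. A localized-at-infinity $L^\infty$--$L^p$ bound for $p$-subharmonic functions (De Giorgi--Nash--Moser, especially elementary for $p>N$, in the spirit of Lemma~\ref{lm:LinftyLp_localized}) then upgrades this to the pointwise estimate \eqref{decayinfty}, with the exponent $a$ depending only on $p$ and $\lambda_p(\Omega)$.

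The only step requiring some thought is precisely this endpoint: one must recognize that the engine of Theorem~\ref{teo:decay} --- ``$u$ is, near infinity, a subsolution of a $p$-eigenvalue equation with eigenvalue strictly below $\lambda_p(\Omega)$'' --- survives at $q=\infty$ in an even stronger form, because $p$-harmonicity away from the maximum point makes that eigenvalue vanish. For $p<q<\infty$ the statement is a corollary in the literal sense, its entire content being already packaged in Theorem~\ref{teo:decay} and Lemma~\ref{lm:LinftyLp_localized}.
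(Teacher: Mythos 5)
Your proposal is correct and follows essentially the same route as the paper. For $p<q<\infty$ the paper also simply invokes Theorem~\ref{teo:decay} after noting that the minimizer is a non-negative weak solution (hence subsolution) of the Lane--Emden equation. For $q=\infty$ you correctly identify the key point: by Lemma~\ref{lm:maxpoint} the extremal is $p$-harmonic away from its unique maximum point $x_0$, and the paper likewise tests with $u\eta$ (a cut-off vanishing on $B_R$ with $R>|x_0|$), uses the trivial inequality $\lambda_{p,\infty}(\Omega)\,\|u\eta\|_{L^\infty}^p\le\int|\nabla(u\eta)|^p$ for the $L^\infty$--$L^p$ step, and the Poincar\'e inequality for the recursive $L^p$-tail estimate. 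Your framing of this case as a degenerate instance of the Theorem~\ref{teo:decay} machinery (effective eigenvalue $0$, so no radius $r_\varepsilon$ and no absorption of a zero-order term are actually needed) is a slight over-complication of what the paper does more directly, but it is not wrong and leads to the same estimates.
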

\begin{proof}
We  need to distinguish two cases, depending on whether $q < \infty$  or $q = \infty$.
\vskip.2cm\noindent  {\it Case $q<\infty$}. 
 By minimality, we have that $u$ is a non-negative  weak solution of
\[
-\Delta_p u=\lambda_{p,q}(\Omega)\,u^{q-1},\qquad \text{in}\ \Omega.
\]
The conclusion then follows directly from Theorem \ref{teo:decay}.
\vskip.2cm\noindent
\textit{Case $q=\infty$}. This case requires to be discussed only  when $p>N$ and is simpler. Indeed, by applying Lemma \ref{lm:maxpoint},  we have that  
\[
\int_\Omega \langle |\nabla u|^{p-2}\,\nabla u,\nabla \varphi\rangle\,dx=\lambda_{p,\infty}(\Omega)\,|u(x_0)|^{p-2}\,u(x_0)\,\varphi(x_0),\qquad \mbox{ for every } \varphi\in W^{1,p}_0(\Omega),
\]
where $x_0\in\Omega$ is the unique point such that $|u(x_0)|=\|u\|_{L^\infty(\Omega)}=1$. In particular if $ R>r_0:=|x_0|$,  we have that $u$ is a $p-$harmonic function in $\Omega\setminus \overline{B_{R}}$, namely
\[
\int_\Omega \langle |\nabla u|^{p-2}\,\nabla u,\nabla \varphi\rangle\,dx=0,\qquad \text{for every}\ \varphi\in C^{\infty}_0(\Omega\setminus \overline{B_{R}}).
\]
We insert the test function $\varphi=u\,\eta$,
where $\eta \in C^{0,1}(\mathbb{R}^N)$ is the same radially symmetric cut-off function as in the proof of Theorem \ref{teo:decay}, i.e.
\[
0\le \eta\le 1,\qquad \eta\equiv 0 \ \text{on}\ B_R,\qquad \eta\equiv 1\ \text{on}\ \mathbb{R}^N\setminus B_{R+1},\qquad \|\nabla \eta\|_{L^\infty(\mathbb{R}^N)}\leq 1.
\] 
By proceeding as in the proof of \eqref{moser4}, we get
\[
\begin{split}
\int_\Omega \left|\nabla\left(\eta\,u\right)\right|^p\,dx
&\le 4^p\,p^{p-1}\,\int_{\Omega\cap (B_{R+1}\setminus B_R)} u^p\,dx.
\end{split}
\]
By using that $u\,\eta$ is a feasible test function for $\lambda_{p,\infty}(\Omega)$, we get
\begin{equation}
\label{cammafa}
\lambda_{p,\infty}(\Omega)\,\|u\|^p_{L^\infty(\Omega\setminus B_{R+1})}\le 4^p\,p^{p-1}\,\int_{\Omega\cap (B_{R+1}\setminus B_R)} u^p\,dx.
\end{equation}
This gives a $L^\infty-L^p$ decay estimate analogous to that of Lemma \ref{lm:LinftyLp_localized}.
On the other hand, by using that $u\,\eta$ is a feasible test function for $\lambda_{p}(\Omega)$, we get
\[
\lambda_{p}(\Omega)\,\int_{\Omega\setminus B_{R+1}} u^p\,dx\le 4^p\,p^{p-1}\,\int_{\Omega\cap (B_{R+1}\setminus B_R)} u^p\,dx.
\]
We can use this iterative estimate and proceed as in the proof of Theorem \ref{teo:decay}, so to get
\begin{equation}
\label{simmate}
\|u\|_{L^p(\Omega\setminus B_R)}\le C'_7\, \|u\|_{L^p(\Omega)}\, e^{-aR}, \qquad \text{for every}\ R>0,
\end{equation}
for suitable $a,C'_7>0$. Finally, the claimed exponential decay in uniform norm can be obtained by combining \eqref{simmate} and \eqref{cammafa}. We leave the details to the reader.
\end{proof}
\begin{rem}[Decay estimate for $q=\infty$]
\label{rem:puntomax}
From the previous proof,  in the case of a non-negative extremal $u$ for $\lambda_{p,\infty}(\Omega)$, we see that the constant $C_5$ depends on $u$ itself through the radius $r_0=|x_0|$. Here $x_0$ is the unique maximum point of $u$. Thus, if we can prove that 
\[
|x_0|\le \Lambda,
\] 
then we can obtain \eqref{decayinfty} with the constant $C_5$ depending only on $N,p,q,\lambda_{p,\infty}(\Omega), \lambda_p(\Omega)$ and $\Lambda$.
\end{rem}
In the case of Steiner symmetric extremals, the quality of the previous decay estimate can be improved: the estimate {\it does not} depend on the extremal. This is the content of the following
\begin{teo}[Decay of extremals: Steiner case]
\label{teo:decaysteiner}  
Let $1<p<\infty$ and $p<q$ satisfying \eqref{pq}. 
Let $\Omega\in\mathfrak{S}^N$ be an open set such that $\Omega\not=\mathbb{R}^N$. Then every Steiner symmetric extremal of $\lambda_{p,q}(\Omega)$ satisfies \eqref{decayinfty}, with a constant $C_5$ depending only on $N,p,q$ and $\Omega$.
\end{teo}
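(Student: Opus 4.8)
The plan is to deduce the statement from the general decay result of the previous subsection, by showing that for Steiner symmetric extremals all the ``$u$--dependent'' ingredients there are in fact controlled by the geometry of $\Omega$ alone. When $q=\infty$ there is essentially nothing to prove: a Steiner symmetric extremal $u$ is non-increasing along each coordinate direction, so by Theorem \ref{teo:maininfty} its (unique) maximum point is the origin, which lies in $\Omega$ by Lemma \ref{lemmainradSN}; hence Remark \ref{rem:puntomax} applies with $\Lambda=0$. From now on assume $q<\infty$ and set
\[
\varepsilon_0:=\left(\frac{\lambda_p(\Omega)}{2\,C_7\,\lambda_{p,q}(\Omega)}\right)^{\frac{1}{q-p}},
\]
the threshold appearing in Remark \ref{rem:costanti}. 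Since $\lambda_p(\Omega)>0$ by Proposition \ref{prop:positivity}, Lemma \ref{lm:Linfty} (applied with $\lambda=\lambda_{p,q}(\Omega)$ and $\|u\|_{L^q(\Omega)}=1$) and the Poincar\'e inequality bound $\|u\|_{L^\infty(\Omega)}$ and $\|u\|_{W^{1,p}(\Omega)}$ by constants depending only on $N,p,q,\Omega$, \emph{uniformly over all extremals}; consequently the prefactor $C_4(1+\lambda_{p,q}(\Omega)\|u\|_{L^\infty(\Omega)}^{q-p})^{N/(pq)}$ in Lemma \ref{lm:LinftyLp_localized} and in Remark \ref{rem:fuorisemispazio}, as well as the constant in Harnack's inequality of \cite{Tru} (equivalently, in a local Lipschitz estimate) for $-\Delta_p u=\lambda_{p,q}(\Omega)\,u^{q-1}$, are uniform. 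By Remark \ref{rem:costanti} it then suffices to exhibit $\overline R=\overline R(N,p,q,\Omega)$ with $\|u\|_{L^\infty(\Omega\setminus B_{\overline R})}\le\varepsilon_0$ for every Steiner symmetric extremal $u$; and since $\Omega\setminus B_{\overline R}\subseteq\bigcup_i\{|x_i|>\overline R/\sqrt N\}$, it is enough to find, for each $i$, a radius $\overline R_i=\overline R_i(N,p,q,\Omega)$ with $\|u\|_{L^\infty(\Omega\cap\{|x_i|>\overline R_i\})}\le\varepsilon_0$. I split the directions using Lemma \ref{lm:luca}: the map $t\mapsto r_{\Omega_{i,t}}$ is non-increasing on $[0,+\infty)$ and eventually finite, so $\mu_i:=\lim_{t\to+\infty}r_{\Omega_{i,t}}\in[0,+\infty)$ exists; put $I_0=\{i:\mu_i=0\}$ (sections in direction $i$ ``shrink'') and $I_+=\{i:\mu_i>0\}$ (they ``persist'').

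For $i\in I_0$ the argument is soft. Given $\eta>0$, pick $t_\eta$ with $r_{\Omega_{i,t}}\le\eta$ for $|t|\ge t_\eta$; then every ball contained in $\Omega\cap\{x_i>t_\eta\}$ has radius $\le\eta$, so this set has inradius $\le\eta$ and satisfies a uniform measure density condition (inherited from Lemma \ref{lemmainradSN} along $\partial\Omega$, and trivial on the flat face $\{x_i=t_\eta\}$). Hence, by \cite[Corollary 5.6]{BozBra2} for $1<p\le N$ and \cite[Theorem 15.4.1]{Maz} for $p>N$, one gets $\lambda_{p,q}(\Omega\cap\{x_i>t_\eta\})\ge c(N,p,q)\,\eta^{-(p-N+Np/q)}$, with positive exponent by \eqref{pq}. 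Testing this Poincar\'e--Sobolev constant against $u\chi$, where $\chi$ is a Lipschitz cut-off equal to $1$ on $\{x_i>t_\eta+1\}$ and $0$ on $\{x_i<t_\eta\}$, and using $\|\nabla u\|_{L^p(\Omega)}^p=\lambda_{p,q}(\Omega)$ together with the uniform bound on $\|u\|_{L^p(\Omega)}$, yields $\|u\|_{L^q(\Omega\cap\{x_i>t_\eta+1\})}\le C(\Omega)\,\eta^{\gamma}$ for some $\gamma>0$, uniformly in $u$. Inserting this into the half-space estimate of Remark \ref{rem:fuorisemispazio} (direction $\mathbf e_i$) and choosing $\eta$ small in terms of $N,p,q,\Omega$ gives $\|u\|_{L^\infty(\Omega\cap\{x_i>t_\eta+2\})}\le\varepsilon_0$; by the $x_i\mapsto-x_i$ symmetry of $\Omega$ and $u$, the choice $\overline R_i:=t_\eta+2$ works.

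For $i\in I_+$ — the heart of the matter, and the analogue for a general Steiner set of the strip estimate of \cite{BBT} — I proceed by a ``filling'' argument. Since $r_{\Omega_{i,t}}\ge\mu_i>0$ for all $t$, and each section $\Omega_{i,t}$, being Steiner symmetric in $\mathbb R^{N-1}$, contains the $(N-1)$-ball of radius $r_{\Omega_{i,t}}$ centred at $t\,\mathbf e_i$ (Lemma \ref{lemmainradSN} in dimension $N-1$), the tube $T_i$ of radius $\mu_i$ around the $\mathbf e_i$-axis satisfies $T_i\subseteq\Omega$. Suppose $u(s\,\mathbf e_i)\ge\varepsilon_0$ for some $s>0$. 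By Steiner monotonicity $u(t\,\mathbf e_i)\ge u(s\,\mathbf e_i)\ge\varepsilon_0$ for every $0\le t\le s$, and since $B_{\mu_i}(t\,\mathbf e_i)\subseteq T_i\subseteq\Omega$, the uniform Harnack inequality yields $u\ge\varepsilon_0/H$ on $B_{\mu_i/2}(t\,\mathbf e_i)$ for each such $t$; hence $u\ge\varepsilon_0/H$ on a cylinder of radius $\sim\mu_i$ and length $\sim s$ around the segment $[0,s]\,\mathbf e_i$. Therefore $1=\|u\|_{L^q(\Omega)}^q\ge c_N\,(\varepsilon_0/H)^q\,\mu_i^{N-1}\,s$, so $s\le\overline R_i:=c_N^{-1}H^q\,\varepsilon_0^{-q}\,\mu_i^{-(N-1)}$. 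Since $u$ restricted to a hyperplane $\{x_i=c\}$ is Steiner symmetric in the remaining variables, its maximum on that hyperplane is attained along the $\mathbf e_i$-axis; combined with the previous bound and the $x_i\mapsto-x_i$ symmetry, this gives $\|u\|_{L^\infty(\Omega\cap\{|x_i|>\overline R_i\})}\le\varepsilon_0$.

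Combining the two cases gives $\|u\|_{L^\infty(\Omega\setminus B_{\overline R})}\le\varepsilon_0$ for all Steiner symmetric extremals $u$, with $\overline R=\sqrt N\max_i\overline R_i$ depending only on $N,p,q,\Omega$; Remark \ref{rem:costanti} (with $M=1$ and $\overline R$ as above) then delivers \eqref{decayinfty} with $C_5=C_5(N,p,q,\Omega)$, the exponent $a=a(p,q,\lambda_p(\Omega))$ being already geometric. I expect the case $I_+$ to be the main obstacle: one must make the Harnack (or local Lipschitz) constant uniform over the whole family of extremals — which is exactly what forces the a priori $L^\infty$ bound for extremals and the explicit local regularity estimate referred to in the introduction — and, in the gluing step, one must exploit Steiner monotonicity to transfer a large far-away value of $u$ onto a coordinate axis, so that the soft estimate on $I_0$ and the tube estimate on $I_+$ jointly control all of $\Omega\setminus B_{\overline R}$.
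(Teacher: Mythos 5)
Your proof is correct and follows the same overall strategy as the paper: reduce the problem, via the constant--tracking in Remark \ref{rem:costanti}, to producing a geometric radius $\overline R$ with $\|u\|_{L^\infty(\Omega\setminus B_{\overline R})}\le \mathcal{E}_\Omega$ for every Steiner symmetric extremal, then exploit Steiner monotonicity to reduce to decay along the coordinate axes, and finally analyze each direction according to the asymptotics of $t\mapsto r_{\Omega_{i,t}}$. The uniform $L^\infty$ bound from Lemma \ref{lm:Linfty} is the crucial input making all lower--order constants geometric, just as in the paper. (You merge the paper's ``bounded in direction $\mathbf e_i$'' and ``shrinks in direction $\mathbf e_i$'' cases into the single set $I_0$; that is harmless.)

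Where you genuinely deviate is in the two technical implementations, and both are legitimate alternatives. For the shrinking directions, the paper applies the $(N-1)$--dimensional Poincar\'e inequality on each section $\Omega_{i,t}$ together with Fubini to obtain $\int_{\{x_i>t\}}|u|^p \le C\, r_{\Omega_{i,t}}^p\,\lambda_{p,q}(\Omega)$ directly; you instead invoke the capacitary lower bound for $\lambda_{p,q}$ of the truncated domain $\Omega\cap\{x_i>t_\eta\}$ via the measure density condition inherited from Lemma \ref{lemmainradSN}, and then test with a cut-off. Both routes use the half--space $L^\infty$--$L^q$ estimate of Remark \ref{rem:fuorisemispazio} to conclude, and both are correct. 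For the tubular directions --- which the paper identifies as the delicate case --- the paper invokes the Duzaar--Mingione local Lipschitz estimate \cite[Theorem 1.2]{DM} to produce a uniform Lipschitz constant on the tube, and then deduces a lower bound for $u$ on a ball of a $u$--dependent radius $\varphi(t)$; you instead apply Harnack's inequality (Trudinger) on balls of the fixed radius $\sim\mu_i$ along the axis, obtaining $u\ge \varepsilon_0/H$ on a cylinder whose length is then bounded by the normalization $\|u\|_{L^q}=1$. Your Harnack-based ``filling'' argument is more elementary and suffices for the statement (one only needs the existence of a uniform $\overline R$, not an explicit decay profile); the paper's Lipschitz route gives a sharper, explicit monotone function $\Phi^{-1}$ controlling the decay but is not needed to reach the same conclusion. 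The only small caveat is that Trudinger's Harnack is stated for cubes, so in your argument the balls $B_{\mu_i/2}(t\,\mathbf e_i)$ should be replaced by cubes $Q_{c\mu_i}(t\,\mathbf e_i)$ for a dimensional $c$, and the Harnack constant $H$ must be seen to depend only on $N,p$, on the tube radius $\mu_i$, and on the uniform bound $\lambda_{p,q}(\Omega)\,\|u\|_{L^\infty(\Omega)}^{q-p}$ supplied by Lemma \ref{lm:Linfty}; this is exactly the uniform Harnack already used in Proposition \ref{prop:fine}, so the point is fine, but it deserves to be flagged explicitly.
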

\begin{proof}
Thanks to Theorem \ref{teo:main}, we get that there exists at least a positive Steiner symmetric extremal. Let us take $u$ a positive Steiner symmetric extremal. We preliminary notice that we can assume that $u$ is continuous on $\overline{\Omega}$: for $p>N$, this simply follows from the Sobolev--type embedding
\[
W^{1,p}_0(\Omega)\hookrightarrow C^{0,\alpha_p}(\overline\Omega),
\]
previously recalled; for $1<p\le N$, this follows from classical regularity results for solutions of elliptic PDEs, thanks to the measure density condition of Lemma \ref{lemmainradSN} (see for example \cite[Theorem 4.2]{Tru}, \cite[Theorem 2.7]{GaZi} and \cite[Theorem 4.9]{MaZi}).
\par
We already know by Corollary \ref{coro:decayextremals} that $u$ decays exponentially fast at infinity. Since $u$ is Steiner symmetric, we know that it has a maximum point at the origin: then, according to Remark \ref{rem:puntomax}, in the case $q=\infty$ there is nothing to prove. 
\par
Let us consider the case $q<\infty$. Thanks to the fact that $u$ has unit $L^q(\Omega)$ norm, in light of Remark \ref{rem:costanti} we only need to show that it is possible to choose $\overline{R}$ only depending $N,p,q$ and $\Omega$, but {\it not} on $u$ itself, such that
\[
\|u\|_{L^\infty(\Omega\setminus B_r)}\le \left(\frac{\lambda_{p}(\Omega)}{2\,C_7\,\lambda_{p,q}(\Omega)}\right)^\frac{1}{q-p}=:\mathcal{E}_\Omega,\qquad \text{for every}\ r\ge \overline{R}.
\] 
More precisely, by taking into account the symmetries of $u$, this is the same as
\[
\max_{i\in\{1,\dots,N\}} \left(\sup_{t\ge \overline{R}}u(t\,\mathbf{e}_i)\right)\le \mathcal{E}_\Omega.
\]
In order to show existence of such a radius $\overline{R}$, we need to analyze the ``behaviour at infinity'' of our set $\Omega$. To this aim, we recall the notation
\[
 \Omega_{i,t}=\{x\in\Omega\,:\,\langle x,\mathbf{e}_i\rangle =t\},\qquad\text{for}\ t\in\mathbb{R},
\]
and the definition \eqref{inradius_sect} of $r_{\Omega_{i,t}}$, for its (relative) inradius. Then, by recalling Lemma \ref{lm:luca}, for every fixed $i\in\{1,\dots,N\}$ we may have three different possibilities (see Figure \ref{fig:pinocchione}):
\begin{itemize}
\item there exists $t_i>0$ such that $r_{\Omega_{i,t}}=0$ for every $|t|\ge t_i$. In this case, our set $\Omega$ is bounded in direction $\mathbf{e}_i$;
\vskip.2cm
\item we have $r_{\Omega_{i,t}}>0$ for every $t\in\mathbb{R}$ and
\[
\lim_{|t|\to +\infty} r_{\Omega_{i,t}}=0.
\]
In this case, we say that $\Omega$ {\it shrinks at infinity in direction $\mathbf{e}_i$};
\vskip.2cm
\item finally, it may happen that
\[
\lim_{|t|\to +\infty} r_{\Omega_{i,t}}>0.
\]
In this case, we say that $\Omega$ {\it is tubular at infinity in direction $\mathbf{e}_i$}.
\end{itemize}
\begin{figure}
\includegraphics[scale=.35]{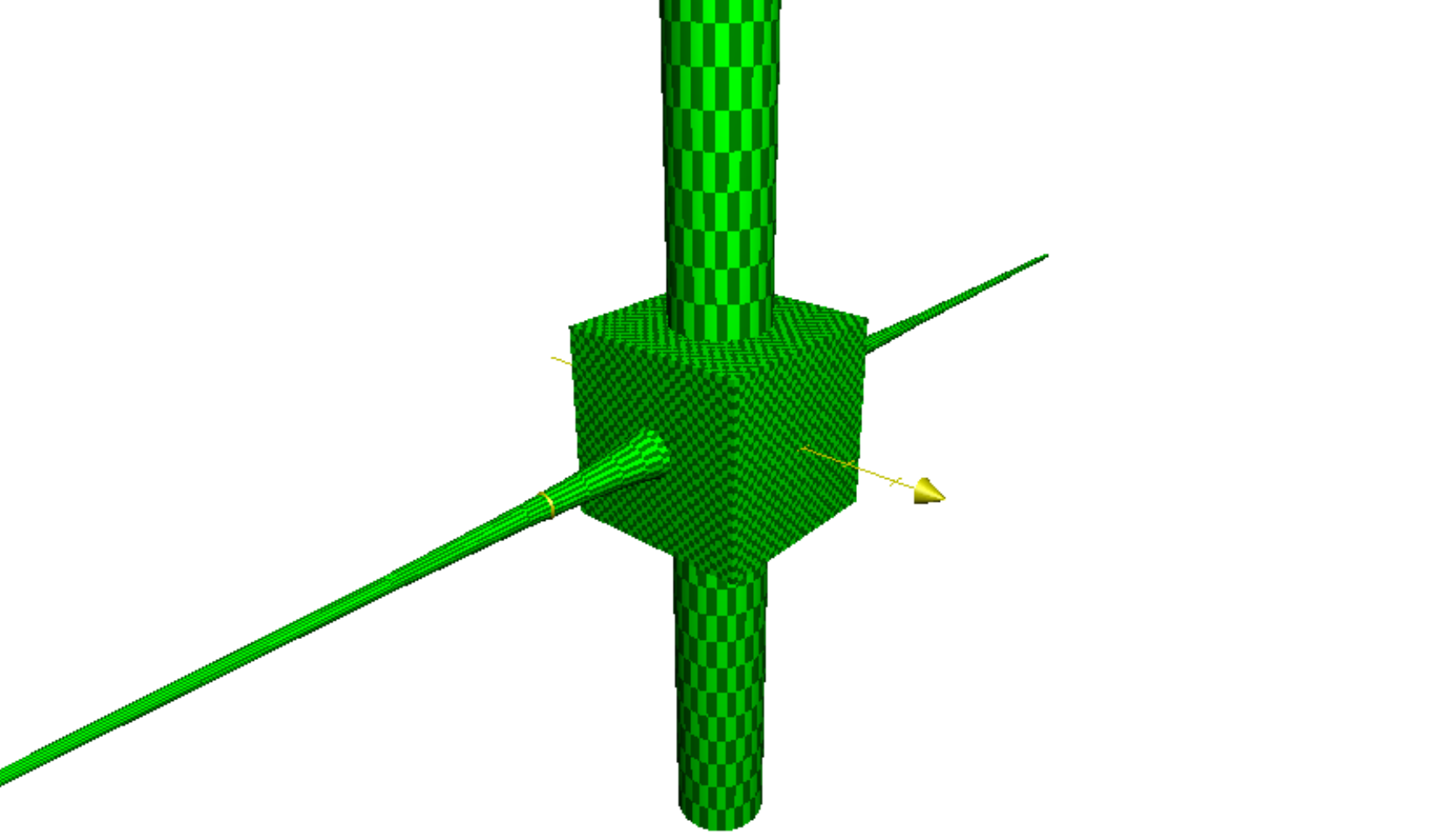}
\caption{An unbounded Steiner symmetric set in $\mathbb{R}^3$, exhibiting three different behaviours ``at infinity'', according to the coordinate directions. In particular, we have that $\Omega$ shrinks at infinity in direction $\mathbf{e}_1$; it is bounded in direction $\mathbf{e}_2$ and it is tubular at infinity in direction $\mathbf{e}_3$.}
\label{fig:pinocchione}
\end{figure}
\par
Accordingly, we now divide the rest of the proof in three cases.
\vskip.2cm\noindent
{\it Case 1: $\Omega$ is bounded in direction $\mathbf{e}_i$.} This is the simplest case. In this case, we obviously have $u(t\,\mathbf{e}_i)=0$ for every $t\ge t_i$ and thus
\[
\sup_{t\ge t_i}u(t\,\mathbf{e}_i)=0\le \mathcal{E}_\Omega;
\]
{\it Case 2: $\Omega$ shrinks at infinity in direction $\mathbf{e}_i$.}
Observe that, with a slight abuse, we can view the set $\Omega_{i,t}$ as a Steiner symmetric open set in $\mathbb{R}^{N-1}$, that is $\Omega_{i,t}\in\mathfrak{S}^{N-1}$. Moreover, thanks to Lemma \ref{lm:luca}, we know that for every $t\ge \tau_\Omega>0$, such a section has finite inradius.
In particular, in view of Lemma \ref{lemmainradSN}, Proposition \ref{prop:positivity} with $p=q$ and the definition of $r_{\Omega_{i,t}}$, for every $t\ge \tau_\Omega$ we have
\begin{equation}\label{eq:poincaresezioni}
\frac{1}{C}\,\left(\frac{1}{r_{\Omega_{i,t}}}\right)^p\int_{\Omega_{i,t}}|\psi|^p\,d\widehat{x}_i\le  \int_{\Omega_{i,t}}|\nabla \psi|^p\,d\widehat{x}_i,\qquad \text{for every}\ \psi\in C^\infty_0(\Omega_{i,t}),
\end{equation}   
for some $C=C(N,p)>0$. We used the notation $d\widehat x_i=dx_1\dots dx_{i-1}\,dx_{i+1}\dots d x_N$. 
Therefore, for every $\varphi\in C^{\infty}_0(\Omega)$, by using  Fubini-Tonelli's theorem and \eqref{eq:poincaresezioni}, we deduce the following Poincar\'e inequality
\[
\begin{split}
\int_{\{x\in \Omega\, :\, \langle x,\mathbf{e}_i\rangle>t\}}|\varphi|^p\,dx&=\int_t^\infty\left(\int_{\Omega_{i,x_i}}|\varphi|^p\,d\widehat x_i\right)\,dx_i\\
& \le C \int_t^{\infty} (r_{\Omega, i}(x_i))^p \left(\int_{\Omega_{i,x_i}} \left(\sum_{j\neq i} \left|\frac{\partial\varphi}{\partial x_j}\right|^2\right)^{p/2}\, d\widehat x_i\right) dx_i\\
&\le C (r_{\Omega, i}(t))^p\int_{\Omega}|\nabla \varphi|^p\, dx.
\end{split}
\]
Observe that in the last inequality we used \eqref{eq:rit}.
By density, this estimate extends to $W^{1,p}_0(\Omega)$, as well.
In particular, when applied to our extremal $u$, it gives the following decay for the $L^p$ norm in direction $\mathbf{e}_i$
\[
\int_{\{x\in \Omega\, :\, \langle x,\mathbf{e}_i\rangle>t\}}|u|^p\,dx\le C\,\lambda_{p,q}(\Omega)\, \Big(r_{\Omega, i}(t)\Big)^p.
\]
In terms of the $L^q$ norm, we easily deduce
\[
\|u\|_{L^{q}(\{x\in \Omega\, :\,\langle x,\mathbf{e}_i\rangle >t\})}\le \|u\|^{\frac{q-p}{q}}_{L^{\infty}(\Omega)} \left(\int_{\{x\in \Omega\, :\, \langle x,\mathbf{e}_i\rangle>t\}}|u|^p\,dx\right)^{\frac{1}{q}}\le \Big(C\,\lambda_{p,q}(\Omega)\Big)^{\frac{1}{q}}\, (r_{\Omega, i}(t))^{\frac{p}{q}}\, \|u\|^{\frac{q-p}{q}}_{L^{\infty}(\Omega)}.
\]
In turn, by Lemma \ref{lm:Linfty} and using the fact that $\|u\|_{L^q(\Omega)}=1$, we obtain
\[
\|u\|_{L^{q}(\{x\in \Omega\, : x_i>t\})}\le  C_1^{\frac{q-p}{q}}\left(C \lambda_{p,q}(\Omega)^{\frac{N(q-p)}{p\,q-(q-p)\,N}+1}\right)^{\frac{1}{q}}\,\Big(r_{\Omega, i}(t)\Big)^{\frac{p}{q}}.
\]
In order to reach the conclusion, we may evoke the $L^\infty-L^q$ estimate \eqref{stimafuorisemispazio} of Remark \ref{rem:fuorisemispazio} (again in conjunction with Lemma \ref{lm:Linfty}), to get
\[
\begin{split}
u((t+1)\,\mathbf{e}_i)=\|u\|_{L^{\infty}(\{x\in\Omega\, : x_i>t+1\})}\le C_8\, (r_{\Omega, i}(t))^{\frac{p}{q}},
\end{split}
\]
for some $C_8=C_8(N,p,q,\lambda_{p,q}(\Omega))>0$.
Our standing assumptions on $r_{\Omega, i}(t)$ provides the desired conclusion.
\vskip.2cm\noindent
{\it Case 3: $\Omega$ is tubular at infinity in direction $\mathbf{e}_i$.} This is the most delicate case. We will adapt and generalize a trick taken from \cite[Lemma 4.9]{BBT}, the latter being concerned with the case $p=N=2$ and $\Omega\subseteq\mathbb{R}^2$ an infinite strip.
Let us set
\[
R_i=\lim_{|t|\to +\infty} r_{\Omega_{i,t}}.
\]
Thanks to properties (S1) and (S2) in Definition \ref{S}, we have that $\Omega$ contains the infinite tube $\mathcal{T}_i(R_i)$, where we set
\[
\mathcal{T}_i(r)=\Big\{x=(x_1,\dots,x_N)\in\mathbb{R}^N\, :\, |x-x_i\,\mathbf{e}_i|<r\Big\},\qquad \text{for}\ r>0.
\]
From standard Elliptic Regularity, we know that $u$ is locally Lipschitz continuous on $\Omega$. More precisely, by \cite[Theorem 1.2]{DM} we have the following local estimate
\begin{equation}
\label{stimaDM}
\|\nabla u\|_{L^\infty(B_{r/2}(x_0))}\le C\,\left(\fint_{B_r(x_0)} |\nabla u|^p\,dx\right)^\frac{1}{p}+C\,\|\mathbf{P}(\cdot,r)\|^\frac{1}{p-1}_{L^\infty(B_r(x_0))},
\end{equation}
for every $B_r(x_0)\Subset\Omega$, where $C=C(N,p)>0$ and $\mathbf{P}(x,r)$ is a non-linear potential depending on the right-hand side of the Lane-Emden equation. In our situation, it reads as
\[
\mathbf{P}(x,r)=\int_0^r \left(\frac{\left\|\lambda_{p,q}(\Omega)\,u^{q-1}\right\|^2_{L^2(B_\varrho(x))}}{\varrho^{N-2}}\right)^\frac{1}{2}\,\frac{d\varrho}{\varrho}.
\]
By using Lemma \ref{lm:Linfty}, the latter can be easily estimated as follows
\[
\begin{split}
\mathbf{P}(x,r)&\le \lambda_{p,q}(\Omega)\, \|u\|^{q-1}_{L^\infty(\Omega)}\,\int_0^r \left(\frac{\omega_N\, \varrho^N}{\varrho^{N-2}}\right)^\frac{1}{2}\,\frac{d\varrho}{\varrho}\\
&=\lambda_{p,q}(\Omega)\, \|u\|^{q-1}_{L^\infty(\Omega)}\,\sqrt{\omega_N}\, r\le C_1^{q-1}\,\sqrt{\omega_N}\,\left(\lambda_{p,q}(\Omega)\right)^{1+\frac{N\,(q-1)}{p\,q-(q-p)\,N}}\,r.
\end{split}
\]
On the other hand, by the minimality of $u$ we have
\[
\fint_{B_r(x_0)} |\nabla u|^p\,dx\le \frac{1}{|B_r(x_0)|}\,\int_\Omega |\nabla u|^p\,dx=\frac{\lambda_{p,q}(\Omega)}{|B_r(x_0)|}.
\]
Thus, from \eqref{stimaDM} we get in particular
\[
\|\nabla u\|_{L^\infty(B_{r/2}(x_0))}\le \frac{C_8}{r^\frac{N}{p}}+C_8\,r^\frac{1}{p-1},
\]
for every $B_r(x_0)\Subset\Omega$, with $C_8>0$ depending only on $N,p,q$ and $\lambda_{p,q}(\Omega)$. We use this estimate with $r=R_i/2$ and $x_0=t\,\mathbf{e}_i$, for an arbitrary $t\in\mathbb{R}$. With the notation above, we obtain
\[
\|\nabla u\|_{L^\infty(\mathcal{T}_i(R_i/4))}=\sup_{t\in\mathbb{R}} \|\nabla u\|_{L^\infty(B_{R_i/4}(t\,\mathbf{e}_i))}\le C_8\,\left(\left(\frac{2}{R_i}\right)^\frac{N}{p}+\left(\frac{R_i}{2}\right)^\frac{1}{p-1}\right)=:C_9.
\]
This gives a uniform Lipschitz estimate on $u$ in the tube $\mathcal{T}_i(R_i/4)$. In particular, by using this fact we have
\begin{equation}
\label{bassolip}
u(x)\ge u(x_i\,\mathbf{e}_i)-C_9\,|x-x_i\,\mathbf{e}_i|,\qquad \text{for}\ x\in \mathcal{T}_i(R_i/4).
\end{equation}
We define the  positive quantity 
\[
\varphi(t)=\min\left\{\frac{R_i}{4},\, \frac{u(t\,\mathbf{e}_i)}{2\,C_9}\right\},
\]
then, from \eqref{bassolip},  we get
\[
u(x)\ge \frac{u(x_i\,\mathbf{e}_i)}{2},\qquad \text{for every}\ x\ \text{such that}\ |x-x_i\,\mathbf{e}_i|\le \varphi(x_i).
\]
By using this fact, the normalization on the $L^q(\Omega)$ norm and Fubini-Tonelli's Theorem, we get for every $t>0$
\[
\begin{split}
1=\int_{\mathbb{R}^N} u^q\,dx&\ge \int_0^t\left(\int_{\{\widehat x_i\in\mathbb{R}^{N-1}:\, |\widehat{x}_i|\le \varphi(x_i)\}} u^q\,d\widehat{x}_i\right)\,dx_i\\
&\ge \frac{\omega_{N-1}}{2^q}\,\int_0^t \varphi(x_i)^{N-1}\,u(x_i\,\mathbf{e}_i)^q\,dx_i\ge \frac{\omega_{N-1}}{2^q}\,t\,\varphi(t)^{N-1}\,u(t\,\mathbf{e}_i)^q.
\end{split}
\]
In the last inequality we also used that both $t\mapsto \varphi(t)$ and $t\mapsto u(t\,\mathbf{e}_i)$ are non-increasing. Thus, if we define the increasingly diverging function 
\[
\Phi(\tau)=\left(\min\left\{\frac{R_i}{4},\, \frac{\tau}{2\,C_9}\right\}\right)^{N-1}\,\tau^q,\qquad \text{for}\ \tau\ge 0,
\]
we have obtained the following decay estimate
\[
\Phi(u(t\,\mathbf{e}_i))\le \frac{2^q}{\omega_{N-1}}\,\frac{1}{t}\qquad \text{that is}\qquad u(t\,\mathbf{e}_i)\le \Phi^{-1}\left(\frac{2^q}{\omega_{N-1}}\,\frac{1}{t}\right).
\]
By observing that 
\[
\lim_{L\to 0^+} \Phi^{-1}(L)=0,
\]
and that the function $\Phi$ only depends on $R_i, N, p,q$ and $\lambda_{p,q}(\Omega)$, we get the desired conclusion in this case, as well.
\end{proof}
\begin{rem}
\label{rem:nolip}
With reference to the last part of the previous proof, we observe that in general we can not expect our extremals to be {\it globally} Lipschitz continuous. This is already false in the case $p=N=2$: for example, by considering the following Steiner symmetric set (see Figure \ref{fig:scalini})
\[
E=\{(x,y)\in\mathbb{R}^2\, :\, |y|< \psi(x)\},\qquad \text{with}\ \psi(x)=\sum_{i=0}^\infty \frac{1}{i+1}\,1_{[i,i+1)}(x)+\sum_{-\infty}^{j=0} \frac{1}{|j|+1}\,1_{(j-1,j]}(x),
\]
we notice that it has countably many boundary points at which we can center a cone with opening $3/2\,\pi$, entirely contained in $E$. These are the points of the form 
\[
\left\{\left(i,\pm \frac{1}{|i|+1}\right)\right\}_{i\in\mathbb{Z}\setminus\{0\}}.
\]
By using a standard barrier argument\footnote{The lower barrier can be constructed by taking a positive homogeneous harmonic function, as in \cite[Lemma 6.7.1]{BraBook} for example.}, we can easily see that the gradient of a positive solution $u\in W^{1,2}_0(E)$ to
\[
-\Delta u=\lambda\,u^{q-1},\qquad \mbox{in}\ E,
\]
must blow-up when approaching these points.
\begin{figure}
\includegraphics[scale=.3]{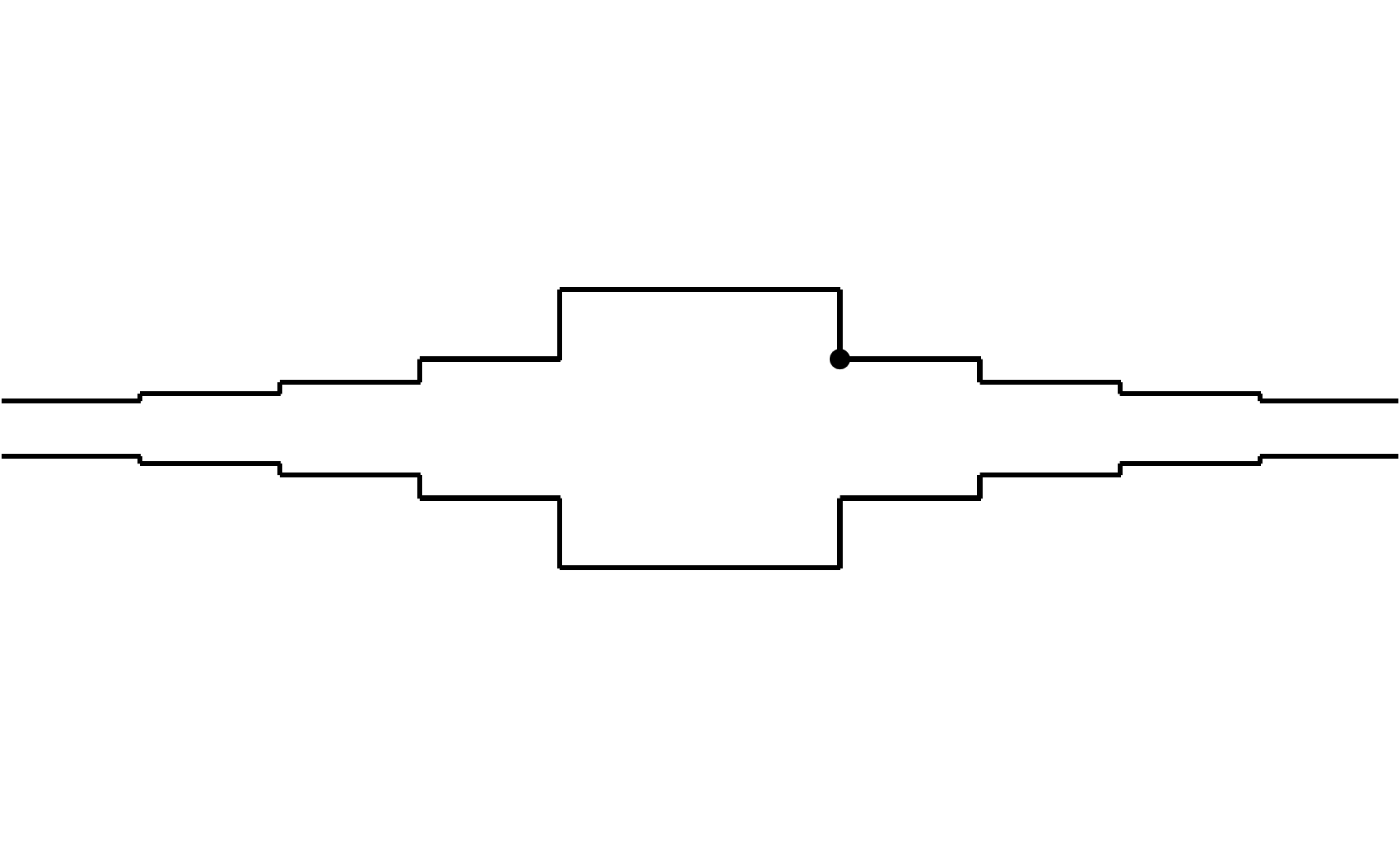}
\caption{The set $E$ in Remark \ref{rem:nolip}. Near the boundary point marked by a black dot, the gradient of the solution blows-up.}
\label{fig:scalini}
\end{figure}
\end{rem}

\appendix

\section{The endpoint case $q=\infty$}
\label{sec: app}

We start with the following simple result, about decay at infinity of Sobolev functions.
\begin{lm}
\label{lm:decay}
Let $p>N$ and let $\varphi\in W^{1,p}(\mathbb{R}^N)$. Then we have
\[
\lim_{R\to +\infty} \|\varphi\|_{L^\infty(\mathbb{R}^N\setminus B_R)}=0.
\]
In particular, there exists $x_0\in\mathbb{R}^N$ such that
\[
|\varphi(x_0)|=\|\varphi\|_{L^\infty(\mathbb{R}^N)}.
\]
\end{lm}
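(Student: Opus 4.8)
The plan is to exploit the Morrey-type embedding $W^{1,p}(\mathbb{R}^N)\hookrightarrow C^{0,\alpha}(\mathbb{R}^N)$ with $\alpha=1-N/p$, together with the fact that the $W^{1,p}$ norm is countably additive over disjoint pieces of $\mathbb{R}^N$, so the "mass" of $\varphi$ must escape to infinity. First I would fix $R>0$ and, for any $x$ with $|x|\ge R+1$, apply the local Morrey inequality on the ball $B_1(x)$: since $p>N$, one has the estimate
\[
\|\varphi\|_{L^\infty(B_1(x))}\le \mathcal{M}\,\Big(\|\nabla\varphi\|_{L^p(B_1(x))}+\|\varphi\|_{L^p(B_1(x))}\Big),
\]
for a dimensional constant $\mathcal{M}=\mathcal{M}(N,p)$ (this is the standard Morrey estimate on a unit ball, e.g. \cite[Theorem 12.83]{Leo}). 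Here I would either cite a scaled/translated version of Morrey's inequality directly, or reduce to the fixed reference ball $B_1$ via translation invariance of Lebesgue measure.

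Next I would take the supremum over all such $x$. Since every $B_1(x)$ with $|x|\ge R+1$ is contained in $\mathbb{R}^N\setminus B_R$, we get
\[
\|\varphi\|_{L^\infty(\mathbb{R}^N\setminus B_{R+1})}\le \mathcal{M}\,\Big(\|\nabla\varphi\|_{L^p(\mathbb{R}^N\setminus B_R)}+\|\varphi\|_{L^p(\mathbb{R}^N\setminus B_R)}\Big).
\]
Now I would invoke absolute continuity of the integral: since $|\nabla\varphi|^p$ and $|\varphi|^p$ are in $L^1(\mathbb{R}^N)$, the right-hand side tends to $0$ as $R\to+\infty$. This gives the first claim, $\lim_{R\to+\infty}\|\varphi\|_{L^\infty(\mathbb{R}^N\setminus B_R)}=0$.

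For the second assertion, I would argue by compactness. Because $\varphi$ is continuous on $\mathbb{R}^N$ (again by the Morrey embedding, after modification on a null set), and because $\|\varphi\|_{L^\infty(\mathbb{R}^N\setminus B_R)}\to 0$, either $\varphi\equiv 0$ — in which case any $x_0$ works — or $M:=\|\varphi\|_{L^\infty(\mathbb{R}^N)}>0$ and there is $R$ with $\|\varphi\|_{L^\infty(\mathbb{R}^N\setminus B_R)}<M$; then $\sup_{\overline{B_R}}|\varphi|=M$, and this supremum is attained at some $x_0\in\overline{B_R}$ by continuity of $|\varphi|$ on the compact set $\overline{B_R}$. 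The only mild subtlety — really the only "obstacle" — is bookkeeping the covering/translation argument cleanly so that the estimate on $\|\varphi\|_{L^\infty(\mathbb{R}^N\setminus B_{R+1})}$ genuinely follows from local Morrey estimates on unit balls; everything else is routine.
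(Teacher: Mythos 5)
Your argument is correct, and it follows the same general philosophy as the paper's proof (Morrey embedding plus absolute continuity of the integral forcing the tails of $\|\varphi\|_{L^p}$ and $\|\nabla\varphi\|_{L^p}$ to vanish), but the route is slightly different and, arguably, cleaner. The paper starts from the \emph{homogeneous} Morrey estimate $|\varphi(x)-\varphi(y)|\le C\,\|\nabla\varphi\|_{L^p(\mathbb{R}^N)}\,|x-y|^{1-N/p}$, averages it over $y\in B_r(x)$, and arrives at
\[
|\varphi(x)|\le \frac{C\,r^{1-N/p}}{\omega_N}\,\|\nabla\varphi\|_{L^p(\mathbb{R}^N)}\,\int_{B_1}|z|^{1-N/p}\,dz+\left(\fint_{B_r(x)}|\varphi|^p\,dy\right)^{1/p}.
\]
Here only the \emph{second} term involves a tail of $\varphi$; the first term carries the full $\|\nabla\varphi\|_{L^p(\mathbb{R}^N)}$ and is controlled by its prefactor $r^{1-N/p}$. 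This forces a two-step limiting procedure: first $R\to\infty$ at fixed $r$, then $r\to 0^+$. You instead invoke the \emph{inhomogeneous} Morrey estimate on a fixed unit ball $B_1(x)$, namely $\|\varphi\|_{L^\infty(B_1(x))}\le \mathcal{M}\,(\|\nabla\varphi\|_{L^p(B_1(x))}+\|\varphi\|_{L^p(B_1(x))})$, which makes \emph{both} terms into tail quantities supported in $\mathbb{R}^N\setminus B_R$ once $|x|\ge R+1$. Then a single pass $R\to\infty$ kills both simultaneously via $\varphi,\nabla\varphi\in L^p(\mathbb{R}^N)$. Your handling of the second assertion (attainment of the sup via continuity and Weierstrass on $\overline{B_R}$ once the tail is $<\|\varphi\|_{L^\infty}$) matches the paper's in spirit and is complete. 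The only thing worth making explicit, which you already flag, is that the $L^\infty$ bound on $\mathbb{R}^N\setminus B_{R+1}$ comes from $y\in B_1(y)$ for each $y$ with $|y|\ge R+1$, so the unit balls centered at those points both contain $y$ and sit inside $\mathbb{R}^N\setminus B_R$. No gaps.
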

\begin{proof}
We first observe that 
\[
W^{1,p}(\mathbb{R}^N)\hookrightarrow C^{0,1-\frac{N}{p}}(\mathbb{R}^N),
\]
by Morrey's inequalities. Thus, the second part of the statement will easily follow, once we will establish the claimed uniform decay property.  
\par
For every $x,y\in \mathbb{R}^N$, by the first Morrey inequality, we have
\[
|\varphi(x)|\le |\varphi(x)-\varphi(y)|+|\varphi(y)|\le C\,\|\nabla \varphi\|_{L^p(\mathbb{R}^N)}\,|x-y|^{1-\frac{N}{p}}+|\varphi(y)|.
\]
We integrate the previous estimate with respect to $y\in B_r(x)$, for an arbitrary radius $r>0$. This gives
\[
\omega_N\,r^N\,|\varphi(x)|\le C\,\|\nabla \varphi\|_{L^p(\mathbb{R}^N)}\,\int_{B_r(x)} |x-y|^{1-\frac{N}{p}}\,dy+\int_{B_r(x)} |\varphi(y)|\,dy.
\]
We divide by $\omega_N\,r^N$ and use a change of variable in the first integral. We get
\[
|\varphi(x)|\le \frac{C\,r^{1-\frac{N}{p}}}{\omega_N}\,\|\nabla \varphi\|_{L^p(\mathbb{R}^N)}\,\int_{B_1(0)} |z|^{1-\frac{N}{p}}\,dz+\fint_{B_r(x)} |\varphi(y)|\,dy.
\]
In particular, by using Jensen's inequality in the first term, we get
\begin{equation}
\label{decay}
|\varphi(x)|\le \frac{C\,r^{1-\frac{N}{p}}}{\omega_N}\,\|\nabla \varphi\|_{L^p(\mathbb{R}^N)}\,\int_{B_1(0)} |z|^{1-\frac{N}{p}}\,dz+\left(\fint_{B_r(x)} |\varphi(y)|^p\,dy\right)^{\frac 1 {p}}.
\end{equation}
We now take $0<r<R$ and observe that for every $|x|>R$ we have
\[
B_r(x)\subseteq \mathbb{R}^N\setminus B_{R-r}. 
\]
Thus, we obtain
\[
\lim_{R\to +\infty} \left(\sup_{|x|>R}\fint_{B_r(x)} |\varphi|^p\,dy\right)\le \lim_{R\to+\infty} \int_{\mathbb{R}^N\setminus B_{R-r}} |\varphi|^p\,dy=0,
\]
thanks to the fact that $\varphi\in L^p(\mathbb{R}^N)$. By using this result in \eqref{decay}, we get
\[
\lim_{R\to+\infty} \left(\sup_{|x|>R} |\varphi(x)|\right)\le \frac{C\,r^{1-\frac{N}{p}}}{\omega_N}\,\|\nabla \varphi\|_{L^p(\mathbb{R}^N)}\,\int_{B_1(0)} |z|^{1-\frac{N}{p}}\,dz.
\]
By arbitrariness of $r>0$, we get the desired conclusion.
\end{proof}
The next result is concerned with the Euler-Lagrange equation for $\lambda_{p,\infty}(\Omega)$.
\begin{lm}
\label{lm:maxpoint}
Let $p>N$ and let $\Omega\subseteq\mathbb{R}^N$ be an open set such that the following problem
\[
\lambda_{p,\infty}(\Omega)=\inf_{\varphi\in W^{1,p}_0(\Omega)} \left\{\int_\Omega |\nabla \varphi|^p\,dx\, :\, \|\varphi\|_{L^\infty(\Omega)}=1\right\},
\]
admits a solution $u\in W^{1,p}_0(\Omega)$. Then we have
\[
\int_\Omega \langle |\nabla u|^{p-2}\,\nabla u,\nabla \varphi\rangle\,dx=\lambda_{p,\infty}(\Omega)\,|u(x_0)|^{p-2}\,u(x_0)\,\varphi(x_0),\qquad \mbox{ for every } \varphi\in W^{1,p}_0(\Omega),
\]
where $x_0\in\Omega$ is such that 
\begin{equation}\label{maxpoint}
|u(x_0)|=\|u\|_{L^\infty(\Omega)}=1.
\end{equation}
Moreover, such a point is unique.
\end{lm}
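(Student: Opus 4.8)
The plan is to derive the Euler--Lagrange equation by a perturbation argument, using the normalization trick to bypass the non-smoothness of the $L^\infty$ constraint, together with a one-sided differentiation of the sup-norm along rays. First, since $p>N$, extending $u$ by zero puts it in $W^{1,p}(\mathbb R^N)\hookrightarrow C^{0,1-N/p}(\mathbb R^N)$, and Lemma \ref{lm:decay} guarantees that $u$ attains $\|u\|_{L^\infty(\Omega)}=1$; because $u$ vanishes outside $\Omega$, the nonempty compact set $M=\{x\in\overline\Omega:|u(x)|=1\}$ is contained in $\Omega$. Fix $\varphi\in C^\infty_0(\Omega)$. For $t$ near $0$ we have $u+t\varphi\not\equiv 0$, so $(u+t\varphi)/\|u+t\varphi\|_{L^\infty(\Omega)}$ is admissible for the minimization problem; comparing its Dirichlet energy with that of $u$ and using $\int_\Omega|\nabla u|^p\,dx=\lambda_{p,\infty}(\Omega)$ gives
\[
h(t):=\int_\Omega|\nabla(u+t\varphi)|^p\,dx-\lambda_{p,\infty}(\Omega)\,\|u+t\varphi\|_{L^\infty(\Omega)}^p\ \ge\ 0=h(0),
\]
so $t=0$ is a local minimum of $h$.

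Next I would exploit the splitting $h=a-b$, where $a(t)=\int_\Omega|\nabla(u+t\varphi)|^p\,dx$ is of class $C^1$ with $a'(0)=p\int_\Omega\langle|\nabla u|^{p-2}\nabla u,\nabla\varphi\rangle\,dx$ (differentiation under the integral being justified by H\"older's inequality), while $b(t)=\lambda_{p,\infty}(\Omega)\,\|u+t\varphi\|_{L^\infty(\Omega)}^p$ is convex, as the composition of the convex nondecreasing map $s\mapsto s^p$ on $[0,\infty)$ with the convex map $t\mapsto\|u+t\varphi\|_{L^\infty(\Omega)}$. Since $0$ minimizes $a-b$ with $a$ differentiable and $b$ convex, the local minimality forces $h'_+(0)\ge 0$ and $h'_-(0)\le 0$, hence $b'_+(0)\le a'(0)\le b'_-(0)\le b'_+(0)$, so all these quantities coincide: $b$ is differentiable at $0$ with $b'(0)=a'(0)$. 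As $s\mapsto s^{1/p}$ is smooth near $s=1$ and $\|u\|_{L^\infty(\Omega)}=1$, the map $t\mapsto\|u+t\varphi\|_{L^\infty(\Omega)}$ is itself differentiable at $0$. A Danskin-type (envelope) computation identifies its one-sided derivatives: since the relevant supremum is attained on the compact set $M$ and $\partial_t|u(x)+t\varphi(x)|\big|_{t=0}=\operatorname{sgn}(u(x))\,\varphi(x)$ for $x\in M$, the right derivative equals $\max_{x\in M}\operatorname{sgn}(u(x))\,\varphi(x)$ and the left derivative equals $\min_{x\in M}\operatorname{sgn}(u(x))\,\varphi(x)$. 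Differentiability then means these agree, i.e. $x\mapsto\operatorname{sgn}(u(x))\,\varphi(x)$ is constant on $M$.

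Since this holds for every $\varphi\in C^\infty_0(\Omega)$, the set $M$ must be a singleton: if $x_1\neq x_2$ both lay in $M\subseteq\Omega$, a bump function $\varphi\in C^\infty_0(\Omega)$ with $\varphi(x_1)=1$ and $\varphi(x_2)=0$ would give $\operatorname{sgn}(u(x_1))\neq 0=\operatorname{sgn}(u(x_2))\,\varphi(x_2)$, contradicting constancy on $M$. Thus $M=\{x_0\}$, which is the asserted uniqueness. With $M=\{x_0\}$ the derivative of $t\mapsto\|u+t\varphi\|_{L^\infty(\Omega)}$ at $0$ is $\operatorname{sgn}(u(x_0))\,\varphi(x_0)$, so by the chain rule $b'(0)=\lambda_{p,\infty}(\Omega)\,p\,\|u\|_{L^\infty(\Omega)}^{p-1}\operatorname{sgn}(u(x_0))\,\varphi(x_0)=\lambda_{p,\infty}(\Omega)\,p\,|u(x_0)|^{p-2}u(x_0)\,\varphi(x_0)$; equating with $a'(0)$ and dividing by $p$ yields the Euler--Lagrange identity for all $\varphi\in C^\infty_0(\Omega)$. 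Finally I would pass to general $\varphi\in W^{1,p}_0(\Omega)$ by density, both sides being continuous for the $W^{1,p}$-norm — the left side by H\"older's inequality, the right side since point evaluation is continuous on $W^{1,p}_0(\Omega)$ when $p>N$ by Morrey. The step I expect to require the most care is the rigorous one-sided differentiation of $t\mapsto\|u+t\varphi\|_{L^\infty(\Omega)}$ (the Danskin-type identity), for which the compactness of $M$ furnished by Lemma \ref{lm:decay} is the key ingredient.
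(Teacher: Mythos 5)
Your proof is correct, but it takes a genuinely different route from the paper's. The paper uses a slick "relaxation" trick: since $|\varphi(x_0)|\le\|\varphi\|_{L^\infty(\Omega)}$ for every $\varphi\in W^{1,p}_0(\Omega)$, the auxiliary functional
\[
\mathcal{F}(\varphi)=\frac1p\int_\Omega|\nabla\varphi|^p\,dx-\frac{\lambda_{p,\infty}(\Omega)}{p}\,|\varphi(x_0)|^p
\]
is nonnegative on all of $W^{1,p}_0(\Omega)$ and vanishes at $u$, so $u$ is a \emph{global} minimizer of the perfectly smooth functional $\mathcal{F}$ (no $L^\infty$-norm in sight), and its Euler--Lagrange equation is immediate; uniqueness is then obtained \emph{a posteriori} by comparing two copies of this identity with two different maximum points. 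You instead attack the nonsmooth $L^\infty$-constraint head on: you normalize and differentiate the gap $h=a-b$, use convexity of $b$ together with differentiability of $a$ and the local minimum at $t=0$ to force $b$ (hence $g(t)=\|u+t\varphi\|_{L^\infty}$) to be differentiable at $0$, and then invoke a Danskin/envelope identity on the compact argmax set $M$ to compute the one-sided derivatives; differentiability then forces $\operatorname{sgn}(u)\,\varphi$ to be constant on $M$ for every $\varphi$, which yields uniqueness \emph{before} the Euler--Lagrange equation, and the latter then drops out by the chain rule and density. Both routes are sound; the paper's argument is shorter and avoids nonsmooth analysis entirely (no need for one-sided derivatives or Danskin), while yours is a more systematic "first principles" derivation that would also apply if one did not spot the replacement of $\|\cdot\|_{L^\infty}$ by evaluation at $x_0$. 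One point worth making explicit if you write this up: the Danskin identity needs the supremum to be effectively over a fixed compact set, which follows from Lemma \ref{lm:decay} since $\{|u|\ge 1-\delta\}$ is compact and contained in $\Omega$ for $\delta>0$ small, so that for $|t|$ small the maximizers of $|u+t\varphi|$ all lie in a fixed compact neighborhood of $M$.
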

\begin{proof}
In the case of an open bounded set, the result is already contained in \cite[Theorem 2.5]{EP} and \cite[Lemma 2.1]{HL}. We will give here a slightly different argument, which permits to handle a more general class of open sets.
\par
We start by observing that the existence of a point $x_0\in \Omega$ satisfying \eqref{maxpoint} follows from Lemma \ref{lm:decay}. Also, by definition of $\lambda_{p,\infty}(\Omega)$, we have
\[
\frac{1}{p}\,\int_\Omega |\nabla \varphi|^p\,dx-\frac{\lambda_{p,\infty}(\Omega)}{p}\,\|\varphi\|^p_{L^\infty(\Omega)}\ge 0,\qquad \mbox{ for every } \varphi \in W^{1,p}_0(\Omega).
\]
In particular, by observing that $|\varphi(x_0)|\le \|\varphi\|_{L^\infty(\Omega)}$, we also have 
\[
\mathcal{F}(\varphi):=\frac{1}{p}\,\int_\Omega |\nabla \varphi|^p\,dx-\frac{\lambda_{p,\infty}(\Omega)}{p}\,|\varphi(x_0)|^p\ge 0,\qquad \mbox{ for every } \varphi \in W^{1,p}_0(\Omega).
\]
Moreover, we have $\mathcal{F}(u)=0$, thanks to the minimality of $u$ for $\lambda_{p,\infty}(\Omega)$ and to the property of $x_0$. In other words, the function $u$ is a minimizer for $\mathcal{F}$ over $W^{1,p}_0(\Omega)$. Thus, by computing the Euler-Lagrange equation for this functional, we end up with the claimed equation.
\par
Finally, let us suppose that there exists a second point $x_1\in \Omega$ such that 
\[
|u(x_1)|=\|u\|_{L^\infty(\Omega)}=1.
\]
Thus, the function $u$ must satisfy the same equation as before, with $x_1$ in place of $x_0$. In particular, if we take $\varphi\in W^{1,p}_0(\Omega)$ such that $\varphi(x_0)=0$ and $\varphi(x_1)\not=0$, we would get
\[
\lambda_{p,\infty}(\Omega)\,|u(x_1)|^{p-2}\,u(x_1)\,\varphi(x_1)=\int_\Omega \langle |\nabla u|^{p-2}\,\nabla u,\nabla \varphi\rangle\,dx=\lambda_{p,\infty}(\Omega)\,|u(x_0)|^{p-2}\,u(x_0)\,\varphi(x_0).
\]
Thanks to the choice of $\varphi$, this gives
\[
|u(x_1)|^{p-2}\,u(x_1)\,\varphi(x_1)=0 \qquad \mbox{ that is }\qquad u(x_1)=0,
\]
which would give a contradiction.
\end{proof}
The next result extends to general open sets, not necessarily bounded, the result of
\cite[Corollary 2.2]{HL}. 
Our proof uses a simpler argument. 
\begin{prop} 
\label{prop:francesca}
Let $p>N$ and let $\Omega\subseteq\mathbb{R}^N$ be an open set. For $\lambda>0$ and  $x_0\in \Omega$, let
$u\in W^{1,p}_0(\Omega)$ be a non-trivial weak solution of
\[
-\Delta_p u=\lambda\,|u(x_0)|^{p-2}\,u(x_0)\,\delta_{x_0},\qquad \text{in}\ \Omega.
\]
Then $u$ must have constant sign.
\end{prop}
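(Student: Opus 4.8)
The plan is to reduce the whole statement to inserting the admissible test function $u_{-}:=\max\{-u,0\}$ in the weak formulation. Before doing so, I would first dispose of a degenerate possibility: the constant $c:=\lambda\,|u(x_0)|^{p-2}\,u(x_0)$ on the right-hand side cannot vanish. Indeed, if $u(x_0)=0$, then the equation says that $u$ is weakly $p$-harmonic in $\Omega$, i.e. $\int_\Omega\langle|\nabla u|^{p-2}\nabla u,\nabla\varphi\rangle\,dx=0$ for every $\varphi\in W^{1,p}_0(\Omega)$; choosing $\varphi=u$ yields $\int_\Omega|\nabla u|^p\,dx=0$, and then the Gagliardo--Nirenberg inequality \eqref{GNS} (extended from $C^\infty_0(\Omega)$ to $W^{1,p}_0(\Omega)$ by density) forces $u\equiv0$, contradicting non-triviality. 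I would also note that, since $p>N$, every element of $W^{1,p}_0(\Omega)$ has a continuous representative --- its extension by zero lies in $W^{1,p}(\mathbb{R}^N)\hookrightarrow C^{0,1-N/p}(\mathbb{R}^N)$ by Morrey's inequality --- so that the pairing $\varphi\mapsto\varphi(x_0)$ and the pointwise values below are meaningful.

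Next, up to replacing $u$ by $-u$ --- which solves the very same equation, because both $-\Delta_p$ and $t\mapsto|t|^{p-2}t$ are odd --- I may assume $u(x_0)>0$, hence $c>0$. Then $u$ is weakly $p$-superharmonic, that is
\[
\int_\Omega\langle|\nabla u|^{p-2}\nabla u,\nabla\varphi\rangle\,dx=c\,\varphi(x_0)\ge0,\qquad\text{for every }\varphi\in W^{1,p}_0(\Omega)\ \text{with }\varphi\ge0.
\]
The key step is to take $\varphi=u_{-}$, which belongs to $W^{1,p}_0(\Omega)$ since $t\mapsto t_{-}$ is Lipschitz and vanishes at the origin, hence maps $W^{1,p}_0(\Omega)$ into itself. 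Using that $\nabla u_{-}=-\nabla u\,\mathbf{1}_{\{u<0\}}$ almost everywhere, the left-hand side equals $-\int_{\{u<0\}}|\nabla u|^p\,dx=-\int_\Omega|\nabla u_{-}|^p\,dx$, while the right-hand side equals $c\,u_{-}(x_0)\ge0$. Therefore $\int_\Omega|\nabla u_{-}|^p\,dx\le0$, so $\nabla u_{-}=0$ almost everywhere, and a further application of \eqref{GNS} gives $u_{-}\equiv0$, i.e. $u\ge0$ in $\Omega$. This shows that $u$ has constant sign, which is the claim.

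There is no genuine obstacle in this argument; the only points deserving (routine) care are the admissibility of $u_{-}$ as an element of $W^{1,p}_0(\Omega)$, which follows from the chain rule for Sobolev functions under Lipschitz maps vanishing at $0$, and the implication ``$\psi\in W^{1,p}_0(\Omega)$ and $\|\nabla\psi\|_{L^p(\Omega)}=0$ $\Rightarrow$ $\psi\equiv0$'', which follows from \eqref{GNS} (or, alternatively, by extending $\psi$ by zero to $\mathbb{R}^N$ and using $\psi\in L^p(\mathbb{R}^N)$). Compared with \cite[Corollary 2.2]{HL}, the point is that this proof uses nothing but the weak formulation tested against $u_{-}$, so no boundedness or regularity of $\Omega$ is needed.
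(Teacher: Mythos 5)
Your proof is correct and follows essentially the same approach as the paper: test the weak formulation against the negative part and use that it vanishes at $x_0$. The paper is marginally more streamlined --- rather than ruling out $u(x_0)=0$ by a separate contradiction argument, it assumes $u_+\not\equiv 0$ without loss of generality and tests directly with $u_+$, so that $0<\int_\Omega|\nabla u_+|^p\,dx=\lambda\,|u(x_0)|^{p-2}u(x_0)\,u_+(x_0)$ yields $u(x_0)=u_+(x_0)>0$ in one step --- but the substance is the same.
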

\begin{proof} 
Without loss of generality, we assume that $u^+:=\max\{u,0\}\not=0$. It is sufficient to show that $u=u^+$. By testing the weak formulation of the equation with $\varphi=u_+\in W^{1,p}_0(\Omega)$, we get  
\[
0<\int_\Omega |\nabla u_+|^{p}\,dx=\lambda\,|u(x_0)|^{p-2}\, u(x_0)\, u_+(x_0).
\]
This implies that $u(x_0)=u_+(x_0)>0$.
Hence, by testing the weak formulation with $\varphi=u_-$, we obtain this time
\[
\int_\Omega  |\nabla u_- |^{p}\,dx=\lambda\,|u(x_0)|^{p-2}\,u(x_0)\,u_-(x_0)=0.
\]
Since $u_-\in W^{1,p}_0(\Omega)$, the previous identity shows that $u_-=0$.
 \end{proof}

\end{document}